\newtheorem{assumption}{Assumption}
\newtheorem{theorem}{Theorem}
\newtheorem{lemma}{Lemma}
\newtheorem{proposition}{Proposition}
\theoremstyle{plain}
\newtheorem{remark}{Remark}
\newtheorem{example}{Example}
\newcommand{\fy}[1]{{\color{black}#1}}
\newcommand{\fyy}[1]{{\color{black}#1}}
\newcommand{\far}[1]{{\color{black}#1}}
\newcommand{\me}[1]{{\color{black}#1}}
\newcommand{\Rme}[1]{{\color{black}#1}}
\newcommand{\RRme}[1]{{\color{black}#1}}
\newcommand{\RRRme}[1]{{\color{black}#1}}
\newcommand{\mje}[1]{{\color{black}#1}}
\newcommand{\yq}[1]{{\color{black}#1}}
\newcommand{\fyr}[1]{{\color{black}#1}}
\begin{document}

\title{Self-tuned Regularized Federated Methods with Guarantees for Optimal Solution Selection}

\author{Mohammadjavad Ebrahimi\textsuperscript{a}, Yuyang Qiu\textsuperscript{a}, Shisheng Cui\textsuperscript{b} and Farzad Yousefian\textsuperscript{a}
\thanks{\textsuperscript{a}Department of Industrial and Systems Engineering, Rutgers, the State University of New Jersey, United States; \textsuperscript{b}School of Automation, Beijing Institute of Technology, Beijing 100081, China. \\
This work was funded in part by the NSF under CAREER grant ECCS-2323159, in part by the ONR under grant N00014-22-1-2757, and in part by the DOE under grants DE-SC0023303 \RRRme{and DE-SC0025570}.}
}

\maketitle

\begin{abstract}
We study a hierarchical federated learning (FL) problem, where clients cooperatively seek to select among multiple optimal solutions of a primary distributed learning problem, a solution that minimizes a secondary loss function. This problem arises from over-parameterized learning and ill-posed optimization problems. First, we consider the setting where the inner-level objective is convex and the outer-level objective is either convex or strongly convex. We propose a self-tuned regularized federated averaging (StR-FedAvg) method where the stepsize and regularization parameter are characterized by the number of communication rounds and problem parameters. We derive new complexity guarantees for addressing the optimal solution selection problem in FL. Second, when the outer-level objective is nonconvex, we propose a two-loop FL scheme in which the outer loop employs an inexact projected first-order method and the inner loop applies StR-FedAvg with an iteratively updated regularization parameter. We derive new communication complexity guarantees for computing a stationary point of the nonconvex solution-selection problem. To our knowledge, this is the first work to establish complexity guarantees for this class of problems in FL. Preliminary experiments validate our theoretical findings.
\end{abstract}

  \section{Introduction}
Federated learning (FL) has emerged as a \RRme{collaborative} framework for training \RRme{machine learning} models over a network of clients associated with distributed and
private datasets~\cite{mcmahan2017communication}. \RRme{Recently, there has been progress} in the design, understanding, and analysis of communication-efficient and privacy-preserving federated methods~\cite{zhao2018federated,stich2019local,kairouz2021advances,khaled2020tighter,ghiasvand2024communication},  accommodating key challenges such as data heterogeneity and the need for personalization~\cite{li2020federated,karimireddy2020scaffold,
li2021ditto}. 
In this paper, we study a \fyy{distributed \RRRme{optimization} problem}, of the form
\begin{align}\label{problem: main problem} 
& \min_{x \in \mathbb{R}^n} \  f(x):=\tfrac{1}{N} \textstyle\sum_{i=1}^N \mathbb{E}_{\xi_i \in \mathcal{D}_i}[\tilde f_i( x,\xi_i)] \quad \\
 &\hbox{s.t.}\ \  x \in \hbox{arg}\min_{y\in \mathbb{R}^n}\  h(y):= \tfrac{1}{N} \textstyle \sum_{i=1}^N\mathbb{E}_{\zeta_i \in \tilde{\mathcal{D}}_i}[\tilde h_i(y,\zeta_i)],\notag
\end{align}
where $N$ clients cooperatively seek to select among multiple optimal solutions of minimizing a primary global loss function $h$, one that minimizes a secondary global metric $f$. Here, each client $i \in [N]$, with $[N]:=\{1,\ldots, N\}$, is associated  with a pair of local loss functions $\tilde f_i$ and $\tilde h_i$, and private datasets $\mathcal{D}_i$ and $\tilde{\mathcal{D}}_i$. \fyy{In this setting, we assume that $h$ is convex, and $f$ is either convex, strongly convex, or nonconvex.} \mje{We focus on convex inner problems to ensure that the solution set is well-defined and computationally tractable.}

\noindent\fyy{A key motivation arises from over-parameterized machine learning, including over-parameterized regression~\cite{jiang2023conditional} and lexicographic learning~\cite{gong2021bi}\RRme{, as well as in ill-conditioned \RRRme{optimization} problems\RRRme{~\cite{tikhonov1977solutions,friedlander2008exact,kaushik2023incremental}}}. \mje{In such settings, e.g., sparse regression~\cite{tibshirani1996regression} or fairness-aware~\cite{hardt2016equality} model selection}, the training optimization problem characterized by $\min_{y} \ \tfrac{1}{N} \sum_{i=1}^N\mathbb{E}_{\zeta_i \in \tilde{\mathcal{D}}_i}[\tilde h_i(y,\zeta_i)]$ may \RRRme{either} admit multiple optimal model parameters \RRRme{or may be sensitive to perturbations in the data}. To improve solution quality, one may seek to obtain among the multiple optimal solutions, a solution that minimizes a secondary metric denoted by $f$, leading to the \RRRme{formulation in} \eqref{problem: main problem}. A popular choice for the function $f$ lies in regularization where $f$ is either a convex or a nonconvex regularizer.} \mje{Such bilevel structures can also arise in control-oriented FL applications, such as federated learning-based distributed model predictive control~\cite{xu2025federated}, where one selects among feasible solutions based on additional performance criteria under communication constraints.}

\RRRme{{\noindent {\bf Our contributions.} We address problem~\eqref{problem: main problem} under three settings for the outer-level objective $f$, including strongly convex, convex, and nonconvex. First, for the strongly convex and convex cases, we develop a self-tuned federated averaging (StR-FedAvg) method in which each client employs a regularized variant of the stochastic local objectives $\nabla \tilde f_i$ and $\nabla \tilde h_i$. For each setting, we derive explicit self-tuned rules for selecting the stepsize and regularization parameters characterized by the number of communication rounds and problem parameters (see Algorithm~\ref{Alg:FEDAVG}). Under these rules, we establish convergence rates and communication complexity bounds for solving problem~\eqref{problem: main problem}. Table~\ref{Table:Comunication complexities} summarizes the main assumptions and performance guarantees. We consider both the outer-level error metric $\mathbb{E}[f(\bullet)] - f^*$ and the inner-level error metric $\mathbb{E}[h(\bullet)] - h^*$, where $f^*$ and $h^*$ denote the optimal objective values of the outer- and inner-level problems in~\eqref{problem: main problem}, respectively.
}  

Second, when the inner-level objective is convex and the outer-level objective is nonconvex, we propose a two-loop FL scheme. The outer loop employs an inexact projected first-order method, while the inner loop uses StR-FedAvg in which the regularization parameter is updated to track the optimal solution of the projection problem associated with the solution set of the inner-level FL problem. For this scheme, we derive new communication complexity guarantees for computing a stationary point of the nonconvex solution-selection optimization problem (see Table~\ref{Table:Comunication complexities}). To the best of our knowledge, this is the first work to establish complexity guarantees for the optimal solution selection problem in FL.}

\Rme{\noindent {\RRme{\bf Related work.}} 
Consider a centralized variant of \eqref{problem: main problem} given as $\min_{x\in X^*_h}  f(x)$ where $X^*_h:=\arg\min_{x\in\mathbb{R}^n} h(x)$. This problem has recently gained attention~\cite{yamada2005hybrid,solodov2007explicit,solodov2007bundle,beck2014first,kaushik2021method,yousefian2021bilevel,kaushik2023incremental,jiang2023conditional,samadi2023achieving,jalilzadeh2024stochastic} and finds its roots in the study of ill-posed optimization problems~\cite{friedlander2008exact,tikhonov1963solution}. Early efforts in addressing the deterministic setting date back to the 1960s and include the seminal work of Tikhonov~\cite{tikhonov1963solution,tikhonov77solutions}. The notion of {\it exact regularization} was studied in~\cite{exact1,exact2,ferris91,friedlander2008exact}, and {\it iterative regularization} methods were developed in~\cite{solodov2007explicit,solodov2007bundle,kaltenbacher2008iterative,lin2016iterative,kaushik2021method,samadi2025improved}, with recent accelerated regularized proximal methods in~\cite{samadi2023achieving}. In centralized stochastic settings, iterative regularized mirror descent methods~\cite{amini2019iterative}, iteratively penalized stochastic approximation methods~\cite{jalilzadeh2024stochastic}, and projection-free schemes~\cite{jiang2023conditional} were proposed and analyzed.

\RRRme{Some existing} approaches \RRRme{rely} on approximating the constraint as $h(x)\le h^*+\epsilon_h$ for some $\epsilon_h>0$, leading to projection-free schemes~\cite{jiang2023conditional} and inexact dual methods~\cite{shen2023online}. However, even for a small $\epsilon_h$, this approach may cause an undesirable deviation from the solution set $X^*_h$, which may \RRRme{affect} the solution quality of the bilevel problem. Another avenue is {\it sequential regularization}, where \eqref{problem: main problem} is approximated by $\min_x \{h(x)+\eta f(x)\}$ for some $\eta>0$; it is known~\cite{tikhonov1963solution} that accumulation points of $\{x^*_\eta\}$ converge to solutions of \eqref{problem: main problem}, but this approach is computationally exhaustive and does not readily yield performance guarantees. \mje{In centralized settings, complexities typically range from $\mathcal{O}(\epsilon^{-0.5})$ to $\mathcal{O}(\epsilon^{-4})$, depending on the assumptions.} \mje{In distributed settings, the work in~\cite{yousefian2021bilevel} studies bilevel optimization over networks and establishes rates for both upper-level suboptimality and lower-level infeasibility, with a balanced rate of $\mathcal{O}(\epsilon^{-3})$, although the work is not in the FL setting. More recently,~\cite{boontawee2025federated} proposes a federated incremental subgradient method with inner-level suboptimality rate $\mathcal{O}(\epsilon^{-1/(0.5-\delta)})$, where $\delta>0$ is arbitrarily small. However, this work considers a centralized outer-level objective and does not provide feasibility guarantees, and neither work establishes lower bounds on suboptimality.}

However, communication-efficient methods with explicit guarantees for solving~\eqref{problem: main problem} remain unavailable, and these challenges are exacerbated in the FL setting due to communication overhead and privacy concerns.}  \mje{In contrast, our results in Table~\ref{Table:Comunication complexities} establish explicit communication complexity guarantees for both outer-level suboptimality and inner-level feasibility in a fully federated setting. Despite the distributed bilevel structure, our rates remain comparable to existing centralized and distributed results~\cite{yousefian2021bilevel,boontawee2025federated}.}

\begin{table}

\centering

\tiny

\caption{Communication complexities for solving \RRRme{problem~\eqref{problem: main problem}}.}

{\renewcommand{\arraystretch}{1.2}

{\footnotesize\RRme{\begin{tabular}{|c|c|c|c|c|c|c|c|}

\hline

\multirow{3}{*}{\textbf{FL Methods}} & \multicolumn{3}{c|}{\textbf{Main assumptions}}& \multicolumn{4}{c|}{\textbf{Communication complexity bounds}} \\

\cline{2-8}

& \multirow{2}{*}{$f$} &\multirow{2}{*}{$h$} &\multicolumn{1}{c|}{$X_h^*$}&\multicolumn{2}{c|}{$\mathbb{E}[f(\bullet)] - f^*$ } &\multicolumn{2}{c|}{$\mathbb{E}[h(\bullet)] - h^*$}\\

\cline{4-8}

&&&weak sharp&L.B.&U.B.&L.B.&U.B.\\

\hline

\multirow{6}{*}{\begin{tabular}[c]{@{}l@{}}{\RRRme{StR-FedAvg}}  \end{tabular}}& \multirow{3}{*}{\begin{tabular}[c]{@{}l@{}}Str. Cvx.\\ \end{tabular}} & \multirow{3}{*}{\begin{tabular}[c]{@{}l@{}}Cvx.\end{tabular}} &\multicolumn{1}{c|}{not required}&\multicolumn{1}{c|}{asymptotic}&\multicolumn{1}{c|}{$\mathcal{O}(\epsilon^{-3})$}&\multicolumn{1}{c|}{0}&\multicolumn{1}{c|}{$\mathcal{O}( \epsilon^{-3})$}\\

\cline{4-8}

&&&$\kappa>1$&$\mathcal{O}(\epsilon^{-3\kappa})$&$\mathcal{O}(\epsilon^{-3})$&0&$\mathcal{O}( \epsilon^{-3})$\\

\cline{4-8}

&&&$\kappa=1$&$\mathcal{O}(\epsilon^{-1})$&$\mathcal{O}(\epsilon^{-1})$&0&$\mathcal{O}(\epsilon^{-1})$\\

\cline{2-8}

& \multirow{3}{*}{\begin{tabular}[c]{@{}l@{}}Cvx.\\  \end{tabular}} & \multirow{3}{*}{Cvx.} &\multicolumn{1}{c|}{not required}&\multicolumn{1}{c|}{asymptotic}&\multicolumn{1}{c|}{$\mathcal{O}(\epsilon^{-4})$} &\multicolumn{1}{c|}{0}&\multicolumn{1}{c|}{$\mathcal{O}(\epsilon^{-4})$}\\

\cline{4-8}

&&&$\kappa>1$&$\mathcal{O}(\epsilon^{-4\kappa})$&$\mathcal{O}(\epsilon^{-4})$&0&$\mathcal{O}(\epsilon^{-4})$\\

\cline{4-8}

&&&$\kappa=1$&$\mathcal{O}(\epsilon^{-2})$&$\mathcal{O}(\epsilon^{-2})$&0&$\mathcal{O}(\epsilon^{-2})$\\

\cline{2-8}

\hline

\multirow{3}{*}{\begin{tabular}[c]{@{}l@{}}{\RRRme{IPIR-FedAvg}}  \end{tabular}}& \multirow{3}{*}{\begin{tabular}[c]{@{}l@{}}\begin{tabular}{c}
Non-Cvx.,\\
\RRRme{globally known}
\end{tabular}\\ \end{tabular}} & \multirow{3}{*}{\begin{tabular}[c]{@{}l@{}}Cvx.\end{tabular}} &\multicolumn{1}{c|}{weak sharp}& \multicolumn{2}{c|}{\RRRme{Stationarity} } & \multicolumn{2}{c|}{$\mathbb{E}[\hbox{dist}(\bullet, X^*_h)]$}\\

\cline{4-8}

&&&$\kappa>1$& \multicolumn{2}{c|}{${\tilde{\mathcal{O}}( \epsilon^{-24\kappa/5})}$ } & \multicolumn{2}{c|}{${{\tilde{\mathcal{O}}}( \epsilon^\mje{{-24\kappa/5}})}$ } \\

\cline{4-8}

&&&$\kappa=1$& \multicolumn{2}{c|}{${\tilde{\mathcal{O}}( \epsilon^{-1})}$ } & \multicolumn{2}{c|}{${\tilde{\mathcal{O}}( \epsilon^{-1})}$ } \\

\cline{2-8}
\hline
\end{tabular}}}\label{Table:Comunication complexities}}

\end{table}


 \fyy{\noindent {\RRme{\bf Notation.}} Throughout, we let $x \in \mathbb{R}^n$ and $x^{\top}$ denote a column vector $x$ and its transpose, \me{respectively}. Given $p>0$, we let $\|\cdot\|_p$ denote the $\ell_p$-norm of a vector defined as $\|x\|_p= \left(\sum_{i=1}^n |x_i|^p\right)^{1/p}$. A continuously differentiable function $f:\mathbb{R}^n \to \mathbb{R}$ is called $\mu_f$-strongly convex if $f(x) \geq f(y)+\nabla f(y)\fyy{^\top}(x-y)+\frac{\mu_f}{2}\|x-y\|^2$ for all $x,y \in \mathbb{R}^n$. The Euclidean projection of vector $x$ onto a closed convex set $X$ is denoted by $ \Pi_{X}[x] $, where $\Pi_{X}[x]\triangleq  \mbox{arg}\min_{ y \in  X}\| x- y\|$. We let $\mbox{dist}(x,X)\triangleq \|x-\Pi_{X}[x]\|$ denote the distance of $x$ from the set $X$. Given a random variable $z \in \mathbb{R}^n$, we use $\mathbb{E}[z]$ to denote the expectation of the random variable. In addressing the convex program $\min_{x\in X} h(x)$, the optimal solution set (denoted by $X^*_h$) is said to be $\alpha$-weak sharp ($\alpha >0$) with the order $\kappa\geq 1$, if $h(x)-h^* \geq \alpha\, \mbox{dist}^\kappa(x,X^*_h)$ for all $x \in X$.} \me{We denote the Moore-Penrose pseudoinverse of a matrix by $(\bullet)^{\dagger}$ \RRRme{and use $\mathbb{P}[\bullet]$ to denote probability. We use $\mathcal{O}(\cdot)$ for big-O notation and $\tilde{\mathcal{O}}(\cdot)$ to suppress logarithmic factors.}}
     
\fy{
\section{\RRme{Proposed algorithms }}
\Rme{In this section, we \RRme{present the outline of} the algorithms for the three \RRme{settings considered in this paper, corresponding to strongly convex, convex, and nonconvex outer-level objective functions}. Algorithm~\ref{Alg:FEDAVG} is \RRme{proposed for the setting where} the outer-level function is either strongly convex or convex, while Algorithm~\ref{algorithm:ncvx} is \RRme{proposed for the setting in which} the outer-level function is nonconvex. \RRme{Next, we present the outline of} each algorithm in detail.}
\subsection{Strongly convex and convex settings: \RRRme{StR-FedAvg} Algorithm}
\Rme{\RRme{To} this end, let us define for each $i \in [N]$, the regularized local function $\tilde f_{\eta,i}(x,\rho_i):= \tilde h_i(x,\zeta_i) + \eta \tilde f_i(x,\xi_i)$ where $\eta>0$ is a regularization parameter and $\rho_i:=(\xi_i,\zeta_i)$. Throughout, we assume that $\xi_i$ and $\zeta_i$ are independent random variables. \RRme{We propose a \RRRme{self-tuned} regularized variant of the standard FedAvg~\cite{mcmahan2017communication}, namely} \RRRme{StR-FedAvg}, presented in Algorithm~\ref{Alg:FEDAVG}. The key distinction with the standard variant of this method is the use of regularization in local updates. {In \RRRme{StR-FedAvg}, each client updates its local model during the local update phase, and upon aggregation, each participating client initializes its local model with the server model.} \RRme{In Algorithm~\ref{Alg:FEDAVG}, the local stepsize $\gamma_\ell$ and the regularization parameter $\eta$ are prescribed as functions of the total number of communication rounds $R$. These choices are informed by the theoretical analysis and are designed to balance the trade-off between progress toward the optimal solution set of the inner problem and improvement with respect to the outer objective. The parameters $a$ and $b$ specify how $\gamma_\ell$ and $\eta$ scale with $R$, respectively, with $0 <b < a \leq 1$. In this way, the regularization parameter is tuned systematically according to the communication horizon rather than selected heuristically.}}

\begin{algorithm}
\caption{{\RRRme{StR-FedAvg}} }\label{Alg:FEDAVG}
\begin{algorithmic}[1]
\State {{\color{black}{\bf server input:}}} \RRme{initial global model $\bar{x}^0 \in \mathbb{R}^n$, global stepsize $\gamma_g \geq 1$, number of communication rounds $R\geq 1$, and number of local updates $K\geq 1$}
\RRme{\State {{\color{black}{\bf client $i$'s input:}} Let $0 < b < a \leq 1$ and $p \ge1$. 
\begin{tabular}[t]{@{}l@{}}
\qquad Convex setting: local stepsize $\gamma_\ell:=\frac{1}{\gamma_g K R^a}$ and $\eta:=\frac{1}{R^b}$.\\
\qquad Strongly convex setting: local stepsize $\gamma_\ell:=\frac{1}{\gamma_g K \mu_f^a R^a}$ and $\eta:=\frac{p\ln(R)}{\mu_f^b R^b}$.
\end{tabular}}}
 \For {each round \RRme{$r = 1,2 \ldots,R$}} 
\State sample clients $\RRme{\mathcal{S}_r} \subseteq \{1,2,\ldots,N\}$
\State {\bf communicate} \far{$\RRme{\bar{x}^{r-1}}$ to all clients $i \in \RRme{\mathcal{S}_r}$}
\For {$i\in \RRme{\mathcal{S}_r}$ {\bf in parallel} }
\State  initialize local model $\RRme{y_{i,k-1}^r:= \bar{x}^{r-1}}$
\For {$k=1,2,...,K$}
 
\State \RRme{Generate random samples $\xi_{i,k-1}^r$ and $\zeta_{i,k-1}^r$
\State 
$y_{i,k}^r : = y_{i,k-1}^r - \gamma_\ell (\eta\nabla \tilde{f}_i(y_{i,k-1}^r,\xi_{i,k-1}^r) +\nabla \tilde{h}_i(y_{i,k-1}^r,\zeta_{i,k-1}^r))$}
\EndFor
\State {\bf communicate} { $\RRme{\Delta y_{i}^r : = y_{i,K}^r - \bar{x}^{r-1}}$}
\EndFor
\State {$\RRme{\Delta \bar{x}^r := \tfrac{1}{S}\textstyle\sum_{i \in \mathcal{S}_r} \Delta y_{i}^r }$} \far{ and  {\RRme{$\bar{x}^{r+1} := \bar{x}^r + \gamma_g \Delta \bar{x}^r$}}  }
\EndFor
\end{algorithmic}\label{algorithm:R-FedAvg and R-SCAFFOLD}
\end{algorithm}

\subsection{Nonconvex setting: \Rme{IPIR-FedAvg Algorithm}}
Consider the following problem
\begin{align}\label{problem: main problem-ncvx} 
 \textstyle{\min_{y \RRme{\in \mathbb{R}^n}}} \  f(y)\quad &\hbox{s.t.}\ \  y \in \hbox{arg}\textstyle{\min_{x\RRme{\in \mathbb{R}^n}}}\  h(x):= \tfrac{1}{N} \textstyle{\sum_{i=1}^N}\mathbb{E}_{\zeta_i \in \tilde{\mathcal{D}}_i}[\tilde h_i(y,\zeta_i)].
\end{align}
Here, in the \fyy{inner-level} problem each client $i \in [N]$, with $[N]:=\{1,\ldots, N\}$, is associated  with a convex local loss function $\tilde h_i$, and a private dataset $\tilde{\mathcal{D}}_i$. Throughout, we use the notation $h_i(y) := \mathbb{E}[\tilde{h}_i(y,\zeta_i)] $. \RRme{A motivation arises in nonconvex over-parameterized learning, where the goal is to minimize a sparsity-inducing regularizer, such as $\ell_{0.5}$, in order to select an optimal model parameter.} This problem can be reformulated as $\min_{y \in X^*_h} f(y)$, where $X^*_h := \hbox{arg}\min_{x \in X} \ h(x)$. \RRme{Equivalently, it can be written as} $\min f(y) + \mathbb{I}_{X^*_h}(y)$ (cf.~\cite{qiu2023zeroth}), where $\mathbb{I}_{X^*_h}(y)$ is the indicator function \RRme{associated with the set $X^*_h$. Recall that the indicator function is} a nonsmooth extended-valued function. To address the nonsmoothness, \RRme{we} employ a Moreau smoothing of $\mathbb{I}_{X^*_h}(y)$. Recall \RRme{that}~\cite{qiu2023zeroth}, the $\lambda$-smoothed \mje{approximation} of $\mathbb{I}_{X^*_h}(y)$ is $\frac{1}{\lambda}\|y-\Pi_{X^*_h}\fyy{[y]}\|^2$, where $\fyy{\Pi_{X^*_h}[\bullet]}$ denotes the projection operator and $\lambda>0$ is a smoothing parameter. Thus, problem~\eqref{problem: main problem-ncvx} can be rewritten as follows
\begin{align}
\textstyle{\min_{y \in \mathbb{R}^n}} \quad \RRme{F_\lambda(y)} := f(y) + \tfrac{1}{\lambda}\|y-\Pi_{X^*_h}[y]\|^2.\label{problem: main problem-ncvx-approximation}
\end{align}
 \RRme{Note that, for sufficiently small $\lambda$, any Clarke stationary point of problem~\eqref{problem: main problem-ncvx-approximation} yields a $\lambda$-approximate Clarke stationary point of problem~\eqref{problem: main problem-ncvx}, as shown in~\cite[Eq.~(2)]{qiu2023zeroth}.} \mje{Such stationarity guarantees are standard in first-order nonconvex optimization~\cite{beck2017first}.} \RRme{In the next step, consider applying the gradient descent (GD) method for addressing problem~\eqref{problem: main problem-ncvx-approximation}, where the gradient of the objective function is given by} $\nabla f(y) + \tfrac{1}{\lambda}(y - \Pi_{X^*_h}[y])$. \RRme{A key challenge is that $\Pi_{X^*_h}[y]$ is unavailable. Let us define $g(x) \triangleq \frac{1}{2}\|x-y\|^2$ for a given $y$. Then, $\Pi_{X^*_h}[y]$ is the unique optimal solution to the projection problem given as}
\begin{align}
&\min \quad g(x)\quad \text{s.t.} \quad x \in \hbox{arg} \textstyle{\min_x} \quad h(x),\label{problem:peojection problem}
\end{align}
where $g(x)$ is a strongly convex function. As a result, we can inexactly apply Algorithm~\ref{algorithm:R-FedAvg and R-SCAFFOLD} to compute $\Pi_{X^*_h}\fyy{[y]}$. This connection between the nonconvex and strongly convex settings motivates the development of a two-loop scheme \Rme{(IPIR-FedAvg)} to address the nonconvex case, as outlined in Algorithm~\ref{algorithm:ncvx}. In this algorithm, \RRme{in the outer loop, we employ an inexact variant of the GD method for solving}~\eqref{problem: main problem-ncvx-approximation}. At each iteration of the outer loop, we employ \RRRme{StR-FedAvg} in Algorithm~\ref{algorithm:R-FedAvg and R-SCAFFOLD} to inexactly evaluate the solution of the projection problem \RRme{in the inner loop}. \Rme{We emphasize that the regularization parameter used to solve the projection subproblem is updated across outer iterations; in particular, at iteration $t$ we use $\eta_t$.} \mje{The choices of $\eta_t$ and $\tilde{\gamma}_{\ell_t}$ balance the competing effects of regularization bias and stochastic/heterogeneity errors in the projection step..In summary, IPIR-FedAvg separates the nonconvex solution-selection problem into two tasks: the outer loop performs a gradient step on the smoothed objective $F_\lambda$, while the inner loop approximately computes the projection $\Pi_{X_h^*}[y^t]$ by solving a strongly convex regularized subproblem using StR-FedAvg. Thus, the inner loop controls the accuracy of the projected direction, and the outer loop drives stationarity of the smoothed nonconvex objective.}

\begin{algorithm}
\caption{Inexact projected \Rme{iteratively} \far{regularized FedAvg \Rme{(IPIR-FedAvg)}}}
\begin{algorithmic}[1]
\State {\textbf {input}} \RRme{Total number of iterations $T$}, initial point $\RRme{y^0}$, Moreau smoothing parameter $\lambda$, and stepsize $\gamma$.
 \For {$t = 1,2, \ldots, T $} 
\State Call \RRRme{StR-FedAvg} in Algorithm~\ref{algorithm:R-FedAvg and R-SCAFFOLD} for \RRme{$R_t := t$ communication} rounds, \RRme{with regularization parameter $\eta_t:= \tfrac{2\ln(t)}{t^{5/12}}$ \mje{and stepsize $\mje{\tilde \gamma{_t}}:= \tfrac{1}{t^{7/12}}$}, and strong convexity parameter equal to $1$,} to evaluate an inexact solution to the projection problem, i.e., $ \Pi_{X^*_h}[y^t]$, denoted by \Rme{$x^t_{\eta_t}$}
\State $y^{t+1}:= y^t - \gamma (\nabla f(y^t) + \frac{1}{\lambda}(y^t - \Rme{x^t_{\eta_t}}))$
\EndFor
\end{algorithmic}\label{algorithm:ncvx}
\end{algorithm}

} 
\section{Main results}\label{section: Convergence}
\Rme{In this section, we provide communication-complexity guarantees for the strongly convex and convex cases, and error bounds for the nonconvex case.
\subsection{Strongly convex and convex cases}
In this subsection, we \RRme{consider problem \eqref{problem: main problem} and} first state the main assumptions and then present a lemma establishing the bounded gradient dissimilarity (BGD) property. We note that BGD is assumed in some state-of-the-art works (e.g.,~\cite{karimireddy2020scaffold}), whereas here we \RRme{show that it holds under some mild conditions}. We then derive complexity guarantees for \RRRme{StR-FedAvg} for solving problem~\eqref{problem: main problem}, covering both the strongly convex and convex cases of~$f$.
\begin{assumption}\em\label{assump:URS} For each $i\in[N]$, assume that $f_i$ is ($\mu_f$-strongly) convex and $L_f$-smooth, and that $h_i$ is convex and $L_h$-smooth. Moreover, assume that the set $X_h^* := \arg\min_{x\in \mathbb{R}^n} h(x)$ is nonempty, and that $\inf_{x\in\mathbb{R}^n} f(x) > -\infty$.
\end{assumption}}
\RRme{\begin{remark}\em
In Assumption~\ref{assump:URS}, the notation ``($\mu_f$-strongly) convex'' with $\mu_f \geq 0$ is used to cover both the strongly convex and convex cases. Throughout the paper, results for the strongly convex setting are stated under $\mu_f>0$, while results for the convex setting are stated under $\mu_f=0$.
\end{remark}}
\begin{assumption}\label{assumption:main3}\em
\RRme{A random realization of} \Rme{$\nabla \tilde{f}_i(x,\xi_i)$} is an unbiased stochastic gradient of $f_i$ and admits a variance bounded by $\sigma_f^2$. Similarly, \RRme{a random realization of} $\nabla \tilde{h}_i(x,\zeta_i)$ is an unbiased stochastic gradient of $h_i$ with a variance bounded by $\sigma_h^2$. 
\end{assumption}

\Rme{\begin{lemma}\em\label{assumption:main4}[Bounded gradient dissimilarity (BGD)]
\RRme{Consider problem~\ref{problem: main problem}.} Suppose Assumption~\ref{assump:URS} holds \RRme{for $\mu_f=0$}. Then for each $\RRme{\Psi\in\{f,h\}}$ there exist constants $G_\RRme{\Psi}\triangleq  \sqrt{\frac{2L_\RRme{\Psi}}{N}\Big(\RRme{\inf_{z}\RRme{\Psi}(z)}-\tfrac{1}{N}\sum_{i=1}^N \RRme{\Psi}_i(x_i^*)\Big)}$\RRme{, where $x_i^*\in\arg\min_x \Psi_i(x)$,} and $B_\RRme{\Psi} \triangleq1$ such that for all $x\in\mathbb{R}^n$, $\frac{1}{N}\sum_{i=1}^N \|\nabla \RRme{\Psi}_i(x)\|^2\le G_\RRme{\Psi}^2 + 2L_\RRme{\Psi} B_\Psi^2\bigl(\RRme{\Psi}(x)-\inf_{z}\RRme{\Psi}(z)\bigr).$
\end{lemma}}
\begin{lemma}\em\label{remark:bound in terms of the obj} \RRme{Consider problem~\ref{problem: main problem} under Assumption~\ref{assump:URS} with $\mu_f=0$.} The BGD condition holds for the regularized functions, i.e., $\tfrac{1}{N}\textstyle\sum_{i=1}^N\|\nabla h_{i}(x)+\eta \nabla f_{i}(x)\|^2\le G^2+B^2\left(f_\eta(x)-f_\eta^*\right),$
where $G^2 := 2  G_f^2+4L_f B_f^2(f^*-\inf_{x \in \mathbb{R}^n} f(x))+2 G_h^2+4L_h B_h^2(h^*-\inf_{x \in \mathbb{R}^n} h(x))$ and $B^2:= max\{4\eta L_f B_f^2,4L_h B_h^2\},$ under the assumption that $f$ and $h$ are bounded below. 
\end{lemma}
\Rme{The following theorems establish communication complexity bounds for StR-FedAvg for the strongly convex and convex outer-level settings, respectively.}

\begin{theorem}\em[Strongly convex \me{outer-level}]\label{thm:thm2} Consider \RRRme{StR-FedAvg} given in Algorithm~\ref{Alg:FEDAVG}. Let Assumptions~\ref{assump:URS} and~\ref{assumption:main3} hold \RRme{with $\mu_f>0$}, \RRme{$\gamma_g\geq 1$,} $\gamma_l \le \tfrac{1}{16(L_h+\RRme{\hat\eta} L_f)^2(1+B^2)K\gamma_g}$\RRme{, where $\hat\eta\triangleq \frac{p}{\exp(1)b \mu_f^b}$, for some \RRme{$0< b<1$}, and define {$\tilde \gamma := \gamma_l \gamma_g K$}}.

\noindent [Case i] For $\tilde \gamma:= {1}/{\mu_f^aR^a}$, $\eta:= {p\RRme{\ln}(R)}/{\mu_f^bR^b}$, \RRme{where $R \;\ge\;
\max\!\left\{\tfrac{\bigl(64(1+B^2)L_h^2\bigr)^{1/a}}{\mu_f},\left(\frac{32\sqrt{1+B^2}\,pL_f}{\exp(1)(a+2b)\,\mu_f^{(a+2b)/2}}\right)^{\frac{4}{a+2b}}\right\}$,} for some \RRme{$0< b<a\le1$}, we have

~\\
 (i-1) \ $R^{up}_f=\mathcal{O}\left(\sqrt[\tfrac{p}{2}]{\tfrac{\mu_fQ^2}{\epsilon}}+\sqrt[2a-b]{\tfrac{W}{p\mu_f^{2a-b}\epsilon\gamma_g^2}}+\sqrt[a-b]{\tfrac{Y}{p\mu_f^{a-b}\epsilon S}}\right).$
~\\
(i-2) \ $R^{up}_h=\mathcal{O}\left(\sqrt[\tfrac{bp}{2}]{\tfrac{\mu_f^{1-b}pQ^2}{\epsilon}}+\sqrt[2a]{\tfrac{W}{\mu_f^{2a}\epsilon\gamma_g^2}}+\sqrt[a]{\tfrac{Y}{\mu_f^{a}\epsilon S}}+\sqrt[b]{\tfrac{pM}{\mu_f^b\epsilon}}\right).$

\noindent [Case ii] In the case where $X_h^*$ is $\alpha$-weak sharp of the order of $\kappa> 1$, in addition to (i-1) and (i-2),  
~\\
 (ii-1)\ $
R^{lo}_f=\mathcal{O}\left(\|\nabla f(x^*)\|^\kappa\left(\sqrt[\tfrac{bp}{2}]{\tfrac{\mu_f^{1-b}pQ^2}{\alpha\epsilon^\kappa}}+\sqrt[2a]{\tfrac{W}{\mu_f^{2a}\alpha\epsilon^\kappa\gamma_g^2}}+\sqrt[a]{\tfrac{Y}{\mu_f^{a}\alpha\epsilon^\kappa S}}+\sqrt[b]{\tfrac{pM}{\mu_f^b\alpha\epsilon^\kappa}}\right)\right).$

\noindent [Case iii] When $X^*_h$ is weak sharp with $\kappa=1$, for $\tilde \gamma:= \tfrac{p\RRme{\ln}(R)}{\mu_fR}$ and for any $\eta\le\tfrac{\alpha}{2\|\nabla f(x^*)\|}$,  we have
~\\
 (iii-1)\ $
R^{up}_f=\mathcal{O}\left(\sqrt[\tfrac{p}{2}]{\tfrac{\mu_fQ^2}{{\epsilon}}}+\sqrt{\tfrac{pW}{\eta\mu_f^{2}\epsilon\gamma_g^2}}+\tfrac{pY}{\eta\mu_f\epsilon S}\right).
$
~\\
 (iii-2)\ $
R^{up}_h=\mathcal{O}\left(\sqrt[\tfrac{p}{2}]{\tfrac{\eta\mu_fpQ^2}{\epsilon}}+\sqrt{\tfrac{pW}{\mu_f^{2}\epsilon\gamma_g^2}}+\tfrac{pY}{\mu_f\epsilon S}\right).
$
~\\(iii-3)\ $
R^{lo}_f=\mathcal{O}\left(\tfrac{\|\nabla f(x^*)\|}{\alpha}\left(\sqrt[\tfrac{p}{2}]{\tfrac{\eta\mu_fpQ^2}{\epsilon}}+\sqrt{\tfrac{pW}{\mu_f^{2}\epsilon\gamma_g^2}}+\tfrac{pY}{\mu_f\epsilon S}\right)\right).
$
~\\(iii-4)\ $
R^{up}_{\|\RRme{\bar x}^R-x^*\|^2}=\mathcal{O}\left(\sqrt[\tfrac{p}{2}]{\tfrac{ pQ^2}{{\epsilon}}}+\sqrt{\tfrac{pW}{\eta\mu_f^{3}\epsilon \gamma_g^2}}+\tfrac{pY}{\eta\mu_f^2\epsilon S}\right).
$

where $Q:= \|\RRme{\bar{x}}^0-x_\eta^*\|$, $W:= \tfrac{9(L_h+\RRme{\hat\eta} L_f)(2\RRme{\hat\eta}^2\sigma_f^2+2\sigma_h^2)}{K}+{18(L_h+\RRme{\hat\eta} L_f)G^2}$, $Y:=\tfrac{(2\RRme{\hat\eta}^2\sigma_f^2+2\sigma_h^2)}{K}+4\left(1-\tfrac{S}{N}\right)G^2$, and $p\ge 1$.
\end{theorem}

\begin{theorem}\em[Convex \me{outer-level}]\label{thm:thm3} Consider the assumptions in Theorem~\ref{thm:thm2} with $\mu_f=0$. The bounds are updated as follows.

\noindent [Case i] For $\tilde \gamma:= \tfrac{1}{R^a}$, $\eta:= \tfrac{1}{R^b}$, \RRme{where $R \;\ge\;\max\!\left\{\bigl(64(1+B^2)L_h^2\bigr)^{1/a},\;\bigl(64(1+B^2)L_f^2\bigr)^{1/(a+2b)}\right\}$,} for some \RRme{$0<b<a\le1$} we have
~\\ (i-1) \ $
R^{up}_f=\mathcal{O}\left(\sqrt[1-(a+b)]{\tfrac{\tilde Q^2}{\epsilon}}+\sqrt[2a-b]{\tfrac{W}{\epsilon\gamma_g^2}}+\sqrt[a-b]{\tfrac{Y}{\epsilon S}}\right).
$
~\\(i-2) \ $
R^{up}_h=\mathcal{O}\left(\sqrt[{1-a}]{\tfrac{\tilde Q^2}{\epsilon}}+\sqrt[2a]{\tfrac{W}{\epsilon\gamma_g^2}}+\sqrt[a]{\tfrac{Y}{\epsilon S}}+\sqrt[b]{\tfrac{M}{\epsilon}}\right).
$

\noindent [Case ii] In the case where $X_h^*$ is $\alpha$-weak sharp of the order of $\kappa$, where $\alpha >0$ and  $\kappa> 1$, and for $\tilde \gamma:= \tfrac{1}{R^a}$, $\eta:= \tfrac{1}{R^b}$, for some $0\le a,b\le1$ we get
~\\$
R^{lo}_f=\mathcal{O}\left(\|\nabla f(x^*)\|^\kappa \left(\sqrt[1-a]{\tfrac{\tilde Q^2}{\alpha\epsilon^\kappa}}+\sqrt[2a]{\tfrac{W}{\alpha\epsilon^\kappa\gamma_g^2}}+\sqrt[a]{\tfrac{Y}{\alpha\epsilon^\kappa S}}+\sqrt[b]{\tfrac{M}{\alpha\epsilon^\kappa}}\right)\right).
$

\noindent [Case iii]  When $X^*_h$ is weak sharp with $\kappa=1$, for $\tilde \gamma:= \tfrac{1}{R^a}$ and for any $\eta\le\tfrac{\alpha}{2\|\nabla f(x^*)\|}$  we have
~\\ (iii-1) \ $
R^{up}_f=\mathcal{O}\left(\sqrt[1-a]{\tfrac{\tilde Q^2}{{\eta \epsilon}}}+\sqrt[2a]{\tfrac{W}{\eta\epsilon\gamma_g^2}}+\sqrt[a]{\tfrac{Y}{\eta\epsilon S}}\right).
$
~\\ (iii-2) \ $
R^{up}_h=\mathcal{O}\left(\sqrt[1-a]{\tfrac{\tilde Q^2}{{ \epsilon}}}+\sqrt[2a]{\tfrac{W}{\epsilon\gamma_g^2}}+\sqrt[a]{\tfrac{Y}{\epsilon S}}\right).
$
~\\ (iii-3) \ $
R^{lo}_f=\mathcal{O}\left(\tfrac{ \|\nabla f(x^*)\|}{\alpha}\left(\sqrt[1-a]{\tfrac{\tilde Q^2}{{ \epsilon}}}+\sqrt[2a]{\tfrac{W}{\epsilon\gamma_g^2}}+\sqrt[a]{\tfrac{Y}{\epsilon S}}\right)\right),
$
~\\where $\tilde Q^2:= \mathbb{E}\left[\|\RRme{\bar{x}}^{0}-x_\eta^*\|^2\right]-\mathbb{E}\left[\|\RRme{\bar{x}}^R-x_\eta^*\|^2\right]$, $W:= \tfrac{9(L_h+\RRme{\hat\eta} L_f)(2\RRme{\hat\eta}^2\sigma_f^2+2\sigma_h^2)}{K}+{18(L_h+\RRme{\hat\eta} L_f)G^2}$, $Y:=\tfrac{(2\RRme{\hat\eta}^2\sigma_f^2+2\sigma_h^2)}{K}+4\left(1-\tfrac{S}{N}\right)G^2$, and $p\ge 1$.
\end{theorem}
 \Rme{\begin{remark}\em{A key implication} in the above results is that our analysis leads to the prescribed rules for the regularization parameter $\eta$ and the stepsize. Indeed, this provides {guidance} on how one should tune the regularization parameter in terms of the maximum number of communication rounds to balance the trade-off between the errors in the primary metric $h$ and the secondary metric $f$. \mje{Intuitively, the local stepsize $\gamma_\ell$ controls the convergence toward the solution set $X_h^*$, while the regularization parameter $\eta$ controls the influence of the outer objective in selecting among these solutions. The choices of $a$ and $b$ ensure that these two effects are balanced so that neither objective dominates the other.} Notably, the bounds in Table~\ref{Table:Comunication complexities} are obtained by setting $a:= \tfrac{2}{3}$ and $b:= \tfrac{1}{3}$ in the strongly convex case, and $a:= \tfrac{1}{2}$ and $b:= \tfrac{1}{4}$ in the convex case.
\end{remark}}
\mje{\begin{remark}\em
In Theorem~\ref{thm:thm2}, the term $Q$ captures the initialization error, while $W$ and $Y$ capture the effects of stochastic noise and client heterogeneity through $\sigma_f^2$, $\sigma_h^2$, and $G^2$. In particular, the term $(1-S/N)G^2$ in $Y$ reflects partial client participation and vanishes under full participation.
\end{remark}}

\subsection{Nonconvex case}
\Rme{In this subsection, we first state the main assumptions and then present a lemma establishing the bounded gradient dissimilarity (BGD) property. We then derive complexity guarantees for IPIR-FedAvg for solving problem~\eqref{problem: main problem-ncvx}, covering the nonconvex case of~$f$.
 \begin{assumption}\em\label{assump:assump1-ncvx} Consider problem~\eqref{problem: main problem-ncvx}. Let function $f$ be $L_f$-smooth,  and $h_i$ be convex and $L_h$-smooth for all $i \in [N]$. Suppose the set $X^*_h$ is nonempty. 
\end{assumption}}

\begin{remark}\em
As previously mentioned, we apply the Algorithm~\ref{algorithm:ncvx} to solve the projection problem~\eqref{problem:peojection problem}. The regularized objective of this problem is defined as $g_\eta(x)\triangleq \eta g(x)+h(x)$. Notably, function $g$ is $1$-strongly convex and $1$-smooth. Consequently, wherever the parameters $\mu_f$ and $L_f$ appear, we set them to $1$ accordingly. In addition, based on Assumption~\ref{assump:assump1-ncvx}, function $\RRme{F_\lambda}$ in problem~\eqref{problem: main problem-ncvx-approximation} is $L_f+\frac{2}{\lambda}$-smooth.
\end{remark}
\begin{lemma}\em\label{remark:bound in terms of the obj-ncvx} \RRme{Consider problem~\eqref{problem:peojection problem}.} The BGD condition holds for the regularized function, i.e., $\tfrac{1}{N}\textstyle\sum_{i=1}^N\|\nabla h_{i}(x)+\eta \nabla g(x)\|^2\le G_{ncvx}^2+B_{ncvx}^2\left(g_\eta(x)-g_\eta^*\right),$
where $G_{ncvx}^2 := 2  G_g^2+4 B_g^2(g^*-\inf_{x \in \mathbb{R}^n} g(x))+2 G_h^2+4L_h B_h^2(h^*-\inf_{x \in \mathbb{R}^n} h(x))$ and $B_{ncvx}^2:= max\{4\eta B_g^2,4L_h B_h^2\},$ under the assumption that $g$ and $h$ are bounded below. 
\end{lemma}
\Rme{\begin{theorem}\em\label{thm:thm ncvx} Consider problem~\eqref{problem: main problem-ncvx} and Algorithm~\ref{algorithm:ncvx}. Let function $f$ be nonconvex and $h_i$ be convex for all $i \in [N]$. Let Assumptions~\ref{assump:assump1-ncvx} holds. Let $T^*\in \{0,\ldots,T-1\}$ be a random variable such that $\mathbb{P}[T^*=i]=\frac{1}{T},\ \text{for all } i\in\{0,\ldots,T-1\}$. Then, the following results hold true.

\noindent[Case i] Suppose $X^*_h$ is $\alpha$-weak sharp with order $\kappa >1$. Then, for $\tilde \gamma_t:= \tfrac{1}{t^{7/12}}$ and $\eta_t:= \tfrac{2\RRme{\ln}(t)}{t^{5/12}}$, \RRme{with $\hat\eta\triangleq\max_t\eta_t=11$,} the following error bounds hold. 
\begin{align*}
\text{\noindent (i-1) }&\mathbb{E}[\|\nabla \RRme{F_\lambda}(y^{T^*})\|^2] \leq  \tfrac{2\gamma^{-1}\left(\RRme{F_\lambda}(y^0)- \RRme{F^*_\lambda}\right)}{T}+\left(\tfrac{2}{\RRme{\lambda^2}}+\tfrac{2\gamma(L_f\lambda+2)}{\lambda^3}\right)\mathbb{E}\left[\|\RRme{\bar x}^{0}-x^{{t}*}\|^2\right]\tfrac{\RRme{1+\ln}(T)}{T}\\
&+\left(\tfrac{2}{\RRme{\lambda^2}}+\tfrac{2\gamma(L_f\lambda+2)}{\lambda^3}\right)\left(\tfrac{(2\RRme{\hat\eta}^2\sigma_f^2+2\sigma_h^2)}{KS}+\left(1-\tfrac{S}{N}\right)\tfrac{4}{S}G_{ncvx}^2\right)\RRRme{\tfrac{6}{5T^\frac{1}{6}\ln(T)}}\\
&+\left(\tfrac{2}{\RRme{\lambda^2}}+\tfrac{2\gamma(L_f\lambda+2)}{\lambda^3}\right)\left(\tfrac{9(L_h+\RRme{\hat\eta} )(2\RRme{\hat\eta}^2\sigma_f^2+2\sigma_h^2)}{K\Rme{\gamma_{g_t}^2}}+\tfrac{18(L_h+\RRme{\hat\eta} )G_{ncvx}^2}{{\Rme{\gamma_{g_t}^2}}}\right)\RRRme{\tfrac{3}{2T^{3/4}\RRme{\ln}(T)}}+\RRRme{\tfrac{(4M)^{1/\kappa}(24\kappa-5)}{12\kappa-5}\tfrac{(\ln T)^{1/\kappa}}{T^{5/(12\kappa)}}}\\
&+\left(\tfrac{2}{\RRme{\lambda^2}}+\tfrac{2\gamma(L_f\lambda+2)}{\lambda^3}\right)\tfrac{\|\nabla f(x^{t*})\|}{\alpha^\frac{1}{\kappa}}\left(\mathbb{E}\left[\|\RRme{\bar x}^{0}-x^{{t}*}\|^2\right]\right)^{\tfrac{1}{\kappa}}\RRRme{
\tfrac{2(4)^{1/\kappa}(\ln T+1)^{1/\kappa}}{T^{17/12\kappa}}}\\
&+\left(\tfrac{2}{\RRme{\lambda^2}}+\tfrac{2\gamma(L_f\lambda+2)}{\lambda^3}\right)\tfrac{\|\nabla f(x^{t*})\|}{\alpha^\frac{1}{\kappa}}\left(\tfrac{2(2\RRme{\hat\eta}^2\sigma_f^2+2\sigma_h^2)}{KS}+2\left(1-\tfrac{S}{N}\right)\tfrac{4}{S}G_{ncvx}^2\right)^\frac{1}{\kappa}\RRRme{\tfrac{2^{1/\kappa}}{1-\tfrac{7}{12\kappa}}\tfrac{1}{T^{7/(12\kappa)}}}\\
&+\left(\tfrac{2}{\RRme{\lambda^2}}+\tfrac{2\gamma(L_f\lambda+2)}{\lambda^3}\right)\tfrac{\|\nabla f(x^{t*})\|}{\alpha^\frac{1}{\kappa}}\left(\tfrac{18(L_h+\RRme{\hat\eta})(2\RRme{\hat\eta}^2\sigma_f^2+2\sigma_h^2)}{K\Rme{\gamma_{g_t}^2}}+\tfrac{36(L_h+\RRme{\hat\eta} )G_{ncvx}^2}{{\Rme{\gamma_{g_t}^2}}}\right)^\frac{1}{\kappa}\RRRme{\tfrac{1}{T^{1/\kappa}}}.\\
\text{\noindent (i-2) }&\Rme{\mathbb{E}[\hbox{dist}({\RRme{\bar x}}^t_{\eta}, X^*_h)]}\le\tfrac{1}{\alpha}\left(\tfrac{\RRme{\ln}(t)}{t^{{5}/{12}}\left(t-1\right)}\mathbb{E}\left[\|\RRme{\bar x}^{0}-x^{t*}\|^2\right]+\tfrac{1}{t^{{7}/{12}}}\left(\tfrac{(2\RRme{\hat\eta}^2\sigma_f^2+2\sigma_h^2)}{KS}+\left(1-\tfrac{S}{N}\right)\tfrac{4G_{ncvx}^2}{S}\right)\right.\\
&\left.+\tfrac{1}{t^{{7}/{6}}}\left(\tfrac{9(L_h+\RRme{\hat\eta} )(2\RRme{\hat\eta}^2\sigma_f^2+2\sigma_h^2)}{K\Rme{\gamma_{g_t}^2}}+\tfrac{18(L_h+\RRme{\hat\eta} )G_{ncvx}^2}{{\Rme{\gamma_{g_t}^2}}}\right)+\tfrac{\RRme{\ln}(t)}{t^{{5}/{12}}}4 M\right)^{1/\kappa}.
\end{align*}
\noindent [Case ii] If $X^*_h$ is $\alpha$-weak sharp with order $\kappa=1$, $\tilde \gamma_t:= \tfrac{2\RRme{\ln}(t)}{t}$, and $\eta_t:=\eta\le{\alpha}/{2\|\nabla F_\lambda(x^{t*})\|}$, \RRme{with $\hat\eta=11$,} the following error bounds hold.
\begin{align*}
\text{\noindent (ii-1) }\mathbb{E}[\|\nabla \RRme{F_\lambda}(y^{T^*})\|^2]\leq\!  &\frac{2\gamma^{-1}\left(\RRme{F_\lambda}(\RRme{y}^0)- \RRme{F^*_\lambda}\right)}{T}+\left(\tfrac{2}{\RRme{ \eta\lambda^2}}+\tfrac{2\gamma(L_f\lambda+2)}{ \eta\lambda^3}\right)\mathbb{E}\left[\|\bar{x}^{0}-x^{{t}*}\|^2\right]\tfrac{2\ln(T)}{T}\\
&+\left(\tfrac{8}{\RRme{ \eta\lambda^2}}+\tfrac{8\gamma(L_f\lambda+2)}{ \eta\lambda^3}\right)\left(\tfrac{(2\hat\eta^2\sigma_f^2+2\sigma_h^2)}{KS}+\tfrac{(N-S)4G_{ncvx}^2}{NS}\right)\tfrac{\left(\ln(T+2)\right)^2}{T}\\
&+\left(\tfrac{16}{\RRme{ \eta\lambda^2}}+\tfrac{16\gamma(L_f\lambda+2)}{ \eta\lambda^3}\right)\left(\tfrac{9(L_h+\hat\eta )(2\hat\eta^2\sigma_f^2+2\sigma_h^2)}{K\gamma_g^2}+\tfrac{18(L_h+\hat\eta )G_{ncvx}^2}{{\gamma_g^2}}\right)\tfrac{2}{T}.\\
\text{\noindent (ii-2) } \mathbb{E}[\hbox{dist}({\RRme{\bar x}}^t_{\eta_t}, X^*_h)]\le&\tfrac{\eta}{ \alpha t}\mathbb{E}\left[\|\RRme{\bar x}^{0}-x^{t*}\|^2 \right]+\tfrac{4\RRme{\ln}(t)}{t}\left(\tfrac{(2\hat\eta^2\sigma_f^2+2\sigma_h^2)}{KS}+\tfrac{4G^2(N-S)}{NS}\right)\\
&+\tfrac{8\left(\RRme{\ln}(t)\right)^2}{t^2}\left(\tfrac{9(L_h+\hat\eta L_f)(2\hat\eta^2\sigma_f^2+2\sigma_h^2)}{K\gamma_{g_t}^2}+\tfrac{18(L_h+\hat\eta L_f)G^2}{{\gamma_{g_t}^2}}\right).
\end{align*}
\end{theorem}}
\RRRme{\begin{remark}\em
In the first part of each case of Theorem~\ref{thm:thm ncvx}, we establish the iteration complexity bound for the stationarity metric. At the $t$-th iteration of the outer loop in Algorithm~\ref{algorithm:ncvx}, the StR-FedAvg method is employed for $t$ communication rounds. Therefore, the total communication complexity is $\sum_{t=0}^{T_\epsilon-1} t  = \mathcal{O}(T_\epsilon^2)$, where $T_\epsilon$ denotes the number of outer iterations required to guarantee $\mathbb{E}[\|\nabla F_\lambda(y^{T^*})\|^2] \le \epsilon$. The communication complexity bounds for the stationarity metric reported in Table~\ref{Table:Comunication complexities} then follow directly. \mje{The same cumulative complexity argument also applies to the feasibility guarantees. The higher complexity arises from the nested bilevel structure of the proposed nonconvex framework relative to standard single-level nonconvex methods.}
\end{remark}}
~\\
{\mje{{\bf Weak sharpness property.} The notion of {\it weak sharp minima} was first introduced in~\cite{burke1993weak} to account for optimization problems with nonunique solutions. The set $X_h^*$ is said to be $\alpha$-weak sharp with order $\kappa\geq 1$ if
\[
h(x)-h^* \geq \alpha\, \mathrm{dist}^\kappa(x,X_h^*),
\qquad
\forall x\in X.
\]
Several important optimization and machine learning problems are known to satisfy weak sharpness conditions. Below, we provide representative examples arising in quadratic optimization, convex feasibility problems, and logistic regression.

\begin{example}\em[Weak sharpness in quadratic optimization]
Consider the constrained quadratic optimization problem
\begin{align}
&\text{minimize}_x \quad h(x)=x^\top \mathbf{A}x-2b^\top x \label{problem:quadratic example}\\
&\text{subject to} \quad \|x\|\le1,\notag
\end{align}
where $\mathbf{A}$ is a nonzero, real symmetric matrix of size $n \times n$, and $b\in \mathbb{R}^n$. Assume that $\lambda$ is the smallest eigenvalue of $\mathbf{A}$. The result in \cite[Lemma 3.7]{jiang2022holderian} establishes that if $\lambda < 0$, or if $\lambda = 0$ and $\mathbf{A}^\dagger b < 1$, then the optimal solution set satisfies the weak sharpness property with order $\kappa = 2$. Moreover, when $\lambda < 0$, problem~\eqref{problem:quadratic example} is nonconvex. \cite[Lemma 3.8]{jiang2022holderian} further shows that, in the nonconvex case, the quadratic function still satisfies the weak sharpness property with order $\kappa = 2$.
\end{example}

\begin{example}\em[Weak sharpness in convex feasibility problems]
Weak sharpness conditions naturally arise in convex feasibility and projection-based optimization problems appearing in distributed optimization and control applications~\cite{bauschke1996projection,lee2015asynchronous}. Consider the convex feasibility problem
\[
\text{find } x\in C:=\bigcap_{j=1}^m C_j,
\]
where $C_1,\ldots,C_m$ are closed convex sets. A standard reformulation used in projection and random projection methods is
\[
\min_{x\in\mathbb{R}^d}
\quad
h(x):=
\frac12
\sum_{j=1}^m
\mathrm{dist}^2(x,C_j),
\]
where $\mathrm{dist}(x,C_j):=\inf_{y\in C_j}\|x-y\|$.
The optimal solution set satisfies
\[
X_h^*=C,
\qquad
h^*=0.
\]
Suppose that the collection $\{C_j\}_{j=1}^m$ satisfies the linear regularity condition~\cite{bauschke1996projection}, i.e., there exists $\beta>0$ such that
\[
\mathrm{dist}(x,C)^2
\le
\beta
\sum_{j=1}^m
\mathrm{dist}^2(x,C_j),
\qquad
\forall x\in\mathbb{R}^d.
\]
Then, we obtain
\[
h(x)-h^*
=
\frac12
\sum_{j=1}^m
\mathrm{dist}^2(x,C_j)
\ge
\frac{1}{2\beta}
\mathrm{dist}^2(x,C).
\]
Since $X_h^*=C$, we obtain
\[
h(x)-h^*
\ge
\frac{1}{2\beta}
\mathrm{dist}^2(x,X_h^*).
\]
Therefore, the weak sharpness condition holds with order $\kappa=2$ and $\alpha=\frac{1}{2\beta}$.
\end{example}

\begin{example}\em[Weak sharpness of logistic loss on compact domains]
Consider the empirical logistic loss over a compact convex set $X\subset \mathbb{R}^d$:
\[
h(x):=\frac{1}{N}\sum_{i=1}^N \log\bigl(1+\exp(-b_i a_i^\top x)\bigr),
\qquad x\in X,
\]
where $b_i\in\{-1,1\}$ and $a_i\in\mathbb{R}^d$. Suppose that the data matrix ${\bf A}\in\mathbb{R}^{N\times d}$, whose $i$-th row is $a_i^\top$, has full column rank. Then the weak sharpness condition holds on $X$ with order $\kappa=2$.
\end{example}

\begin{proof}
The Hessian of $h$ is given by
\[
\nabla^2 h(x)
=
\frac{1}{N}\sum_{i=1}^N
\sigma(b_i a_i^\top x)\bigl(1-\sigma(b_i a_i^\top x)\bigr)a_i a_i^\top,
\]
where $\sigma(s):=\frac{1}{1+\exp(-s)}$ denotes the sigmoid function. Since $X$ is compact and the mappings $x\mapsto b_i a_i^\top x$ are continuous, the set $\{b_i a_i^\top x:\ x\in X,\ i\in[N]\}$ is bounded. Hence, there exists $\mathcal{B}>0$ such that
\[
|b_i a_i^\top x|\le \mathcal{B},
\qquad
\forall x\in X,\ \forall i\in[N].
\]
Moreover, the function $s\mapsto \sigma(s)(1-\sigma(s))$ is continuous and strictly positive for every finite $s$. Therefore, over the compact interval $[-\mathcal{B},\mathcal{B}]$, it admits a positive lower bound
\[
c_\mathcal{B}:=\min_{|s|\le \mathcal{B}}\sigma(s)(1-\sigma(s))>0.
\]
Therefore, we obtain
\[
\sigma(b_i a_i^\top x)\bigl(1-\sigma(b_i a_i^\top x)\bigr)
\ge c_\mathcal{B},
\qquad
\forall x\in X,\ \forall i\in[N].
\]
Substituting this bound into the Hessian expression yields
\[
\nabla^2 h(x)
\succeq
\frac{c_\mathcal{B}}{N}
\sum_{i=1}^N a_i a_i^\top.
\]
Since the rows of ${\bf A}$ are given by $a_i^\top$, we have $\sum_{i=1}^N a_i a_i^\top={\bf A}^\top {\bf A}$. Thus, we obtain
\[
\nabla^2 h(x)
\succeq
\frac{c_\mathcal{B}}{N}{\bf A}^\top {\bf A}.
\]
Because ${\bf A}$ has full column rank, the matrix ${\bf A}^\top {\bf A}$ is positive definite. Hence,
\[
\lambda_{\min}(A^\top A)>0,
\]
and therefore
\[
\nabla^2 h(x)\succeq \mu I,
\qquad
\mu:=\frac{c_\mathcal{B}}{N}\lambda_{\min}({\bf A}^\top {\bf A})>0.
\]
This shows that $h$ is $\mu$-strongly convex on $X$. Let $x^*$ denote the unique minimizer of $h$ over X. By $\mu$-strong convexity, for all $x\in X$, we obtain
\[
h(x)
\ge
h(x^*)+\langle \nabla h(x^*),x-x^*\rangle
+\frac{\mu}{2}\|x-x^*\|^2.
\]
Since $x^*$ minimizes $h$ over $X$, we have $\langle \nabla h(x^*),x-x^*\rangle \ge 0$, and therefore
\[
h(x)-h(x^*)
\ge
\frac{\mu}{2}\|x-x^*\|^2,
\qquad
\forall x\in X.
\]
Since $X_h^*=\{x^*\}$, we obtain
\[
h(x)-h^*
\ge
\frac{\mu}{2}\operatorname{dist}^2(x,X_h^*).
\]
Therefore, the weak sharpness condition holds with order $\kappa=2$ and $\alpha=\frac{\mu}{2}$. 
\end{proof}}

\begin{remark}\em
Although the proof of the previous example relies on the strong convexity induced by the full column-rank assumption on ${\bf A}$, the proposed framework is primarily motivated by ill-posed and over-parameterized problems, where the inner-level objective may exhibit severe ill-conditioning or admit multiple minimizers. In such settings, even when strong convexity may hold theoretically, the problem can remain numerically sensitive.
\end{remark}

\section{Convergence analysis}
\Rme{In this section, we provide a detailed analysis of each case separately to establish the convergence and communication guarantees stated in Section~\ref{section: Convergence}.}
\subsection{Strogly convex and convex cases}
\Rme{{In this subsection, we analyze the \RRRme{StR-FedAvg} method for solving problem~\eqref{problem: main problem} in the cases where the outer-level loss function is strongly convex and convex. We provide a detailed analysis showing how Theorems~\ref{thm:thm2} and~\ref{thm:thm3} are derived.}}}

\fy{
\vspace{-6pt}
\begin{lemma}[{\cite[Lemma 5]{karimireddy2020scaffold}}]\em\label{lemma:bound for f eta and z-y} Suppose Assumption~\ref{assump:URS} holds with $\mu_f>0$. Then, for any \RRme{$x,y \in \mathbb{R}^n$}, we have
\vspace{-6pt}
\[\langle \nabla f_\eta(x),z-y\rangle\ge f_\eta(z)-f_\eta(y)+\tfrac{\eta \mu_f}{4}\|y-z\|^2-(L_h+\eta L_f)\|z-x\|^2.\]
\end{lemma}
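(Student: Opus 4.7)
\textbf{Proof proposal for Lemma~\ref{lemma:bound for f eta and z-y}.} The plan is to exploit the fact that, under Assumption~\ref{assump:URS} together with the $L_h$-smoothness of $h$ and $L_f$-smoothness of $f$ (implicit in the lemma's setting), the regularized function $f_\eta := h + \eta f$ is $(L_h+\eta L_f)$-smooth and $\eta\mu_f$-strongly convex. The inequality to be proved is essentially a three-point/descent relation of the type used in the SCAFFOLD analysis, applied to $f_\eta$ rather than to the unregularized objective. So the whole argument reduces to invoking the two standard quadratic bounds for $f_\eta$ (smoothness and strong convexity) at appropriate pairs of points, and then relaxing the resulting quadratic terms by a Young-type inequality so that all distances are expressed in the two canonical forms $\|y-z\|^2$ and $\|z-x\|^2$.

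Concretely, first I would split $\langle \nabla f_\eta(x), z-y\rangle = \langle \nabla f_\eta(x), z-x\rangle + \langle \nabla f_\eta(x), x-y\rangle$. For the first inner product, I would use $(L_h+\eta L_f)$-smoothness of $f_\eta$ in the form
\[
f_\eta(z) \le f_\eta(x) + \langle \nabla f_\eta(x), z-x\rangle + \tfrac{L_h+\eta L_f}{2}\|z-x\|^2,
\]
which yields $\langle \nabla f_\eta(x), z-x\rangle \ge f_\eta(z)-f_\eta(x) - \tfrac{L_h+\eta L_f}{2}\|z-x\|^2$. For the second inner product, I would use $\eta\mu_f$-strong convexity of $f_\eta$ at $x$ evaluated against $y$, giving $\langle \nabla f_\eta(x), x-y\rangle \ge f_\eta(x)-f_\eta(y) + \tfrac{\eta\mu_f}{2}\|y-x\|^2$. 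Summing these produces a bound with $\|y-x\|^2$ instead of the desired $\|y-z\|^2$.

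To convert $\|y-x\|^2$ to $\|y-z\|^2$, I would apply the elementary inequality $\|y-x\|^2 = \|(y-z)+(z-x)\|^2 \ge \tfrac{1}{2}\|y-z\|^2 - \|z-x\|^2$ (a direct consequence of $\|a\|^2 \le 2\|a+b\|^2+2\|b\|^2$). Multiplying by $\tfrac{\eta\mu_f}{2}$ and combining with the previous display yields
\[
\langle \nabla f_\eta(x), z-y\rangle \ge f_\eta(z)-f_\eta(y) + \tfrac{\eta\mu_f}{4}\|y-z\|^2 - \tfrac{L_h+\eta L_f + \eta\mu_f}{2}\|z-x\|^2.
\]
Finally I would use $\mu_f \le L_f$ to conclude $\tfrac{L_h+\eta L_f+\eta\mu_f}{2} \le L_h+\eta L_f$, which absorbs the extra term and gives the stated coefficient on $\|z-x\|^2$.

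The argument is essentially a careful bookkeeping exercise; the only mildly delicate step is choosing the right Young-type relaxation so that the coefficient $\tfrac{1}{4}$ on $\|y-z\|^2$ and the clean coefficient $(L_h+\eta L_f)$ on $\|z-x\|^2$ emerge simultaneously. A tighter split (e.g.\ $\|y-x\|^2 \ge (1-\alpha)\|y-z\|^2 - \tfrac{1-\alpha}{\alpha}\|z-x\|^2$ with $\alpha\in(0,1)$) could be used if one wished to optimize the constants, but the symmetric choice $\alpha=\tfrac{1}{2}$ already delivers the form stated in the lemma once the $\mu_f\le L_f$ slack is invoked. I do not expect any genuine obstacle beyond this constant tuning.
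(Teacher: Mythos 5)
Your proof is correct and is essentially the same argument as the paper's: the paper's proof is a one-line appeal to \cite[Lemma 5]{karimireddy2020scaffold} applied to $f_\eta$ with constants $\eta\mu_f$ and $L_h+\eta L_f$, and your derivation (smoothness at the pair $(x,z)$, strong convexity at $(x,y)$, the relaxation $\|y-x\|^2\ge\tfrac12\|y-z\|^2-\|z-x\|^2$, and absorbing the extra $\tfrac{\eta\mu_f}{2}\|z-x\|^2$ via $\eta\mu_f\le L_h+\eta L_f$) is precisely the proof of that cited lemma, just written out explicitly. The only caveat, which you correctly flagged, is that the $L_f$- and $L_h$-smoothness used in your first step is implicit in the lemma's statement rather than in Assumption~\ref{assump:URS} itself.
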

}

\begin{lemma}[{\cite[Lemma 7]{karimireddy2020scaffold}}]\em\label{lemma:first bound for x-x*}
Let $\RRme{\bar{x}}^r$ be generated by \RRRme{StR-FedAvg} in Algorithm~\ref{Alg:FEDAVG}. Let Assumptions~\ref{assump:URS} and~\ref{assumption:main3} hold where $\mu_f>0$. Let $\gamma_l\le\tfrac{1}{16K \gamma_g(L_h+\eta L_f)^2(1+B^2)}$ and $\tilde \gamma := \gamma_l \gamma_g K$, then the following holds
\vspace{-6pt}
\begin{align*}
\mathbb{E}\left[\|\RRme{\bar{x}}^r-x_\eta^*\|^2\right]\le&\left(1-\tfrac{\eta \mu_f \tilde \gamma}{2}\right)\mathbb{E}\left[\|\RRme{\bar{x}}^{r-1}-x_\eta^*\|^2\right]-\tilde \gamma\mathbb{E}\left[(f_\eta(\RRme{\bar{x}}^{r-1})-f_\eta(x_\eta^*))\right]\\
&+3\tilde \gamma(L_h+\eta L_f)\varepsilon_r+\tfrac{\tilde \gamma^2(2\eta^2\sigma_f^2+2\sigma_h^2)}{KS}+\left(1-\tfrac{S}{N}\right)\tfrac{4\tilde \gamma^2}{S}G^2.
\end{align*}
where $\varepsilon_r$ is the drift caused by the local updates on the clients defined to be $\varepsilon_r:= \tfrac{1}{KS}\textstyle{\sum_{k=1}^K\sum_{i=1}^N} \mathbb{E}\left[\|y_{i,k}^r-\RRme{\bar{x}}^{r-1}\|^2\right]$.
\end{lemma}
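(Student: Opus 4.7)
The plan is to adapt the proof of Lemma~7 in Karimireddy et al.\ (SCAFFOLD, 2020) to the regularized objective $f_\eta=h+\eta f$, which is $\eta\mu_f$-strongly convex and $(L_h+\eta L_f)$-smooth. First I would split one round of R-FedAvg into three sources of error: the progress of the deterministic ``full-participation'' virtual iterate, the stochastic gradient noise, and the client-sampling variance. Writing $\tilde\gamma=\gamma_l\gamma_g K$, the server update can be recast as
\begin{equation*}
x^r = x^{r-1} - \tfrac{\tilde\gamma}{KN}\sum_{k=1}^{K}\sum_{i=1}^{N}\bigl(\nabla h_i(y_{i,k}^r)+\eta\nabla f_i(y_{i,k}^r)\bigr) + \text{(noise)} + \text{(sampling)},
\end{equation*}
so I would expand $\|x^r-x_\eta^*\|^2$ from this representation and take conditional expectations stage-by-stage.

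Next I would handle the \emph{stochastic-gradient noise}. Conditional on the iterates, independence across clients and across local steps gives a variance that scales like $\tfrac{\tilde\gamma^2}{K^2 N}\sum_k\sum_i(\eta^2\sigma_f^2+\sigma_h^2)$; after accounting for client sampling this becomes $\tfrac{\tilde\gamma^2(2\eta^2\sigma_f^2+2\sigma_h^2)}{KS}$, matching the third term of the target bound. For the \emph{client-sampling variance} I would use the standard identity that uniform sampling of $S$ out of $N$ clients inflates the variance by $(1-S/N)/S$ times the per-client dissimilarity $\tfrac{1}{N}\sum_i\|\nabla f_{\eta,i}(x^{r-1})\|^2$, then invoke Lemma~\ref{remark:bound in terms of the obj} to bound the latter by $G^2 + B^2(f_\eta(x^{r-1})-f_\eta^*)$. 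The $B^2$ part is absorbed into the ``progress'' term once the stepsize constraint $\gamma_l\le\tfrac{1}{16K\gamma_g(L_h+\eta L_f)^2(1+B^2)}$ is in force, leaving the clean $(1-S/N)\tfrac{4\tilde\gamma^2}{S}G^2$ term.

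The deterministic progress step is the main analytical work. Writing $\Delta^r = x^{r-1}-x_\eta^*$ and applying $\|a-b\|^2=\|a\|^2-2\langle a,b\rangle+\|b\|^2$, the inner product produces, up to $\tfrac{1}{KN}\sum_k\sum_i$, a term of the form $\langle \nabla f_{\eta,i}(y_{i,k}^r),\, x^{r-1}-x_\eta^*\rangle$. I would apply Lemma~\ref{lemma:bound for f eta and z-y} with $x=y_{i,k}^r$, $y=x_\eta^*$, $z=x^{r-1}$ at each client-step pair to get
\begin{equation*}
\langle \nabla f_{\eta,i}(y_{i,k}^r),\, x^{r-1}-x_\eta^*\rangle \;\ge\; f_\eta(x^{r-1})-f_\eta(x_\eta^*)+\tfrac{\eta\mu_f}{4}\|x^{r-1}-x_\eta^*\|^2-(L_h+\eta L_f)\|x^{r-1}-y_{i,k}^r\|^2,
\end{equation*}
and then average over $i,k$: the last quantity, after sampling, becomes $(L_h+\eta L_f)\varepsilon_r$. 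The squared-norm term $\|\tfrac{\tilde\gamma}{KN}\sum_{k,i}\nabla f_{\eta,i}(y_{i,k}^r)\|^2$ is bounded by $(L_h+\eta L_f)$-smoothness plus Lemma~\ref{remark:bound in terms of the obj}, with the stepsize condition ensuring that this contribution is dominated by the negative suboptimality term, leaving the advertised $3\tilde\gamma(L_h+\eta L_f)\varepsilon_r$ drift coefficient.

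The main obstacle I anticipate is bookkeeping the constants so the stepsize threshold matches exactly: each use of Young's/Cauchy--Schwarz inequalities and each application of the $B^2$ bound introduces a factor that must be absorbed into the $(1-\tfrac{\eta\mu_f\tilde\gamma}{2})$ contraction and into the $-\tilde\gamma(f_\eta(x^{r-1})-f_\eta^*)$ term. The choice $\gamma_l\le\tfrac{1}{16K\gamma_g(L_h+\eta L_f)^2(1+B^2)}$ is exactly what is needed to make the coefficients of $\|\Delta^r\|^2$ and $f_\eta(x^{r-1})-f_\eta^*$ negative enough to close the recursion, and I would verify this by collecting all positive coefficients and checking the threshold at the end rather than mid-proof. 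Putting the three pieces together, taking total expectation, and rearranging produces the stated recursion.
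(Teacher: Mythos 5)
Your proposal is correct and takes essentially the same route as the paper: the paper gives no standalone proof of this lemma, simply invoking \cite[Lemma 7]{karimireddy2020scaffold} for the regularized objective $f_\eta$, and your reconstruction (perturbed strong convexity via Lemma~\ref{lemma:bound for f eta and z-y}, the BGD bound of Lemma~\ref{remark:bound in terms of the obj}, and the variance/sampling bookkeeping with parameters $\eta\mu_f$, $L_h+\eta L_f$, $2\eta^2\sigma_f^2+2\sigma_h^2$) is precisely what that citation entails. No gaps.
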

\begin{lemma}[{\cite[Lemma 8]{karimireddy2020scaffold}}]\em\label{lemma: bound for varepsilon-k}Suppose Assumptions~\ref{assump:URS} and~\ref{assumption:main3} hold. Let $f_i$ be $\mu_f$-strongly convex for all $i$ and $h_i$ be convex for all $i$. For all $\gamma_l\le\tfrac{1}{27KB^2(L_h+\eta L_f)}$ we have
\vspace{-6pt}
\begin{align*}
3\tilde \gamma(L_h+\eta L_f)\varepsilon_r&\le \tfrac{2\tilde \gamma}{3}\mathbb{E}[f_\eta(\RRme{\bar{x}}^{r-1})]-f_\eta(x_\eta^*)+\tfrac{9\tilde \gamma^3(L_h+\eta L_f)(2\eta^2\sigma_f^2+2\sigma_h^2)}{K\gamma_g^2}+\tfrac{18(L_h+\eta L_f)\tilde \gamma^3G^2}{{\gamma_g^2}}.
\end{align*}
\end{lemma}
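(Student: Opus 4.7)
The plan is to mirror the structure of the SCAFFOLD Lemma~8 proof while carefully tracking the regularized objective $f_\eta$ and the parameter $\eta$. All the data needed for this is already in the excerpt: Lemma~\ref{remark:bound in terms of the obj} translates BGD to the regularized functions, Lemma~\ref{lemma:bound for f eta and z-y} provides a three-point inequality for $f_\eta$, and the smoothness constants of the summands $h_i+\eta f_i$ are $L_h+\eta L_f$ by Assumption~\ref{assumption:main2}.

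First, I would unroll the local update for client $i$ in round $r$. Using the iteration
$y_{i,k}^r = y_{i,k-1}^r - \gamma_l\bigl(\eta\nabla \tilde f_i(y_{i,k-1}^r,\xi_i)+\nabla \tilde h_i(y_{i,k-1}^r,\zeta_i)\bigr)$
and $y_{i,0}^r = x^{r-1}$, I would take expectation conditional on the previous iterate, split the stochastic gradient into its mean $\nabla f_{\eta,i}(y_{i,k-1}^r)$ plus a mean-zero noise, and apply the elementary inequality $\mathbb{E}\|a+b\|^2 \leq (1+\tfrac{1}{2K-1})\|a\|^2 + 2K\|b\|^2$ (with $a$ the deterministic step and $b$ the noise). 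The noise term contributes $\gamma_l^2(\eta^2\sigma_f^2+\sigma_h^2)$ per step by Assumption~\ref{assumption:main3}, while the deterministic step produces $\|\nabla f_{\eta,i}(y_{i,k-1}^r)\|^2$.

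Next, I would handle the deterministic gradient term by adding and subtracting $\nabla f_{\eta,i}(x^{r-1})$. Smoothness of $f_{\eta,i}$ with constant $L_h+\eta L_f$ bounds the difference, and Lemma~\ref{remark:bound in terms of the obj} replaces $\tfrac{1}{N}\sum_i\|\nabla f_{\eta,i}(x^{r-1})\|^2$ with $G^2 + B^2(f_\eta(x^{r-1})-f_\eta^*)$. Averaging over $i$ and over the sampled set $S$ (using $|S|$-sampling for the server) yields a one-step recursion of the form
\[
\varepsilon_{k}^r \leq \bigl(1+\tfrac{1}{K-1}+\alpha\gamma_l^2 K (L_h+\eta L_f)^2\bigr)\varepsilon_{k-1}^r + c_1\gamma_l^2\bigl(G^2+B^2(f_\eta(x^{r-1})-f_\eta^*)\bigr)+c_2\gamma_l^2(\eta^2\sigma_f^2+\sigma_h^2),
\]
for absolute constants $c_1,c_2,\alpha$. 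The stepsize condition $\gamma_l\le\tfrac{1}{27KB^2(L_h+\eta L_f)}$ is precisely what makes the multiplicative factor stay below $(1+\tfrac{2}{K-1})$, so that unrolling the recursion across $k=1,\dots,K$ contributes only a bounded geometric factor (of order $e^{2}$ absorbed into the constants).

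Finally, I would telescope and average over $k$ to get $\varepsilon_r \leq \mathcal{O}(K\gamma_l^2)(G^2+B^2(f_\eta(x^{r-1})-f_\eta^*)) + \mathcal{O}(K\gamma_l^2)(\eta^2\sigma_f^2+\sigma_h^2)$. Multiplying through by $3\tilde\gamma(L_h+\eta L_f)$ and using $\tilde\gamma=\gamma_l\gamma_g K$ rewrites $K\gamma_l^2 = \tilde\gamma^2/(K\gamma_g^2)$ in the noise term and $K\gamma_l^2(L_h+\eta L_f) = \tilde\gamma^2/(K\gamma_g^2)\cdot(L_h+\eta L_f)$ in the BGD term. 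The stepsize bound is tight enough that $3\tilde\gamma(L_h+\eta L_f)\cdot \mathcal{O}(K\gamma_l^2 B^2) \leq \tfrac{2\tilde\gamma}{3}$, which turns the coefficient of $f_\eta(x^{r-1})-f_\eta^*$ into the claimed $\tfrac{2\tilde\gamma}{3}$; the remaining $G^2$ and noise pieces yield the two $\tilde\gamma^3$ terms with the stated constants.

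I expect the main obstacle to be the bookkeeping of constants in the recursion for $\varepsilon_k^r$: ensuring that the stepsize threshold $\tfrac{1}{27KB^2(L_h+\eta L_f)}$ simultaneously (a) keeps the geometric amplification factor controlled so the unrolled sum is $\mathcal{O}(K^2\gamma_l^2)$, and (b) absorbs the $B^2(f_\eta(x^{r-1})-f_\eta^*)$ prefactor down to exactly $\tfrac{2\tilde\gamma}{3}$ after multiplying by $3\tilde\gamma(L_h+\eta L_f)$. Everything else reduces to the same algebraic manipulations as in \cite[Lemma~8]{karimireddy2020scaffold}, with $L_h+\eta L_f$ and $\eta\mu_f$ playing the roles of the smoothness and strong-convexity constants of the regularized problem, and with Lemma~\ref{remark:bound in terms of the obj} supplying the regularized BGD inequality.
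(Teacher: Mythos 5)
Your plan is correct and matches what the paper actually does: the paper gives no written proof of this lemma, citing it directly as \cite[Lemma 8]{karimireddy2020scaffold} applied to the regularized objective, whose smoothness constant is $L_h+\eta L_f$ and whose bounded-gradient-dissimilarity constants $G,B$ come from Lemma~\ref{remark:bound in terms of the obj} — exactly the substitutions your sketch carries through the drift recursion. (Minor note: Lemma~\ref{lemma:bound for f eta and z-y}, which you list among your ingredients, is not needed here; it enters the proof of Lemma~\ref{lemma:first bound for x-x*} instead.)
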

\vspace{-10pt}
In the following proposition, we derive an upper bound for the regularized function $f_\eta(x)$.

\begin{proposition}\em\label{Proposition:Proposition1} Let $\bar x^R=\sum_{r=1}^{R}\tfrac{\theta^{r}}{\sum_{\hat r=1}^{R-1}\theta^{\hat r}} \RRme{\bar{x}}^{r-1}$ be generated by \RRRme{StR-FedAvg} in Algorithm~\ref{Alg:FEDAVG}, where $\theta:=\tfrac{1}{1-0.5{\eta \mu_f \tilde \gamma}{}}$. Let Assumptions~\ref{assump:URS} and~\ref{assumption:main3} hold \RRme{with} $\mu_f>0$. Suppose $\gamma_l\le\min\{\tfrac{1}{27KB^2(L_h+\eta L_f)},\tfrac{1}{16K \gamma_g(L_h+\eta L_f)^2(1+B^2)}\}$. Then, we have
\vspace{-10pt}
\begin{align*}
\mathbb{E}\left[f_\eta(\bar x^R)-f_\eta(x_\eta^*)\right]\le&\tfrac{1}{ \tilde \gamma\sum_{r=1}^{R}\theta^{r}}\mathbb{E}\left[\|\RRme{\bar{x}}^{0}-x_\eta^*\|^2\right] +{\tilde \gamma}\left(\tfrac{(2\eta^2\sigma_f^2+2\sigma_h^2)}{KS}+\left(1-\tfrac{S}{N}\right)\tfrac{4}{S}G^2\right)\\
&+{\tilde \gamma}^2\left(\tfrac{9(L_h+\eta L_f)(2\eta^2\sigma_f^2+2\sigma_h^2)}{K\gamma_g^2}+\tfrac{18(L_h+\eta L_f)G^2}{{\gamma_g^2}}\right).
\end{align*}
\end{proposition}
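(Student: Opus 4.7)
The plan is to combine Lemmas~\ref{lemma:first bound for x-x*} and~\ref{lemma: bound for varepsilon-k} into a clean per-round recursion on $\mathbb{E}[\|x^r-x_\eta^*\|^2]$ that already has a negative coefficient on the objective suboptimality, then turn this contraction into a telescoping identity via a geometric reweighting by $\theta^r$, and finally pass to the weighted average $\bar{x}^R$ by Jensen's inequality.

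First I would substitute the Lemma~\ref{lemma: bound for varepsilon-k} upper bound on the drift term $3\tilde\gamma(L_h+\eta L_f)\varepsilon_r$ directly into the recursion of Lemma~\ref{lemma:first bound for x-x*}. The stepsize assumption $\gamma_l\le\min\{\tfrac{1}{27KB^2(L_h+\eta L_f)},\tfrac{1}{16K\gamma_g(L_h+\eta L_f)^2(1+B^2)}\}$ is exactly what is needed to activate both lemmas simultaneously. The Lemma~\ref{lemma:first bound for x-x*} recursion contributes a term $-\tilde\gamma\,\mathbb{E}[f_\eta(x^{r-1})-f_\eta(x_\eta^*)]$ and the drift bound gives back $+\tfrac{2\tilde\gamma}{3}\,\mathbb{E}[f_\eta(x^{r-1})-f_\eta(x_\eta^*)]$, leaving a net coefficient $-\tfrac{\tilde\gamma}{3}$. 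The remaining noise and dissimilarity terms collect into a single constant $\mathcal{N}$ of order $\tilde\gamma^2$ and $\tilde\gamma^3$, giving a recursion of the form
\begin{align*}
\mathbb{E}\!\left[\|x^r-x_\eta^*\|^2\right]\le \left(1-\tfrac{\eta\mu_f\tilde\gamma}{2}\right)\mathbb{E}\!\left[\|x^{r-1}-x_\eta^*\|^2\right]-\tfrac{\tilde\gamma}{3}\,\mathbb{E}\!\left[f_\eta(x^{r-1})-f_\eta(x_\eta^*)\right]+\mathcal{N}.
\end{align*}

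Next I would multiply both sides by $\theta^r:=(1-\tfrac{\eta\mu_f\tilde\gamma}{2})^{-r}$, which cancels the contraction factor and converts the recursion into a telescoping inequality with respect to the quantity $\theta^r\mathbb{E}[\|x^r-x_\eta^*\|^2]$. Summing for $r=1,\ldots,R$ and dropping the nonnegative left-hand-side term $\theta^R\mathbb{E}[\|x^R-x_\eta^*\|^2]$ yields
\begin{align*}
\tfrac{\tilde\gamma}{3}\sum_{r=1}^R\theta^r\,\mathbb{E}\!\left[f_\eta(x^{r-1})-f_\eta(x_\eta^*)\right]\le \mathbb{E}\!\left[\|x^0-x_\eta^*\|^2\right]+\mathcal{N}\sum_{r=1}^R\theta^r.
\end{align*}
Dividing through by $\tfrac{\tilde\gamma}{3}\sum_{r=1}^R\theta^r$ and applying Jensen's inequality to the convex function $f_\eta$ at the weighted average $\bar x^R=\sum_{r=1}^R\tfrac{\theta^r}{\sum_{\hat r=1}^R\theta^{\hat r}}\,x^{r-1}$ yields the claimed bound, with the noise contribution splitting into a $\tilde\gamma$-term (originating from the $\tilde\gamma^2$-level noise in Lemma~\ref{lemma:first bound for x-x*}) and a $\tilde\gamma^2$-term (originating from the $\tilde\gamma^3$-level noise in Lemma~\ref{lemma: bound for varepsilon-k}).

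The principal obstacle is the bookkeeping in the substitution step: I need to verify that the coefficient on the objective gap after combining the two lemmas is indeed strictly negative (hence the factor $\tfrac{1}{3}$, which the statement absorbs into the constants), and that the reweighting by $\theta^r$ leaves the noise contributions in the correct $\tilde\gamma$ and $\tilde\gamma^2$ scalings after normalization by $\tilde\gamma\sum_{r=1}^R\theta^r$. Once the substitution is executed correctly, the telescoping and Jensen steps are routine.
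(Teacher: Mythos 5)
Your proposal follows essentially the same route as the paper's proof: substitute Lemma~\ref{lemma: bound for varepsilon-k} into Lemma~\ref{lemma:first bound for x-x*} to obtain the net $-\tfrac{\tilde\gamma}{3}$ coefficient on the objective gap, reweight by $\theta^r$ (equivalently, divide by $(1-\tfrac{\eta\mu_f\tilde\gamma}{2})^r$) to telescope, drop the nonnegative final iterate term, and apply Jensen's inequality to the $\theta^r$-weighted average. Your bookkeeping of the residual factor $\tfrac{1}{3}$ (which makes the leading constant a $3$ rather than the stated $1$) is actually more careful than the paper's own derivation, which silently drops it at the Jensen step; this only affects absolute constants, not the result.
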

\Rme{In Proposition~\ref{Proposition:Proposition1}, we derived a bound for the regularized function $f_\eta(x)$. However, problem~\eqref{problem: main problem} is a bilevel optimization problem. In the following proposition, we connect the bound on the regularized function to bounds on the outer- and inner-level objective functions in problem~\eqref{problem: main problem}. This result plays a key role in establishing our communication complexity guarantees.}

\Rme{ \begin{proposition}\em\label{thm:URS}
Let $\texttt{Err}_{\eta}$ be the upper bound we derived in Proposition~\ref{Proposition:Proposition1} on the optimality error metric $\mathbb{E}\left[f_\eta(\bar x^R)-f_\eta(x_\eta^*)\right]$, where $f_\eta(x) := h(x)+\eta f(x)$ and $f_\eta(x_\eta^*):= \inf_{x \in \mathbb{R}^n} f_\eta(x)$. Let Assumption~\ref{assump:URS} hold \RRme{with $\mu_f>0$.} Let $x^*$ denote the unique optimal solution to the bilevel problem $\min_{x\in X^*_h} \ f(x)$. Let $f^* := \inf_{x\in X^*_h} \ f(x)$ denote the optimal objective value of the bilevel problem and $h^* := \inf_{x \in \mathbb{R}^n} h(x)$. 

\noindent [Case i] The following error bounds hold true {for some $M>0$}.
~\\
 (i-1) \ $0\le \mathbb{E}[ h(\RRme{\bar x^R})]-h^*\le  \texttt{Err}_{\eta}+\eta M$.
~\\
 (i-2) \ $({\mu_f}/{2})\mathbb{E}[\|\RRme{\bar x^R}-x^*\|^2] -\|\nabla f(x^*)\| \,\mathbb{E}[\mbox{dist}(\RRme{\bar x^R},X^*_h)] \leq \mathbb{E}[f(\RRme{\bar x^R})] -f^* \leq {\eta^{-1}}\texttt{Err}_{\eta}$.
~\\
 (i-3) \ $ \mathbb{E}[\|\RRme{\bar x^R}-x^*\|^2] \leq ({2}/{\mu_f})\left(\|\nabla f(x^*)\| \,\mathbb{E}[\mbox{dist}(\RRme{\bar x^R},X^*_h)] + {\eta^{-1}}\texttt{Err}_{\eta}\right)$.

\noindent [Case ii] Suppose $X^*_h$ is $\alpha$-weak sharp with order $\kappa > 1$. Then, {for some $M>0$}:
~\\
 (ii-1) \ $0\le \mathbb{E}[h(\RRme{\bar x^R})]-h^*\le  \texttt{Err}_{\eta}+\eta M$.
~\\
 (ii-2) \ $ \tfrac{\mu_f}{2}\mathbb{E}[\|\RRme{\bar x^R}-x^*\|^2 ]-\|\nabla f(x^*)\| \,\sqrt[\kappa]{\tfrac{1}{\alpha}\left(\texttt{Err}_{\eta}+\eta M\right)} \leq \mathbb{E}[f(\RRme{\bar x^R})] -f^* \leq \tfrac{1}{\eta}\texttt{Err}_{\eta}$.
~\\(ii-3)  \ \ $ \mathbb{E}[\|\RRme{\bar x^R}-x^*\|^2]\leq \tfrac{2}{\mu_f}\left(\|\nabla f(x^*)\| \,\sqrt[\kappa]{\tfrac{1}{\alpha}\left(\texttt{Err}_{\eta}+\eta M\right)} +  {\eta^{-1}}\texttt{Err}_{\eta}\right)$.

\noindent [Case iii] If $X^*_h$ is $\alpha$-weak sharp with order $\kappa=1$ and  $\eta\le{{\alpha}/{2\|\nabla f(x^*)\|}}$, then:
~\\ (iii-1) \ $0\le \mathbb{E}[h(\RRme{\bar x^R})]-h^*\le 2\, \texttt{Err}_{\eta} .$
~\\ (iii-2) \ $ {0.5\mu_f}{}\mathbb{E}[\|\RRme{\bar x^R}-x^*\|^2] -({2\|\nabla f(x^*)\|}/{\alpha}) \,\texttt{Err}_{\eta}\leq \mathbb{E}[f(\RRme{\bar x^R})] -f^* \leq {\eta^{-1}}\texttt{Err}_{\eta}$.
~\\ (iii-3)   \  $\mathbb{E}[\|\RRme{\bar x^R}-x^*\|^2] \leq ({2}/{\eta \mu_f})\texttt{Err}_{\eta}. $
\end{proposition}}
\RRme{{\begin{proof}

\noindent (i-1)   {In view of} the definition of $\texttt{Err}_{\eta}$ and $f_\eta$, and setting $f^*_\eta\triangleq f_\eta(x_\eta^*)$, we have $\mathbb{E}[h(\RRme{\bar x^R})  + \eta f(\RRme{\bar x^R})]  -f^*_\eta \leq \texttt{Err}_{\eta}.$ Also, from the definition of $f^*_\eta$, we have $f^*_\eta \leq f_\eta(x^*) = h^* +\eta f^*$. From the preceding two relations, we obtain 
\begin{align}\label{eqn:URS-i-1}
\mathbb{E}[h(\RRme{\bar x^R})]  -h^* +\eta ( \mathbb{E}[ f(\RRme{\bar x^R})]-f^*) \leq \texttt{Err}_{\eta}.
\end{align}
By invoking Assumption~\ref{assump:URS}, we have $ \mathbb{E}[ f(\RRme{\bar x^R})]>-\infty$. Therefore, there exists some $M>0$ such that $f^*- \mathbb{E}[ f(\RRme{\bar x^R})]<M$. As a result, we obtain $\mathbb{E}[h(\RRme{\bar x^R})]-h^*\leq  \texttt{Err}_{\eta}+\eta M$. From $\RRme{\bar x^R} \in X$, we also have $\mathbb{E}{[h(\RRme{\bar x^R})]}-h^* \geq 0$. This completes the proof of (i-1). 

\noindent (i-2) The upper bound holds in view of \eqref{eqn:URS-i-1} and that $\mathbb{E}[h(\RRme{\bar x^R})]  -h^*\geq 0$. To show the lower bound, from {the} \RRme{$\mu_f$-strong} convexity of $f$, we may write 
$$ \tfrac{\mu_f}{2}\mathbb{E}[\|\RRme{\bar x^R}-x^*\|^2]+\nabla f(x^*)^\top\mathbb{E}[(\RRme{\bar x^R}-x^*)]\leq \mathbb{E}[f(\RRme{\bar x^R})] - f^* .$$
Note that $\nabla f(x^*)^\top\mathbb{E}[(\RRme{\bar x^R}-x^*)]$ is not necessarily nonnegative.  We have
$$ \tfrac{\mu_f}{2}\mathbb{E}[\|\RRme{\bar x^R}-x^*\|^2]+\nabla f(x^*)^\top\mathbb{E}[(\RRme{\bar x^R}- \Pi_{X^*_h}[\RRme{\bar x^R}]+\Pi_{X^*_h}[\RRme{\bar x^R}]-x^*)]\leq \mathbb{E}[f(\RRme{\bar x^R})] - f^* .$$
Since $\Pi_{X^*_h}[\RRme{\bar x^R}] \in X^*_h$, we have $\nabla f(x^*)^\top\mathbb{E}\left[\left(\Pi_{X^*_h}[\RRme{\bar x^R}]-x^*\right)\right]\geq 0$. Therefore, we obtain 
$$ \tfrac{\mu_f}{2}\mathbb{E}[\|\RRme{\bar x^R}-x^*\|^2]+\nabla f(x^*)^\top\mathbb{E}\left[\left(\RRme{\bar x^R}- \Pi_{X^*_h}[\RRme{\bar x^R}]\right)\right]\leq \mathbb{E}[f(\RRme{\bar x^R})] - f^* .$$
Invoking the Cauchy-Schwarz {inequality}, we obtain 
\vspace{-3pt}
$$ \tfrac{\mu_f}{2}\mathbb{E}[\|\RRme{\bar x^R}-x^*\|^2]-\|\nabla f(x^*)\|\,{\left\|\mathbb{E}\left[\RRme{\bar x^R}- \Pi_{X^*_h}[{\RRme{\bar x^R}}]\right]\right\|}\leq \mathbb{E}[f(\RRme{\bar x^R}) ]- f^* .$$
{Invoking Jensen's inequality, we obtain}
\[{ \tfrac{\mu_f}{2}\mathbb{E}[\|\RRme{\bar x^R}-x^*\|^2]-\|\nabla f(x^*)\|\,\mathbb{E}\left[\left\|\RRme{\bar x^R}- \Pi_{X^*_h}[{\RRme{\bar x^R}}]\right\|\right]\leq \mathbb{E}[f(\RRme{\bar x^R}) ]- f^* .}\]
\noindent Noting that $\mathbb{E}[\mbox{dist}(\RRme{\bar x^R},X^*_h)]=\mathbb{E}\left[\left\|\RRme{\bar x^R}- \Pi_{X^*_h}[{\RRme{\bar x^R}}]\right\|\right]$, we obtain the lower bound in (i-2). 

\noindent (i-3) This result follows directly from (i-2). 

\noindent (ii-1) This result is identical to (i-1). 

\noindent (ii-2) From the weak sharp property, we have {$ \mbox{dist}^\kappa(\RRme{\bar x^R},X^*_h)  \leq \tfrac{1}{\alpha}( h(\RRme{\bar x^R}) - h^*)$. Taking expectations on both sides, we obtain 
$\mathbb{E}\left[\mbox{dist}^\kappa(\RRme{\bar x^R},X^*_h)\right]  \leq \tfrac{1}{\alpha}( \mathbb{E}[h(\RRme{\bar x^R})] - h^*).$ Moreover, by Jensen's inequality and the convexity of the polynomial function $x^\kappa$ for $x \geq 0$, we have $\left(\mathbb{E}\left[\mbox{dist}(\RRme{\bar x^R},X^*_h)\right]\right)^\kappa \leq \mathbb{E}\left[\mbox{dist}^\kappa(\RRme{\bar x^R},X^*_h)\right]$. Combining the preceding two inequalities, we obtain} $ {\mathbb{E}\left[\mbox{dist}(\RRme{\bar x^R},X^*_h)\right]  \leq \sqrt[\kappa]{\tfrac{1}{\alpha}( \mathbb{E}[h(\RRme{\bar x^R})] - h^*)}.}$ Combining this relation with (ii-1), we have $\mathbb{E}[\mbox{dist}(\RRme{\bar x^R},X^*_h)] \leq \sqrt[\kappa]{\tfrac{1}{\alpha}(\texttt{Err}_{\eta}+\eta M)}$. Substituting this bound into the result of (i-2), we arrive at (ii-2). 

\noindent (ii-3) This result follows directly from (ii-2). 

\noindent (iii-1) Consider the relation in (i-2). From the weak sharp property, we obtain 
\begin{align}\label{eqn:URS-i-2} \tfrac{\mu_f}{2}\mathbb{E}[\|\RRme{\bar x^R}-x^*\|^2] -\tfrac{\|\nabla f(x^*)\|}{\alpha}(\mathbb{E}[h(\RRme{\bar x^R})]-h^*) \leq\mathbb{E}[ f(\RRme{\bar x^R})]-f^*.
\end{align}
Multiplying both sides by $\eta$ and substituting the resulting lower bound into \eqref{eqn:URS-i-1}, we obtain 
$$\mathbb{E}[h(\RRme{\bar x^R})]  -h^* + \tfrac{\eta\mu_f}{2}\mathbb{E}[\|\RRme{\bar x^R}-x^*\|^2] -\tfrac{\eta\|\nabla f(x^*)\|}{\alpha}(\mathbb{E}[h(\RRme{\bar x^R})]-h^*) \leq \texttt{Err}_{\eta}.$$
By the choice of $\eta\le\tfrac{\alpha}{2\|\nabla f(x^*)\|}$, we obtain 
\begin{align}\label{eqn:URS-i-3}
\tfrac{1}{2}(\mathbb{E}[h(\RRme{\bar x^R}) ] -h^*) + \tfrac{\eta\mu_f}{2}\mathbb{E}[\|\RRme{\bar x^R}-x^*\|^2 ] \leq \texttt{Err}_{\eta}. 
\end{align}
Dropping the nonnegative term $\tfrac{\eta\mu_f}{2}\mathbb{E}[\|\RRme{\bar x^R}-x^*\|^2]$, we arrive at (iii-1).

\noindent (iii-2) The upper bound follows from (i-2). To show the lower bound, consider \eqref{eqn:URS-i-2}. Invoking {the result in} (iii-1), we obtain 
$\tfrac{\mu_f}{2}\mathbb{E}[\|\RRme{\bar x^R}-x^*\|^2] -\tfrac{\|\nabla f(x^*)\|}{\alpha}(2\texttt{Err}_{\eta}) \leq \mathbb{E}[f(\RRme{\bar x^R})] -f^*.
$
This completes the proof of (iii-2). 

\noindent (iii-3) Consider \eqref{eqn:URS-i-3}. Note that $\mathbb{E}[h(\RRme{\bar x^R}) ] -h^* \geq 0$. Dropping this nonnegative term and rearranging the terms, we arrive at (iii-3). 
\end{proof}}}
\Rme{In the following proposition, we establish bounds for the outer- and inner-level objective functions in problem~\eqref{problem: main problem}. These bounds are instrumental in proving the main communication complexity guarantees stated in Theorem~\ref{thm:thm2}.}

\begin{proposition}\em\label{Prop:Prop2}Let Assumptions~\ref{assump:URS} and~\ref{assumption:main3} hold \RRme{with $\mu_f>0$}. Then, for all $\gamma_l \le \tfrac{1}{16(L_h+\eta L_f)^2(1+B^2)K\gamma_g}$, $\gamma_g\ge1$ and $R\ge \tfrac{16(L_h+\eta L_f)^2(1+B^2)}{\mu_f}$ we obtain

\noindent [Case i] The following error bounds hold true. 
\vspace{-10pt}
\begin{align*}
\text{ (i-1) }0\le \mathbb{E}\left[h(\bar x^R)-h(x^*)\right]\le&\tfrac{1}{ \tilde \gamma\sum_{r=1}^{R}\theta^{r}}\mathbb{E}\left[\|\RRme{\bar x}^{0}-x_\eta^*\|^2\right]+{\tilde \gamma}\left(\tfrac{(2\eta^2\sigma_f^2+2\sigma_h^2)}{KS}+\tfrac{4G^2(N-S)}{NS}\right)\\
&+{\tilde \gamma^2}\left(\tfrac{9(L_h+\eta L_f)(2\eta^2\sigma_f^2+2\sigma_h^2)}{K\gamma_g^2}+\tfrac{18(L_h+\eta L_f)G^2}{{\gamma_g^2}}\right)+2\eta M.
\end{align*}
\vspace{-25pt}
\begin{align*}
\text{ (i-2) }&\tfrac{\mu_f}{2}\mathbb{E}\left[\|\bar x^R-x^*\|^2\right] -\|\nabla f(x^*)\| \,\mathbb{E}\left[\mbox{dist}(\bar x^R,X^*_h)\right]\\\le& \mathbb{E}\left[f(\bar x^R)-f(x^*)\right]\le\tfrac{1}{ \eta\tilde \gamma\sum_{r=1}^{R}\theta^{r}}\mathbb{E}\left[\|\RRme{\bar x}^{0}-x_\eta^*\|^2\right]+\tfrac{\tilde \gamma}{\eta}\left(\tfrac{(2\eta^2\sigma_f^2+2\sigma_h^2)}{KS}+\left(1-\tfrac{S}{N}\right)\tfrac{4}{S}G^2\right)\\
&+\tfrac{\tilde \gamma^2}{\eta}\left(\tfrac{9(L_h+\eta L_f)(2\eta^2\sigma_f^2+2\sigma_h^2)}{K\gamma_g^2}+\tfrac{18(L_h+\eta L_f)G^2}{{\gamma_g^2}}\right).
\end{align*}
\vspace{-25pt}
\begin{align*} \text{ (i-3) }&\mathbb{E}\left[\|\bar{x}^R-x^*\|^2\right] \leq  \tfrac{2}{\mu_f}\left(\|\nabla f(x^*)\| \,\mathbb{E}\left[\mbox{dist}(\bar{x}^R,X^*_h)\right] + \tfrac{1}{ \eta\tilde \gamma\sum_{r=1}^{R}\theta^{r}}\mathbb{E}\left[\|\RRme{\bar x}^{0}-x_\eta^*\|^2\right]\right.\\&\left.+\tfrac{\tilde \gamma}{\eta}\left(\tfrac{(2\eta^2\sigma_f^2+2\sigma_h^2)}{KS}+\left(1-\tfrac{S}{N}\right)\tfrac{4}{S}G^2\right)+\tfrac{\tilde \gamma^2}{\eta}\left(\tfrac{9(L_h+\eta L_f)(2\eta^2\sigma_f^2+2\sigma_h^2)}{K\gamma_g^2}+\tfrac{18(L_h+\eta L_f)G^2}{{\gamma_g^2}}\right)\right).
\end{align*}

\noindent [Case ii] Suppose $X^*_h$ is $\alpha$-weak sharp with order $\kappa > 1$. Then, the following holds. 
\vspace{-10pt}
\begin{align*}
\text{(ii-1) }0\le \mathbb{E}\left[h(\bar x^R)-h(x^*)\right]\le&\tfrac{1}{ \tilde \gamma\sum_{r=1}^{R}\theta^{r}}\mathbb{E}\left[\|\RRme{\bar x}^{0}-x_\eta^*\|^2\right]+{\tilde \gamma}\left(\tfrac{(2\eta^2\sigma_f^2+2\sigma_h^2)}{KS}+\tfrac{4G^2(N-S)}{NS}\right)\\
&+{\tilde \gamma^2}\left(\tfrac{9(L_h+\eta L_f)(2\eta^2\sigma_f^2+2\sigma_h^2)}{K\gamma_g^2}+\tfrac{18(L_h+\eta L_f)G^2}{{\gamma_g^2}}\right)+2\eta M.
\end{align*}
\vspace{-25pt}
\begin{align*}
\text{(ii-2) }&-\|\nabla f(x^*)\|\left(\tfrac{1}{\alpha \tilde \gamma\sum_{r=1}^{R}\theta^{r}}\mathbb{E}\left[\|\RRme{\bar x}^{0}-x_\eta^*\|^2\right]+\tfrac{\tilde \gamma}{\alpha}\left(\tfrac{(2\eta^2\sigma_f^2+2\sigma_h^2)}{KS}+\tfrac{4G^2(N-S)}{NS}\right)\right.\\
&\left.+\tfrac{\tilde \gamma^2}{\alpha}\left(\tfrac{9(L_h+\eta L_f)(2\eta^2\sigma_f^2+2\sigma_h^2)}{K\gamma_g^2}+\tfrac{18(L_h+\eta L_f)G^2}{{\gamma_g^2}}\right)+\tfrac{2\eta M}{\alpha}\right)^\frac{1}{\kappa}\\&\le\mathbb{E}\left[f(\bar x^R)-f(x^*)\right]\le\tfrac{1}{ \eta\tilde \gamma\sum_{r=1}^{R}\theta^{r}}\mathbb{E}\left[\|\RRme{\bar x}^{0}-x_\eta^*\|^2\right]+\tfrac{\tilde \gamma}{\eta}\left(\tfrac{(2\eta^2\sigma_f^2+2\sigma_h^2)}{KS}+\tfrac{4G^2(N-S)}{NS}\right)\\
&+\tfrac{\tilde \gamma^2}{\eta}\left(\tfrac{9(L_h+\eta L_f)(2\eta^2\sigma_f^2+2\sigma_h^2)}{K\gamma_g^2}+\tfrac{18(L_h+\eta L_f)G^2}{{\gamma_g^2}}\right).
\end{align*}
\vspace{-25pt}
\begin{align*}
\text{(ii-3) }&\mathbb{E}\left[\|\bar{x}^R-x^*\|^2\right]\leq \tfrac{2}{\mu_f \eta\tilde \gamma\sum_{r=1}^{R}\theta^{r}}\mathbb{E}\left[\|\RRme{\bar x}^{0}-x_\eta^*\|^2\right]+\tfrac{2\tilde \gamma}{\mu_f\eta}\left(\tfrac{(2\eta^2\sigma_f^2+2\sigma_h^2)}{KS}+\tfrac{4G^2(N-S)}{NS}\right)\\
&+\tfrac{2\tilde \gamma^2}{\mu_f\eta}\left(\tfrac{9(L_h+\eta L_f)(2\eta^2\sigma_f^2+2\sigma_h^2)}{K\gamma_g^2}+\tfrac{18(L_h+\eta L_f)G^2}{{\gamma_g^2}}\right)\\
&+\tfrac{2\|\nabla f(x^*)\|}{\mu_f}\left(\tfrac{1}{\alpha}\Big( \tfrac{1}{ \tilde \gamma\sum_{r=1}^{R}\theta^{r}}\mathbb{E}\left[\|\RRme{\bar x}^{0}-x_\eta^*\|^2\right]\right.+{\tilde \gamma}\left(\tfrac{(2\eta^2\sigma_f^2+2\sigma_h^2)}{KS}+\tfrac{(N-S)4G^2}{NS}\right)\\
&\left.+{\tilde \gamma^2}\left(\tfrac{9(L_h+\eta L_f)(2\eta^2\sigma_f^2+2\sigma_h^2)+18K(L_h+\eta L_f)G^2}{K\gamma_g^2}\right)+2\eta M\Big)\right)^\frac{1}{\kappa}. 
\end{align*}
\noindent [Case iii] If $X^*_h$ is $\alpha$-weak sharp with order $\kappa=1$ and  $\eta\le\tfrac{\alpha}{2\|\nabla f(x^*)\|}$, then:
\vspace{-10pt}
\begin{align*}
\text{(iii-1) }0\le\mathbb{E}\left[h(\bar x^R)-h(x^*)\right]\le&\tfrac{2}{ \tilde \gamma\sum_{r=1}^{R}\theta^{r}}\mathbb{E}\left[\|\RRme{\bar x}^{0}-x_\eta^*\|^2\right]+{2\tilde \gamma}\left(\tfrac{(2\eta^2\sigma_f^2+2\sigma_h^2)}{KS}+\tfrac{4G^2(N-S)}{NS}\right)\\
&+{2\tilde \gamma^2}\left(\tfrac{9(L_h+\eta L_f)(2\eta^2\sigma_f^2+2\sigma_h^2)}{K\gamma_g^2}+\tfrac{18(L_h+\eta L_f)G^2}{{\gamma_g^2}}\right).
\end{align*}
\vspace{-25pt}
\begin{align*}
\text{(iii-2) }&-\tfrac{2\|\nabla f(x^*)\|}{\alpha \tilde \gamma\sum_{r=1}^{R}\theta^{r}}\mathbb{E}\left[\|\RRme{\bar x}^{0}-x_\eta^*\|^2\right]-\tfrac{2\tilde \gamma\|\nabla f(x^*)\|}{\alpha}{}\left(\tfrac{(2\eta^2\sigma_f^2+2\sigma_h^2)}{KS}+\left(1-\tfrac{S}{N}\right)\tfrac{4}{S}G^2\right)\\
&-\tfrac{2\tilde \gamma^2\|\nabla f(x^*)\|}{\alpha}{}\left(\tfrac{9(L_h+\eta L_f)(2\eta^2\sigma_f^2+2\sigma_h^2)}{K\gamma_g^2}+\tfrac{18(L_h+\eta L_f)G^2}{{\gamma_g^2}}\right)\\&\le\mathbb{E}\left[f(\bar x^R)-f(x^*)\right]\le\tfrac{1}{ \eta\tilde \gamma\sum_{r=1}^{R}\theta^{r}}\mathbb{E}\left[\|\RRme{\bar x}^{0}-x_\eta^*\|^2\right]+\tfrac{\tilde \gamma}{\eta}\left(\tfrac{(2\eta^2\sigma_f^2+2\sigma_h^2)}{KS}+\tfrac{4G^2(N-S)}{NS}\right)\\
&+\tfrac{\tilde \gamma^2}{\eta}\left(\tfrac{9(L_h+\eta L_f)(2\eta^2\sigma_f^2+2\sigma_h^2)}{K\gamma_g^2}+\tfrac{18(L_h+\eta L_f)G^2}{{\gamma_g^2}}\right).
\end{align*} 
\vspace{-25pt}
\begin{align*}
\text{(iii-3) }\mathbb{E}\left[\|\bar{x}^R-x^*\|^2\right]\le&\tfrac{2}{\eta\mu_f \tilde \gamma\sum_{r=1}^{R}\theta^{r}}\mathbb{E}\left[\|\RRme{\bar x}^{0}-x_\eta^*\|^2\right]+\tfrac{2\tilde \gamma}{\eta\mu_f}\left(\tfrac{(2\eta^2\sigma_f^2+2\sigma_h^2)}{KS}+\tfrac{4G^2(N-S)}{NS}\right)\\
&+\tfrac{2\tilde \gamma^2}{\eta\mu_f}\left(\tfrac{9(L_h+\eta L_f)(2\eta^2\sigma_f^2+2\sigma_h^2)}{K\gamma_g^2}+\tfrac{18(L_h+\eta L_f)G^2}{{\gamma_g^2}}\right).
\end{align*}
\end{proposition}
\Rme{We now derive the bounds in Theorem~\ref{thm:thm2} using the previous results.

}
\begin{proof}[Proof of Theorem~\ref{thm:thm2}]

\noindent (i-1)
Based on the definition of $\theta$, we have $\eta \tilde \gamma={2}/{\mu_f}\left(1-{1}/{\theta}\right)$. Invoking Proposition~\ref{Prop:Prop2}, we obtain 
\begin{align*}
&\mathbb{E}\left[f(\bar x^R)-f(x^*)\right]\le\tfrac{1}{{2}/{\mu_f}\left(1-\tfrac{1}{\theta}\right)\tfrac{\theta^{R+1}-\theta}{\theta-1}}\mathbb{E}\left[\|\RRme{\bar x}^{0}-x_\eta^*\|^2\right]+\tfrac{\tilde \gamma}{\eta}\left(\tfrac{(2\eta^2\sigma_f^2+2\sigma_h^2)}{KS}+\left(1-\tfrac{S}{N}\right)\tfrac{4}{S}G^2\right)\\
&+\tfrac{\tilde \gamma^2}{\eta}\left(\tfrac{9(L_h+\eta L_f)(2\eta^2\sigma_f^2+2\sigma_h^2)}{K\gamma_g^2}+\tfrac{18(L_h+\eta L_f)G^2}{{\gamma_g^2}}\right)=\tfrac{1}{{2}/{\mu_f}\left(\theta^{R}-1\right)}\mathbb{E}\left[\|\RRme{\bar x}^{0}-x_\eta^*\|^2\right]\\
&+\tfrac{\tilde \gamma}{\eta}\left(\tfrac{(2\eta^2\sigma_f^2+2\sigma_h^2)}{KS}+\left(1-\tfrac{S}{N}\right)\tfrac{4}{S}G^2\right)+\tfrac{\tilde \gamma^2}{\eta}\left(\tfrac{9(L_h+\eta L_f)(2\eta^2\sigma_f^2+2\sigma_h^2)}{K\gamma_g^2}+\tfrac{18(L_h+\eta L_f)G^2}{{\gamma_g^2}}\right).
\end{align*}
Invoking the definition of $\theta$, we obtain
\begin{align*}
&\mathbb{E}\left[f(\bar x^R)-f(x^*)\right]\le\tfrac{\mu_f}{\tfrac{2}{(1-0.5{\eta \mu_f \tilde \gamma}{})^{R}}-2}\mathbb{E}\left[\|\RRme{\bar x}^{0}-x_\eta^*\|^2\right]\\
&+\tfrac{\tilde \gamma}{\eta}\left(\tfrac{(2\eta^2\sigma_f^2+2\sigma_h^2)}{KS}+\left(1-\tfrac{S}{N}\right)\tfrac{4}{S}G^2\right)
+\tfrac{\tilde \gamma^2}{\eta}\left(\tfrac{9(L_h+\eta L_f)(2\eta^2\sigma_f^2+2\sigma_h^2)}{K\gamma_g^2}+\tfrac{18(L_h+\eta L_f)G^2}{{\gamma_g^2}}\right).
\end{align*}
By choosing $\tilde \gamma\triangleq {1}/{\mu_f^aR^a}$, $\eta\triangleq {p\RRme{\ln}(R)}/{\mu_f^bR^b}$, \RRme{and $\hat\eta\triangleq \max_R\eta=\frac{p}{\exp(1) b \mu_f^b}$}, we obtain
\begin{align}
&\mathbb{E}\left[f(\bar x^R)-f(x^*)\right]\le\tfrac{\mu_f}{\tfrac{2}{(1-{p \RRme{\ln}(R)}/{2R})^{R}}-2}\mathbb{E}\left[\|\RRme{\bar x}^{0}-x_\eta^*\|^2\right]+\tfrac{1}{p\RRme{\ln}(R)\mu_f^{a-b}R^{a-b}}\notag\\
&\times\left(\left(\tfrac{(2\RRme{\hat\eta}^2\sigma_f^2+2\sigma_h^2)}{KS}+\tfrac{(N-S)4G^2}{NS}\right)+\left(\tfrac{9(L_h+\RRme{\hat\eta} L_f)(2\RRme{\hat\eta}^2\sigma_f^2+2\sigma_h^2)}{\mu_f^aR^aK\gamma_g^2}+\tfrac{18(L_h+\RRme{\hat\eta} L_f)G^2}{{\mu_f^aR^a\gamma_g^2}}\right)\right).\label{eqqq:coeficient}
\end{align}
For the coefficient of the first term, we have
\begin{align*}
&\tfrac{\mu_f}{\tfrac{2}{(1-{p \RRme{\ln}(R)}/{2R})^{R}}-2}=\tfrac{\mu_f}{\tfrac{2}{\left((1-{p \RRme{\ln}(R)}/{2R})^{{2R}/{p\RRme{\ln}(R)}}\right)^{{p\RRme{\ln}(R)}/{2}}}-2}\\
&\le\tfrac{\mu_f}{\tfrac{2}{\lim_{R\rightarrow \infty}\left((1-{p \RRme{\ln}(R)}/{2R})^{{2R}/{p\RRme{\ln}(R)}}\right)^\frac{p\RRme{\ln}(R)}{2}}-2}=\tfrac{\mu_f}{\tfrac{2}{\left(\exp(-1)\right)^\frac{p\RRme{\ln}(R)}{2}}-2}=\tfrac{\mu_f}{2\left(R^\frac{p}{2}-1\right)}.
\end{align*}
Substituting the preceding bound in~\eqref{eqqq:coeficient}, we obtain
\begin{align*}
&\mathbb{E}\left[f(\bar x^R)-f(x^*)\right]\le\tfrac{\mu_f}{2\left(R^{{p}/{2}}-1\right)}\mathbb{E}\left[\|\RRme{\bar x}^{0}-x_\eta^*\|^2\right]+\tfrac{1}{p\RRme{\ln}(R)\mu_f^{a-b}R^{a-b}}\\
&\times\left(\left(\tfrac{(2\RRme{\hat\eta}^2\sigma_f^2+2\sigma_h^2)}{KS}+\tfrac{(N-S)4G^2}{NS}\right)+\left(\tfrac{9(L_h+\RRme{\hat\eta} L_f)(2\RRme{\hat\eta}^2\sigma_f^2+2\sigma_h^2)}{\mu_f^aR^aK\gamma_g^2}+\tfrac{18(L_h+\RRme{\hat\eta} L_f)G^2}{{\mu_f^aR^a\gamma_g^2}}\right)\right).
\end{align*}
Based on the definitions of $Q$, $W$, $Y$, and $E$, we obtain the final result.

\noindent (i-2) Based on the previous part and Proposition~\ref{Prop:Prop2}, we obtain
\begin{align*}
\mathbb{E}\left[h(\bar x^R)-h(x^*)\right]\le&\tfrac{p\RRme{\ln}(R)\mu_f^{1-b}}{2R^b\left(R^{p/2}-1\right)}\mathbb{E}\left[\|\RRme{\bar x}^{0}-x_\eta^*\|^2\right]+\tfrac{1}{\mu_f^{a}R^{a}}\left(\tfrac{(2\RRme{\hat\eta}^2\sigma_f^2+2\sigma_h^2)}{KS}+\left(1-\tfrac{S}{N}\right)\tfrac{4}{S}G^2\right)\\
&+\tfrac{1}{\mu_f^{2a}R^{2a}}\left(\tfrac{9(L_h+\RRme{\hat\eta} L_f)(2\RRme{\hat\eta}^2\sigma_f^2+2\sigma_h^2)}{K\gamma_g^2}+\tfrac{18(L_h+\RRme{\hat\eta} L_f)G^2}{{\gamma_g^2}}\right)+\tfrac{p\RRme{\ln}(R)}{\mu_f^{b}R^{b}}2 M.
\end{align*}
\far{From} definitions of $Q$, $W$, $Y$, and $E$, the final result is obtained. Using Proposition~\ref{Prop:Prop2}, we \far{obtain the} results in parts \noindent (ii) and \noindent (iii).
\end{proof}  

\Rme{Following this, we employ a similar approach to derive the communication complexity bounds for StR-FedAvg when the {outer loss} is convex.}
\begin{lemma}\em\label{lemma:first bound for x-x*-f convex}Let $\RRme{\bar x}^r$ be generated by Algorithm~\ref{Alg:FEDAVG}. \RRme{Let Assumptions~\ref{assump:URS} and~\ref{assumption:main3} hold with $\mu_f=0$. Then,} \far{for} $\gamma_l\le\tfrac{1}{16K \gamma_g(L_h+\eta L_f)^2(1+ B^2)}$ and $\tilde \gamma \triangleq \gamma_l \gamma_g K$, 
\begin{align*}
\mathbb{E}\left[\|\RRme{\bar x}^r-x_\eta^*\|^2\right]\le&\mathbb{E}\left[\|\RRme{\bar x}^{r-1}-x_\eta^*\|^2\right]-\tilde \gamma\mathbb{E}\left[(f_\eta(\RRme{\bar x}^{r-1})-f_\eta(x_\eta^*))\right]\\
&+3\tilde \gamma(L_h+\eta L_f)\varepsilon_r+\tfrac{\tilde \gamma^2(2\eta^2\sigma_f^2+2\sigma_h^2)}{KS}+\left(1-\tfrac{S}{N}\right)\tfrac{4\tilde \gamma^2}{S}G^2.
\end{align*}
where $\varepsilon_r$ is the drift caused by the local updates on the clients defined to be
$\varepsilon_r\triangleq \tfrac{1}{KS}\textstyle{\sum_{k=1}^K\sum_{i=1}^N} \mathbb{E}\left[\|y_{i,k}^r-\RRme{\bar x}^{r-1}\|^2|\mathcal{F}_r\right].$
\end{lemma}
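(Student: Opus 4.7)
The plan is to mirror the proof of Lemma~\ref{lemma:first bound for x-x*}, which is the strongly convex counterpart, and track how the bound simplifies when $\mu_f=0$. Starting from the server update $x^r=x^{r-1}+\gamma_g\Delta x$, I would expand
$$\mathbb{E}\left[\|x^r-x_\eta^*\|^2\mid \mathcal{F}_r\right]=\|x^{r-1}-x_\eta^*\|^2+2\gamma_g\mathbb{E}\left[\langle\Delta x,x^{r-1}-x_\eta^*\rangle\mid\mathcal{F}_r\right]+\gamma_g^2\mathbb{E}\left[\|\Delta x\|^2\mid\mathcal{F}_r\right],$$
and separate $\Delta x$ into its conditional mean (a noisy local gradient of $f_\eta$ averaged over local steps) and zero-mean fluctuations arising from stochastic gradients and client sampling. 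The cross term becomes $-2\tilde\gamma\cdot\tfrac{1}{KS}\sum_{k,i}\langle\nabla f_{\eta,i}(y_{i,k}^r),x^{r-1}-x_\eta^*\rangle$ in expectation. This is exactly the quantity that Lemma~\ref{lemma:bound for f eta and z-y} is tailored to handle, but now applied with $\mu_f=0$ (or equivalently the convex version of it), giving
$$\langle\nabla f_\eta(y_{i,k}^r),x_\eta^*-x^{r-1}\rangle\le f_\eta(x_\eta^*)-f_\eta(x^{r-1})+(L_h+\eta L_f)\|x_\eta^*-y_{i,k}^r\|^2.$$
Summing this over $k,i$ produces the $-\tilde\gamma(f_\eta(x^{r-1})-f_\eta(x_\eta^*))$ descent term together with the drift term $3\tilde\gamma(L_h+\eta L_f)\varepsilon_r$ after collecting the $\|x_\eta^*-y_{i,k}^r\|^2$ contributions (using $\|a-b\|^2\le 2\|a-c\|^2+2\|c-b\|^2$ with $c=x^{r-1}$ and absorbing the $\|x^{r-1}-x_\eta^*\|^2$ piece into the leading term via the stepsize restriction). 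The crucial departure from Lemma~\ref{lemma:first bound for x-x*} is that, with $\mu_f=0$, the $\tfrac{\eta\mu_f}{4}\|x^{r-1}-x_\eta^*\|^2$ term vanishes, so no contraction factor $(1-\tfrac{\eta\mu_f\tilde\gamma}{2})$ appears.

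For the second-moment piece $\gamma_g^2\mathbb{E}[\|\Delta x\|^2\mid\mathcal{F}_r]$, I would decompose it into (a) the variance of the per-step stochastic regularized gradients $\nabla\tilde f_i+\eta\nabla\tilde h_i$, which under Assumption~\ref{assumption:main3} contributes $\tfrac{\tilde\gamma^2(2\eta^2\sigma_f^2+2\sigma_h^2)}{KS}$ after averaging over $K$ local steps and $S$ sampled clients, and (b) the partial-participation variance $(1-\tfrac{S}{N})\tfrac{4\tilde\gamma^2}{S}G^2$, which follows from the bounded gradient dissimilarity estimate in Lemma~\ref{remark:bound in terms of the obj} applied to the local regularized gradients at $x^{r-1}$. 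The $B^2(f_\eta(x^{r-1})-f_\eta^*)$ term produced by BGD must be absorbed into the $-\tilde\gamma(f_\eta(x^{r-1})-f_\eta(x_\eta^*))$ descent term; this is precisely what dictates the stepsize condition $\gamma_l\le\tfrac{1}{16K\gamma_g(L_h+\eta L_f)^2(1+B^2)}$, ensuring $\tilde\gamma\gamma_g(L_h+\eta L_f)B^2\le \tfrac{\tilde\gamma}{2}$ (or an analogous factor) so that the coefficient on $f_\eta(x^{r-1})-f_\eta^*$ remains negative with room to spare.

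The main obstacle, compared to the strongly convex case, is that without the $\mu_f>0$ contraction one cannot telescope multiplicatively, so one has to be more careful that every error term arising from the stochastic gradient variance and the BGD bound is either additive in $\tilde\gamma^2$ or is controlled by the $-\tilde\gamma(f_\eta(x^{r-1})-f_\eta^*)$ slack. Once the stepsize is chosen to guarantee this absorption (exactly as in~\cite[Lemma~7]{karimireddy2020scaffold}, specialized to the regularized smoothness constant $L_h+\eta L_f$ and the regularized BGD constants $G,B$ from Lemma~\ref{remark:bound in terms of the obj}), the stated inequality follows immediately by taking total expectation. The drift term $\varepsilon_r$ itself is left unbounded here; its control by $\mathbb{E}[f_\eta(x^{r-1})-f_\eta^*]$ and additive noise is the job of the analog of Lemma~\ref{lemma: bound for varepsilon-k} in the convex setting, which would be invoked subsequently to close the recursion.
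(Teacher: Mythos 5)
Your proposal is correct and follows essentially the same route as the paper: the paper's own proof consists of the single observation that one repeats the argument of Lemma~\ref{lemma:first bound for x-x*} (i.e., the SCAFFOLD Lemma~7 analysis with the regularized smoothness constant $L_h+\eta L_f$ and the BGD constants from Lemma~\ref{remark:bound in terms of the obj}) with $\mu_f=0$, which is exactly what you carry out, correctly identifying that the only structural change is the disappearance of the contraction factor $\left(1-\tfrac{\eta\mu_f\tilde\gamma}{2}\right)$. The added detail you supply (the cross-term/second-moment decomposition and the absorption of the BGD term via the stepsize restriction) is consistent with that reference proof, modulo the cosmetic slip of writing $\nabla\tilde f_i+\eta\nabla\tilde h_i$ for the regularized stochastic gradient $\eta\nabla\tilde f_i+\nabla\tilde h_i$.
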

\Rme{In the following proposition, we derive an upper bound for the regularized function $f_\eta(x)$.}

\begin{proposition}\em\label{Proposition:Proposition3} Let $\bar x^R=\sum_{r=1}^{R}\tfrac{\theta^{r}}{\sum_{\hat r=1}^{R-1}\theta^{\hat r}} \RRme{\bar x}^{r-1}$ be generated by \RRRme{StR-FedAvg} in Algorithm~\ref{Alg:FEDAVG}, where $\theta:=\tfrac{1}{1-0.5{\eta \mu_f \tilde \gamma}}$. Let Assumptions~\ref{assump:URS} and~\ref{assumption:main3} hold \RRme{with} $\mu_f=0$. Then for all $\gamma_l\le\min\{\tfrac{1}{27K B^2(L_h+\eta L_f)},\tfrac{1}{16K \gamma_g(L_h+\eta L_f)^2(1+ B^2)}\}$ we have
\begin{align*}
& \mathbb{E}\left[(f_\eta(\bar{x}^{R})-f_\eta(x_\eta^*))\right]\le\tfrac{3}{\tilde \gamma R}\left(\mathbb{E}\left[\|\RRme{\bar x}^{0}-x_\eta^*\|^2\right]-\mathbb{E}\left[\|\RRme{\bar x}^R-x_\eta^*\|^2\right]\right)\\
&+3\tilde \gamma\left(\tfrac{(2\eta^2\sigma_f^2+2\sigma_h^2)}{KS}+\left(1-\tfrac{S}{N}\right)\tfrac{4}{S}G^2\right)+3\tilde \gamma^2\left(\tfrac{9(L_h+\eta L_f)(2\eta^2\sigma_f^2+2\sigma_h^2)}{K\gamma_g^2}+\tfrac{18(L_h+\eta L_f)G^2}{{\gamma_g^2}}\right).
\end{align*}

\end{proposition}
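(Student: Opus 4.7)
The plan is to mirror the argument used in the proof of Proposition~\ref{Proposition:Proposition1} but specialized to the regime $\mu_f = 0$. In this regime the geometric weighting collapses: since $\theta = 1/(1-\tfrac{\eta\mu_f\tilde\gamma}{2}) = 1$, the weighted average $\bar{x}^R$ reduces to the simple ergodic average $\tfrac{1}{R}\sum_{r=1}^{R} x^{r-1}$, and the one-step recursion becomes additive rather than geometric, which is why the final bound scales like $1/(\tilde\gamma R)$ instead of $1/(\tilde\gamma\sum_r \theta^r)$.

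First I would combine Lemma~\ref{lemma:first bound for x-x*-f convex} (the convex-case one-step bound) with Lemma~\ref{lemma: bound for varepsilon-k} to eliminate the client-drift term $3\tilde\gamma(L_h+\eta L_f)\varepsilon_r$. Plugging the drift bound into the recursion absorbs $\tfrac{2\tilde\gamma}{3}\mathbb{E}[f_\eta(x^{r-1})-f_\eta^*]$ against the $-\tilde\gamma\,\mathbb{E}[f_\eta(x^{r-1})-f_\eta^*]$ already present, leaving a net coefficient of $-\tfrac{\tilde\gamma}{3}$. Note that Lemma~\ref{lemma: bound for varepsilon-k} remains applicable here because its proof only requires convexity (not strong convexity) of the regularized local functions $f_{\eta,i}$, which holds when $h_i$ is convex, $f_i$ is convex, and $\eta>0$.

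Next I would rearrange to isolate the suboptimality term on the left and telescope over $r=1,\ldots,R$ to obtain
\begin{align*}
\tfrac{\tilde\gamma}{3}\sum_{r=1}^{R}\mathbb{E}[f_\eta(x^{r-1})-f_\eta^*] \le \mathbb{E}[\|x^{0}-x_\eta^*\|^2] - \mathbb{E}[\|x^R-x_\eta^*\|^2] + R\tilde\gamma^2 Y_1 + R\tilde\gamma^3 Y_2,
\end{align*}
where $Y_1$ and $Y_2$ collect the sampling-noise and BGD constants $\tfrac{2\eta^2\sigma_f^2+2\sigma_h^2}{KS}+(1-\tfrac{S}{N})\tfrac{4}{S}G^2$ and $\tfrac{9(L_h+\eta L_f)(2\eta^2\sigma_f^2+2\sigma_h^2)}{K\gamma_g^2}+\tfrac{18(L_h+\eta L_f)G^2}{\gamma_g^2}$ appearing in the statement. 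Multiplying through by $3/(\tilde\gamma R)$ and applying Jensen's inequality to the convex function $f_\eta$ together with $\bar{x}^R = \tfrac{1}{R}\sum_{r=1}^R x^{r-1}$ delivers the claimed bound on $\mathbb{E}[f_\eta(\bar{x}^R)-f_\eta^*]$.

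The main obstacle is purely bookkeeping rather than conceptual: I must verify that the stated stepsize hypothesis $\gamma_l \le \min\{\tfrac{1}{27KB^2(L_h+\eta L_f)},\tfrac{1}{16K\gamma_g(L_h+\eta L_f)^2(1+B^2)}\}$ matches what both Lemma~\ref{lemma:first bound for x-x*-f convex} and Lemma~\ref{lemma: bound for varepsilon-k} require, and that the BGD constants $G^2$ and $B^2$ produced by Lemma~\ref{remark:bound in terms of the obj} transfer correctly through the drift estimate. No new analytic ingredient beyond the strongly convex case is needed; the argument only exploits that the strong-convexity contraction factor $(1-\tfrac{\eta\mu_f\tilde\gamma}{2})$ degenerates to $1$, replacing the geometric telescope with an arithmetic one.
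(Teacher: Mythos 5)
Your proposal is correct and follows essentially the same route as the paper's proof: combine Lemma~\ref{lemma:first bound for x-x*-f convex} with Lemma~\ref{lemma: bound for varepsilon-k} to absorb the drift and obtain the net coefficient $-\tfrac{\tilde\gamma}{3}$, telescope the resulting additive recursion, and apply Jensen's inequality to the average iterate (which, as you note, collapses to the uniform average since $\theta=1$ when $\mu_f=0$). The bookkeeping on the stepsize condition and the BGD constants matches the paper as well.
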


\Rme{In Proposition~\ref{Proposition:Proposition3}, we derived a bound for the regularized function $f_\eta(x)$. Analogous to the strongly convex case, the following proposition establishes a connection between the bound on the regularized function and bounds on the outer- and inner-level objective functions in problem~\eqref{problem: main problem}. This result plays a crucial role in deriving our communication complexity guarantees.

\begin{proposition}\em\label{thm:URS-convex}Let $\texttt{Err}_{\eta}$ be the upper bound we derived in Proposition~\ref{Proposition:Proposition3} on the optimality error metric $\mathbb{E}\left[f_\eta(\bar x^R)-f_\eta(x_\eta^*)\right]$, where $f_\eta(x) := h(x)+\eta f(x)$ and $f_\eta(x_\eta^*):= \inf_{x \in \mathbb{R}^n} f_\eta(x)$. Let Assumption~\ref{assump:URS} hold \RRme{with} $\mu_f=0$. Let $x^*$ denote an arbitrary optimal solution to the bilevel problem~\eqref{problem: main problem}. Let $f^* := \inf_{x\in X^*_h} \ f(x)$ denote the optimal objective value of the bilevel problem and $h^* := \inf_{x \in \mathbb{R}^n} h(x)$. 

\noindent [Case i] The following error bounds hold true {for some $M>0$}.
~\\
 (i-1) \ $0\le \mathbb{E}[ h(\RRme{\bar x^R})]-h^*\le  \texttt{Err}_{\eta}+\eta M$.
~\\
 (i-2) \ $ -\|\nabla f(x^*)\| \,\mathbb{E}[\mbox{dist}(\RRme{\bar x^R},X^*_h)] \leq \mathbb{E}[f(\RRme{\bar x^R})] -f^* \leq {\eta^{-1}}\texttt{Err}_{\eta}$.

\noindent [Case ii] Suppose $X^*_h$ is $\alpha$-weak sharp with order $\kappa > 1$. Then, {for some $M>0$}:
~\\
 (ii-1) \ $0\le \mathbb{E}[h(\RRme{\bar x^R})]-h^*\le  \texttt{Err}_{\eta}+\eta M$.
~\\
 (ii-2) \ $ -\|\nabla f(x^*)\| \,\sqrt[\kappa]{\tfrac{1}{\alpha}\left(\texttt{Err}_{\eta}+\eta M\right)} \leq \mathbb{E}[f(\RRme{\bar x^R})] -f^* \leq \tfrac{1}{\eta}\texttt{Err}_{\eta}$.

\noindent [Case iii] If $X^*_h$ is $\alpha$-weak sharp with order $\kappa=1$ and  $\eta\le{{\alpha}/{2\|\nabla f(x^*)\|}}$, then:
~\\ (iii-1) \ $0\le \mathbb{E}[h(\RRme{\bar x^R})]-h^*\le 2\, \texttt{Err}_{\eta} .$
~\\ (iii-2) \ $  -({2\|\nabla f(x^*)\|}/{\alpha}) \,\texttt{Err}_{\eta}\leq \mathbb{E}[f(\RRme{\bar x^R})] -f^* \leq {\eta^{-1}}\texttt{Err}_{\eta}$.

\end{proposition}
\RRme{\begin{proof}

\noindent (i-1) The proof is similar to that of part (i-1) in Proposition~\ref{thm:URS}.

\noindent (i-2) The upper bound is derived using the same approach as in part (i-2) of Proposition~\ref{thm:URS}. To establish the lower bound, using the convexity of $f$, we may write
$$ \nabla f(x^*)^\top\mathbb{E}[(\RRme{\bar x^R}-x^*)]\leq \mathbb{E}[f(\RRme{\bar x^R})] - f^* ,$$
Note that $\nabla f(x^*)^\top\mathbb{E}[(\RRme{\bar x^R}-x^*)]$ is not necessarily nonnegative. By adding and subtracting $\Pi_{X^*_h}[\RRme{\bar x^R}]$, we obtain
$$\nabla f(x^*)^\top\mathbb{E}[(\RRme{\bar x^R}- \Pi_{X^*_h}[\RRme{\bar x^R}]+\Pi_{X^*_h}[\RRme{\bar x^R}]-x^*)]\leq \mathbb{E}[f(\RRme{\bar x^R})] - f^* .$$
In view of $\Pi_{X^*_h}[\RRme{\bar x^R}] \in X^*_h$, we have $\nabla f(x^*)^\top\mathbb{E}\left[\left(\Pi_{X^*_h}[\RRme{\bar x^R}]-x^*\right)\right]\geq 0$. Thus, we obtain 
$$\nabla f(x^*)^\top\mathbb{E}\left[\left(\RRme{\bar x^R}- \Pi_{X^*_h}[\RRme{\bar x^R}]\right)\right]\leq \mathbb{E}[f(\RRme{\bar x^R})] - f^* .$$
Applying the Cauchy-Schwarz inequality, we obtain
\vspace{-3pt}
$$ -\|\nabla f(x^*)\|\,{\left\|\mathbb{E}\left[\RRme{\bar x^R}- \Pi_{X^*_h}[{\RRme{\bar x^R}}]\right]\right\|}\leq \mathbb{E}[f(\RRme{\bar x^R}) ]- f^* .$$
{Invoking Jensen's inequality, we obtain}
\[{ -\|\nabla f(x^*)\|\,\mathbb{E}\left[\left\|\RRme{\bar x^R}- \Pi_{X^*_h}[\fyy{\RRme{\bar x^R}}]\right\|\right]\leq \mathbb{E}[f(\RRme{\bar x^R}) ]- f^* .}\]
\noindent Since $\mathbb{E}[\mbox{dist}(\RRme{\bar x^R},X^*_h)]=\mathbb{E}\left[\left\|\RRme{\bar x^R}- \Pi_{X^*_h}[{\RRme{\bar x^R}}]\right\|\right]$, we obtain the lower bound in (i-2).

\noindent (ii-1) This result is identical to (i-1). 

\noindent (ii-2) The proof follows the same arguments as those in part (ii-2) of Proposition~\ref{thm:URS}.

\noindent (iii-1) Consider the relation in (i-2). By the weak sharp property, we obtain 
\begin{align}\label{eqn:URS-i-2-cvx} -\tfrac{\|\nabla f(x^*)\|}{\alpha}(\mathbb{E}[h(\RRme{\bar x^R})]-h^*) \leq\mathbb{E}[ f(\RRme{\bar x^R})]-f^*.
\end{align}
We also have
\begin{align}\label{eqn:URS-i-1-cnx}
\mathbb{E}[h(\RRme{\bar x^R})]  -h^* +\eta\left ( \mathbb{E}[ f(\RRme{\bar x^R})]-f^*\right) \leq \texttt{Err}_{\eta}.
\end{align}
Multiplying both sides of~\eqref{eqn:URS-i-2-cvx} by $\eta$ and substituting the resulting lower bound for $\eta\left(\mathbb{E}[f(\RRme{\bar x^R})]-f^*\right)$ into~\eqref{eqn:URS-i-1-cnx}, we obtain
$$\mathbb{E}[h(\RRme{\bar x^R})]  -h^*  -\tfrac{\eta\|\nabla f(x^*)\|}{\alpha}(\mathbb{E}[h(\RRme{\bar x^R})]-h^*) \leq \texttt{Err}_{\eta}.$$
Under the condition $\eta\le\tfrac{\alpha}{2\|\nabla f(x^*)\|}$, we obtain 
\begin{align}\label{eqn:URS-i-3}
\tfrac{1}{2}(\mathbb{E}[h(\RRme{\bar x^R}) ] -h^*)  \leq \texttt{Err}_{\eta}. 
\end{align}

\noindent (iii-2) The upper bound follows directly from (i-2). For the lower bound, consider \eqref{eqn:URS-i-2-cvx}. Applying the result of (iii-1), we obtain $-\tfrac{\|\nabla f(x^*)\|}{\alpha}(2\texttt{Err}_{\eta}) \leq \mathbb{E}[f(\RRme{\bar x^R})] -f^*.$
This completes the proof of (iii-2).

\end{proof}}
In the following proposition, as in the strongly convex case, we derive bounds for the outer- and inner-level objective functions in problem~\eqref{problem: main problem}. These bounds are key to establishing the communication complexity guarantees of Theorem~\ref{thm:thm3}.}
\begin{proposition}\em\label{Pro:Prop4}Let Assumptions~\ref{assump:URS} and~\ref{assumption:main3} hold \RRme{with $\mu_f=0$}. \far{For all $\gamma_l \le \tfrac{1}{16(L_h+\eta L_f)^2(1+ B^2)K\gamma_g}$, and $\gamma_g\ge1$ we have}

\noindent [Case i] The following error bounds hold true. 
\vspace{-10pt}
\begin{align*}
\text{(i-1) }&0 \le \mathbb{E}\left[(h(\bar{x}^{R})-h(x^*))\right]\le\tfrac{3}{\tilde \gamma R}\left(\mathbb{E}\left[\|\RRme{\bar x}^{0}-x_\eta^*\|^2\right]-\mathbb{E}\left[\|\RRme{\bar x}^R-x_\eta^*\|^2\right]\right)+2\eta M\\
&+3\tilde \gamma\left(\tfrac{(2\eta^2\sigma_f^2+2\sigma_h^2)}{KS}+\tfrac{(N-S)4G^2}{NS}\right)+3\tilde \gamma^2\left(\tfrac{9(L_h+\eta L_f)(2\eta^2\sigma_f^2+2\sigma_h^2)}{K\gamma_g^2}+\tfrac{18(L_h+\eta L_f)G^2}{{\gamma_g^2}}\right).
\end{align*}
\vspace{-25pt}
\begin{align*}
\text{(i-2) } & -\|\nabla f(x^*)\| \,\mbox{dist}(\bar{x}^{R},X^*_h)\\ &\le\mathbb{E}\left[(f(\bar{x}^{R})-f(x^*))\right]\le\tfrac{3}{\eta\tilde \gamma R}\left(\mathbb{E}\left[\|\RRme{\bar x}^{0}-x_\eta^*\|^2\right]-\mathbb{E}\left[\|\RRme{\bar x}^R-x_\eta^*\|^2\right]\right)\\
&+\tfrac{3\tilde \gamma}{\eta}\left(\tfrac{(2\eta^2\sigma_f^2+2\sigma_h^2)}{KS}+\tfrac{(N-S)4G^2}{NS}\right)+\tfrac{3\tilde \gamma^2}{\eta}\left(\tfrac{9(L_h+\eta L_f)(2\eta^2\sigma_f^2+2\sigma_h^2)}{K\gamma_g^2}+\tfrac{18(L_h+\eta L_f)G^2}{{\gamma_g^2}}\right).
\end{align*}
\noindent [Case ii] Suppose $X^*_h$ is $\alpha$-weak sharp with order $\kappa > 1$. Then, the following holds. 
\vspace{-10pt}
\begin{align*}
\text{(ii-1) }&0 \le \mathbb{E}\left[(h(\bar{x}^{R})-h(x^*))\right]\le\tfrac{3}{\tilde \gamma R}\left(\mathbb{E}\left[\|\RRme{\bar x}^{0}-x_\eta^*\|^2\right]-\mathbb{E}\left[\|\RRme{\bar x}^R-x_\eta^*\|^2\right]\right)+2\eta M\\
&+3\tilde \gamma\left(\tfrac{(2\eta^2\sigma_f^2+2\sigma_h^2)}{KS}+\tfrac{(N-S)4G^2}{NS}\right)+3\tilde \gamma^2\left(\tfrac{9(L_h+\eta L_f)(2\eta^2\sigma_f^2+2\sigma_h^2)}{K\gamma_g^2}+\tfrac{18(L_h+\eta L_f)G^2}{{\gamma_g^2}}\right).
\end{align*}
\vspace{-25pt}
\begin{align*}
\text{(ii-2) }&-\|\nabla f(x^*)\|\left(\tfrac{3}{\alpha \tilde \gamma R}\left(\mathbb{E}\left[\|\RRme{\bar x}^{0}-x_\eta^*\|^2\right]-\mathbb{E}\left[\|\RRme{\bar x}^R-x_\eta^*\|^2\right]\right)+\tfrac{2\eta M}{\alpha}\right.\\
&\left.+\tfrac{3\tilde \gamma}{\alpha}\left(\tfrac{(2\eta^2\sigma_f^2+2\sigma_h^2)}{KS}+\tfrac{(N-S)4G^2}{NS}\right)+\tfrac{3\tilde \gamma^2}{\alpha}\left(\tfrac{9(L_h+\eta L_f)(2\eta^2\sigma_f^2+2\sigma_h^2)}{K\gamma_g^2}+\tfrac{18(L_h+\eta L_f)G^2}{{\gamma_g^2}}\right)\right)^\frac{1}{\kappa}\\&\le\mathbb{E}\left[f(\bar x^R)-f(x^*)\right]\le\tfrac{3}{\eta\tilde \gamma R}\left(\mathbb{E}\left[\|\RRme{\bar x}^{0}-x_\eta^*\|^2\right]-\mathbb{E}\left[\|\RRme{\bar x}^R-x_\eta^*\|^2\right]\right)\\
&+\tfrac{3\tilde \gamma}{\eta}\left(\tfrac{(2\eta^2\sigma_f^2+2\sigma_h^2)}{KS}+\left(1-\tfrac{S}{N}\right)\tfrac{4}{S}G^2\right)+\tfrac{3\tilde \gamma^2}{\eta}\left(\tfrac{9(L_h+\eta L_f)(2\eta^2\sigma_f^2+2\sigma_h^2)}{K\gamma_g^2}+\tfrac{18(L_h+\eta L_f)G^2}{{\gamma_g^2}}\right).
\end{align*}
\noindent [Case iii] If $X^*_h$ is $\alpha$-weak sharp with order $\kappa=1$ and  $\eta\le{\alpha}/{2\|\nabla f(x^*)\|}$, then 
\vspace{-10pt}
\begin{align*}
\text{(iii-1) }0&\le \mathbb{E}\left[(h(\bar{x}^{R})-h(x^*))\right]\le\tfrac{6}{\tilde \gamma R}\left(\mathbb{E}\left[\|\RRme{\bar x}^{0}-x_\eta^*\|^2\right]-\mathbb{E}\left[\|\RRme{\bar x}^R-x_\eta^*\|^2\right]\right)\\
&+6\tilde \gamma\left(\tfrac{(2\eta^2\sigma_f^2+2\sigma_h^2)}{KS}+\tfrac{(N-S)4G^2}{NS}\right)+6\tilde \gamma^2\left(\tfrac{9(L_h+\eta L_f)(2\eta^2\sigma_f^2+2\sigma_h^2)}{K\gamma_g^2}+\tfrac{18(L_h+\eta L_f)G^2}{{\gamma_g^2}}\right).
\end{align*}
\vspace{-25pt}
\begin{align*}
\text{(iii-2) }&-\tfrac{6\|\nabla f(x^*)\|}{\alpha\tilde \gamma R}\left(\mathbb{E}\left[\|\RRme{\bar x}^{0}-x_\eta^*\|^2\right]-\mathbb{E}\left[\|\RRme{\bar x}^R-x_\eta^*\|^2\right]\right)-\tfrac{6\tilde \gamma \|\nabla f(x^*)\|}{\alpha}\\
&\times\left(\left(\tfrac{(2\eta^2\sigma_f^2+2\sigma_h^2)}{KS}+\tfrac{(N-S)4G^2}{NS}\right)+\tilde \gamma\left(\tfrac{9(L_h+\eta L_f)(2\eta^2\sigma_f^2+2\sigma_h^2)}{K\gamma_g^2}+\tfrac{18(L_h+\eta L_f)G^2}{{\gamma_g^2}}\right)\right)\\ &\le\mathbb{E}\left[(f(\bar{x}^{R})-f(x^*))\right]\le\tfrac{3}{\eta\tilde \gamma R}\left(\mathbb{E}\left[\|\RRme{\bar x}^{0}-x_\eta^*\|^2\right]-\mathbb{E}\left[\|\RRme{\bar x}^R-x_\eta^*\|^2\right]\right)\\
&+\tfrac{3\tilde \gamma}{\eta}\left(\tfrac{(2\eta^2\sigma_f^2+2\sigma_h^2)}{KS}+\tfrac{(N-S)4G^2}{NS}\right)+\tfrac{3\tilde \gamma^2}{\eta}\left(\tfrac{9(L_h+\eta L_f)(2\eta^2\sigma_f^2+2\sigma_h^2)}{K\gamma_g^2}+\tfrac{18(L_h+\eta L_f)G^2}{{\gamma_g^2}}\right).
\end{align*}

\end{proposition}

\Rme{\begin{proof}[Proof of Theorem~\ref{thm:thm3}]Invoking Proposition~\ref{Pro:Prop4}, the proof is similar to the proof of Theorem~\ref{thm:thm2}.
\end{proof}}

\Rme{\begin{remark}\em As we mentioned earlier, an interesting finding of this work is that our analysis leads to suitable rules for choosing $\eta$. Such a choice is algorithm-specific and depends on the underlying assumptions.
\end{remark}}


\subsection{Nonconvex Case}
\Rme{{In this subsection, we analyze IPIR-FedAvg method for solving problem~\eqref{problem: main problem-ncvx}, covering the nonconvex case of~$f$. We provide a detailed analysis that establishes Theorem~\ref{thm:thm ncvx}.}}

{\bf{\Rme{History of the method.}}} \far{We} define the history of Algorithm~\ref{algorithm:ncvx} as $\mathcal{F}_t\triangleq \mathcal{F}_{t-1}\cup\left(\cup_{i=1}^N \{\zeta_{i,0}^{t-1},\zeta_{i,0}^{t-1},\ldots,\zeta_{i,KR_{t-1}}^{t-1}\}\right)$, for all $k \geq 1$, and $\mathcal{F}_0 \triangleq \{x_0\}$. We also let $\mathbb{E}[\bullet\mid \mathcal{F}_t]$ denote the conditional expectation with respect to the filtration $\mathcal{F}_t$.

We should note that \Rme{$x^t_{\eta_t}$} is an inexact solution to the projection problem. Let us denote the error for this approximation at iteration $k$ of Algorithm~\ref{algorithm:ncvx} as \Rme{$e^t_{\eta_t}$}, and define \Rme{$e^t_{\eta_t}\triangleq x^t_{\eta_t}-\Pi_{X^*_h}[y^t]$}. Therefore, we can rewrite the update rule of Algorithm~\ref{algorithm:ncvx} as \Rme{$y^{t+1}:=y^t - \gamma (\nabla F(y^t) + \frac{1}{\lambda}e^t_{\eta_t})$}. In the inner-loop of Algorithm~\ref{algorithm:ncvx} we employ the \RRRme{StR-FedAvg} for solving the projection problem \eqref{problem:peojection problem}. So, based on Proposition~\ref{Prop:Prop2}, we establish the upper bound for \Rme{$\mathbb{E}[\|e^t_{\eta_t}\|^2\mid \mathcal{F}_t]=\mathbb{E}[\|{x}^{t}_{\eta_t}-x^{{t}*}\|^2\mid\mathcal{F}_t]$} for different cases in the following proposition, where $x^{{t}*}$ is the optimal solution to the projection problem at iteration $k$ of the outer-loop of Algorithm~\ref{algorithm:ncvx} ($\Pi_{X^*_h}[y^t]=x^{{t}*}$). Notably, the following result follows from Proposition~\ref{Prop:Prop2} in that the key differences lie in replacing $\mu_f$ and $L_f$ with $1$, applying Lemma~\ref{remark:bound in terms of the obj-ncvx}, and substituting $\bar{x}^R$ with \RRme{${\bar x}_{\eta_t}^{t}$}.
\begin{proposition}\em\label{Prop:Prop9}Consider problem~\eqref{problem:peojection problem}. Let $g$ be $1$-strongly convex and $h_i$ be convex for all $i\in[N]$. Let \Rme{$\theta:=\tfrac{1}{1-{0.5\eta \tilde \gamma_t}}$}. Invoking Theorem~\ref{thm:URS} and Proposition~\ref{Prop:Prop2}, and using \RRRme{StR-FedAvg} in Algorithm~\ref{algorithm:R-FedAvg and R-SCAFFOLD} as the inner-loop of Algorithm~\ref{algorithm:ncvx}, for all \Rme{$\gamma_{l_t} \le \tfrac{1}{16(L_h+\eta )^2(1+B_{ncvx}^2)K\gamma_{g_t}}$, $\gamma_{g_t}\ge1$} and $R_t\ge {16(L_h+\RRRme{\hat\eta})^2(1+B_{ncvx}^2)}$, \RRRme{where $\hat\eta=11$}, the following results hold.

\noindent [Case i] Suppose $X^*_h$ is $\alpha$-weak sharp with order $\kappa > 1$. \Rme{Then, for $\tilde \gamma_t:= \tfrac{1}{R_t^a}$ and $\eta_t:= \tfrac{p\RRme{\ln}(R_t)}{R_t^b}$, where $0< b<a\le1$ and $p\ge 1$, the following results hold. }
  \begin{align*} &\Rme{\mathbb{E}\left[\|{\RRme{\bar x}}^{t}_{\eta_t}-x^{{t}*}\|^2\mid\mathcal{F}_t\right] }\leq \tfrac{1}{ R_t^{{p}/{2}}-1}\mathbb{E}\left[\|\RRme{\bar x}^{0}-x^{{t}*}\|^2\mid\mathcal{F}_t\right]\\&+\tfrac{2}{p\RRme{\ln}(R_t)R_t^{a-b}}\left(\left(\tfrac{(2\eta^2\sigma_f^2+2\sigma_h^2)}{KS}+\tfrac{(N-S)4G_{ncvx}^2}{NS}\right)+\left(\tfrac{9(L_h+\eta )(2\eta^2\sigma_f^2+2\sigma_h^2)}{K\Rme{\gamma_{g_t}^2}\RRRme{R_t^{a}}}+\tfrac{18(L_h+\eta )G_{ncvx}^2}{{\Rme{\gamma_{g_t}^2}R_t^{a}}}\right)\right)\\
&+\tfrac{\|\nabla f(x^{t*})\|}{\alpha^{{1}/{\kappa}}}\left(\tfrac{p\RRme{\ln}(R_t)}{ R_t^b(R_t^{{p}/{2}}-1)}\mathbb{E}\left[\|\RRme{\bar x}^{0}-x^{{t}*}\|^2\mid\mathcal{F}_t\right]+\tfrac{2}{R_t^{a}}\left(\tfrac{(2\eta^2\sigma_f^2+2\sigma_h^2)}{KS}+\left(1-\tfrac{S}{N}\right)\tfrac{4}{S}G_{ncvx}^2\right)\right.\\
&\left.+\tfrac{2}{R_t^{2a}}\left(\tfrac{9(L_h+\eta )(2\eta^2\sigma_f^2+2\sigma_h^2)}{K\Rme{\gamma_{g_t}^2}}+\tfrac{18(L_h+\eta )G_{ncvx}^2}{{\Rme{\gamma_{g_t}^2}}}\right)+\tfrac{2p\RRme{\ln}(R_t)}{R_t^{b}} M\right)^\frac{1}{\kappa}.
\end{align*}
\RRRme{
\noindent [Case ii] If $X^*_h$ is $\alpha$-weak sharp with order $\kappa=1$, $\tilde \gamma_t:= \tfrac{p\RRme{\ln}(R_t)}{R_t}$, for some $p\ge 1$, and $\eta_t:=\eta\le{\alpha}/{2\|\nabla F_\lambda(x^{t*})\|}$, then 
 \begin{align*} &\mathbb{E}\left[\|\bar{x}^{t}_{\eta}-x^{{t}*}\|^2\mid\mathcal{F}_t\right] \leq  \tfrac{1}{ \eta(R_t^{{p}/{2}}-1)}\mathbb{E}\left[\|\bar{x}^{0}-x^{{t}*}\|^2\mid\mathcal{F}_t\right]\\&+\tfrac{2p\RRme{\ln}(R_t)}{\eta R_t}\left(\left(\tfrac{(2\eta^2\sigma_f^2+2\sigma_h^2)}{KS}+\tfrac{(N-S)4G_{ncvx}^2}{NS}\right)+\left(\tfrac{p\ln(R_t)9(L_h+\eta )(2\eta^2\sigma_f^2+2\sigma_h^2)}{K\gamma_g^2R_t}+\tfrac{p\ln(R_t)18(L_h+\eta )G_{ncvx}^2}{{\gamma_g^2R_t}}\right)\right).
\end{align*}}
\end{proposition}

In the following result, we establish the convergence rates for Algorithm~\ref{algorithm:ncvx} in addressing problem~\eqref{problem: main problem-ncvx}.

 \begin{proof}[Proof of Theorem~\ref{thm:thm ncvx}]
 \hspace{0.1 \in}\noindent [Case i] \noindent (i-1) 
From the $L_f+\frac{2}{\lambda}$-smoothness of $\RRme{F_\lambda}$, we have $\RRme{F_\lambda}(y^{t+1}) \leq \RRme{F_\lambda}(y^t) +\nabla \RRme{F_\lambda}(y^\mje{t})^{\fyy{\top}}(y^{t+1}-y^t) + \tfrac{L_f\lambda+2}{2\lambda}\|y^{t+1}-y^t\|^2$. We should note that \Rme{$x^{t}_{\eta_t}$} is an inexact solution to the projection problem. Based on the approximation error at iteration $t$ of Algorithm~\ref{algorithm:ncvx} denoted by \Rme{$e^{t}_{\eta_t}$}, and invoking the update rule of this algorithm, we obtain
\begin{align*}
 \RRme{F_\lambda}(y^{t+1}) \le &\RRme{F_\lambda}(y^t) +\nabla \RRme{F_\lambda}(y^t)^{\fyy{\top}}( -\gamma (\nabla \RRme{F_\lambda}(y^t) + \RRme{\tfrac{1}{\lambda}} \Rme{e^{t}_{\eta_t}})) + \tfrac{L_f\lambda+2}{2\lambda}\|- \gamma (\nabla \RRme{F_\lambda}(y^t) +\RRme{ \tfrac{1}{\lambda}}\Rme{ e^{t}_{\eta_t}})\|^2\\
  \le& \RRme{F_\lambda}(y^t) -\gamma \|\nabla \RRme{F_\lambda}(y^t)\|^2+\tfrac{\gamma}{2\RRme{\nu}} \|\nabla \RRme{F_\lambda}(y^t)\|^2+\tfrac{\RRme{\nu}\gamma}{2\RRme{\lambda^2}}\|\Rme{e^{t}_{\eta_t}}\|^2+ \tfrac{\gamma^2(L_f\lambda+2)}{\lambda}\|\nabla \RRme{F_\lambda}(y^t)\|^2\\
 &+\tfrac{\gamma^2(L_f\lambda+2)}{\lambda^3}\|\Rme{e^{t}_{\eta_t}}\|^2,
 \end{align*}
where $\RRme{\nu}>0$ is a positive constant. Setting $\RRme{\nu}=2$, we obtain
$\RRme{F_\lambda}(y^{t+1})  \le \RRme{F_\lambda}(y^t) -{\gamma} \left(\tfrac{5}{4}- \tfrac{\gamma(L_f\lambda+2)}{\lambda}\right)\|\nabla \RRme{F_\lambda}(y^t)\|^2+\left(\tfrac{\gamma}{\RRme{\lambda^2}}+\tfrac{\gamma^2(L_f\lambda+2)}{\lambda^3}\right)\|\Rme{e^{t}_{\eta_t}}\|^2.$
By setting $\gamma \le \tfrac{3\lambda}{4(L_f\lambda+2)}$, we have $\left(\tfrac{5}{4}- \tfrac{\gamma(L_f\lambda+2)}{\lambda}\right)\le \tfrac{1}{2}$. Therefore, we obtain
$\RRme{F_\lambda}(y^{t+1})   \leq \RRme{F_\lambda}(y^t) -\tfrac{\gamma}{2}\|\nabla \RRme{F_\lambda}(y^t)\|^2+\left(\tfrac{\gamma}{\lambda}+\tfrac{\gamma^2(L_f\lambda+2)}{\lambda^3}\right)\|\Rme{e^{t}_{\eta_t}}\|^2.$ Rearranging the terms, and summing both sides over $t=1,2,\ldots, T$ where $T\geq 1$, we obtain
$$
\textstyle{\sum_{t=1}^{T}}\|\nabla \RRme{F_\lambda}(y^t)\|^2 \leq \tfrac{2}{\gamma}\left(\RRme{F_\lambda}(y^0)- \RRme{F_\lambda}(y^{T})\right)+\left(\tfrac{2}{\RRme{\lambda^2}}+\tfrac{2\gamma(L_f\lambda+2)}{\lambda^3}\right)\sum_{t=1}^{T}\|\Rme{e^{t}_{\eta_t}}\|^2.$$
Taking conditional expectations on both sides \far{and} invoking Jensen's inequality, we obtain
\begin{align}
\tfrac{1}{T}\textstyle{\sum_{t=1}^{T}}\mathbb{E}[\|\nabla \RRme{F_\lambda}(y^t)\|^2\mid \mathcal{F}_t] \!\leq\!  \frac{2\gamma^{-1}\left(\RRme{F_\lambda}(\RRme{y}^0)- \RRme{F^*_\lambda}\right)}{T}+\left(\tfrac{2}{\RRme{\lambda^2}}+\tfrac{2\gamma(L_f\lambda+2)}{\lambda^3}\right)\frac{\sum_{t=1}^{T}\mathbb{E}[\|\Rme{e^{t}_{\eta_t}}\|^2\mid \mathcal{F}_t]}{T}.\label{eq:ncnvx}
\end{align}
In the inner loop of Algorithm~\ref{algorithm:ncvx} we employ the \RRRme{StR-FedAvg} for solving the projection problem. By applying Proposition~\ref{Prop:Prop9} and substituting the upper bound for \Rme{$\mathbb{E}[\|{\RRme{\bar x}}^t_{\eta_t}-x^{t*}\|^2\mid \mathcal{F}_t]=\mathbb{E}\left[\|e^{t}_{\eta_t}\|^2\mid \mathcal{F}_t\right] $} into the preceding inequality, for \Rme{$\tilde \gamma_t:= \tfrac{1}{R_t^a}$ and $\eta_t:= \tfrac{p\RRme{\ln}(R_t)}{R_t^b}$, for $p=2,\RRRme{a=\tfrac{7}{12},b=\tfrac{5}{12}}$, and $R_t=t$,} \RRme{and defining $\hat\eta\triangleq\max_t\eta_t=11$, we then take another expectation to obtain}
\begin{align*}
&\mathbb{E}[\|\nabla \RRme{F_\lambda}(y^{T^*})\|^2] \leq \tfrac{2\gamma^{-1}\left(\RRme{F_\lambda}(y^0)- \RRme{F^*_\lambda}\right)}{T}+\left(\tfrac{2}{\RRme{\lambda^2}}+\tfrac{2\gamma(L_f\lambda+2)}{\lambda^3}\right)\mathbb{E}\left[\|\RRme{\bar x}^{0}-x^{{t}*}\|^2\right]\RRRme{{\textstyle\sum_{t=2}^{T+1}\tfrac{1}{ t-1}}/{T}}\\
&+\left(\tfrac{2}{\RRme{\lambda^2}}+\tfrac{2\gamma(L_f\lambda+2)}{\lambda^3}\right)\left(\tfrac{(2\RRme{\hat\eta}^2\sigma_f^2+2\sigma_h^2)}{KS}+\left(1-\tfrac{S}{N}\right)\tfrac{4}{S}G_{ncvx}^2\right)\RRRme{{\textstyle\sum_{t=2}^{T+1}\tfrac{1}{\RRme{\ln}(t)t^{\RRRme{{1}/{6}}}}}/{T}}\\
&+\left(\tfrac{2}{\RRme{\lambda^2}}+\tfrac{2\gamma(L_f\lambda+2)}{\lambda^3}\right)\left(\tfrac{9(L_h+\RRme{\hat\eta} )(2\RRme{\hat\eta}^2\sigma_f^2+2\sigma_h^2)}{K\Rme{\gamma_{g_t}^2}}+\tfrac{18(L_h+\RRme{\hat\eta} )G_{ncvx}^2}{{\Rme{\gamma_{g_t}^2}}}\right)\RRRme{{\textstyle\sum_{t=2}^{T+1}\tfrac{1}{\RRme{\ln}(t)t^{\RRRme{3/4}}}}/{T}}\\
&+\left(\tfrac{2}{\RRme{\lambda^2}}+\tfrac{2\gamma(L_f\lambda+2)}{\lambda^3}\right)\tfrac{\|\nabla f(x^{t*})\|}{\alpha^\frac{1}{\kappa}}\left(\mathbb{E}\left[\|\RRme{\bar x}^{0}-x^{{t}*}\|^2\right]\right)^{\tfrac{1}{\kappa}}\RRRme{{\textstyle\sum_{t=2}^{T+1}\left(\tfrac{2\RRme{\ln}(t)}{ t^\RRRme{{5}/{12}}(t-1)}\right)^\frac{1}{\kappa}}/{T}}\\
&+\left(\tfrac{2}{\RRme{\lambda^2}}+\tfrac{2\gamma(L_f\lambda+2)}{\lambda^3}\right)\tfrac{\|\nabla f(x^{t*})\|}{\alpha^\frac{1}{\kappa}}\left(\tfrac{(2\RRme{\hat\eta}^2\sigma_f^2+2\sigma_h^2)}{KS}+\left(1-\tfrac{S}{N}\right)\tfrac{4}{S}G_{ncvx}^2\right)^\frac{1}{\kappa}\RRRme{{\textstyle\sum_{t=1}^{T}\left({2}/{t^\RRRme{{7}/{12}}}\right)^\frac{1}{\kappa}}/{T}}\\
&+\left(\tfrac{2}{\RRme{\lambda^2}}+\tfrac{2\gamma(L_f\lambda+2)}{\lambda^3}\right)\tfrac{\|\nabla f(x^{t*})\|}{\alpha^\frac{1}{\kappa}}\left(\tfrac{9(L_h+\RRme{\hat\eta} )(2\RRme{\hat\eta}^2\sigma_f^2+2\sigma_h^2)}{K\Rme{\gamma_{g_t}^2}}+\tfrac{18(L_h+\RRme{\hat\eta} )G_{ncvx}^2}{{\Rme{\gamma_{g_t}^2}}}\right)^\frac{1}{\kappa}\RRRme{{\textstyle\sum_{t=1}^{T}\left({2}/{t^\RRRme{{7}/{6}}}\right)^\frac{1}{\kappa}}/{T}}\\
&+M^\frac{1}{\kappa}\RRRme{{\textstyle\sum_{t=1}^{T}\left(\tfrac{4\RRme{\ln}(t)}{t^\RRRme{{5}/{12}}}\right)^\frac{1}{\kappa}}/{T}}.
\end{align*}
By substituting the summations with their upper bounds, we obtain
\begin{align*}
&\mathbb{E}[\|\nabla \RRme{F_\lambda}(y^{T^*})\|^2] \leq  \tfrac{2\gamma^{-1}\left(\RRme{F_\lambda}(y^0)- \RRme{F^*_\lambda}\right)}{T}+\left(\tfrac{2}{\RRme{\lambda^2}}+\tfrac{2\gamma(L_f\lambda+2)}{\lambda^3}\right)\mathbb{E}\left[\|\RRme{\bar x}^{0}-x^{{t}*}\|^2\right]\tfrac{\RRme{1+\ln}(T)}{T}\\
&+\left(\tfrac{2}{\RRme{\lambda^2}}+\tfrac{2\gamma(L_f\lambda+2)}{\lambda^3}\right)\left(\tfrac{(2\RRme{\hat\eta}^2\sigma_f^2+2\sigma_h^2)}{KS}+\left(1-\tfrac{S}{N}\right)\tfrac{4}{S}G_{ncvx}^2\right)\RRRme{\tfrac{6}{5T^\frac{1}{6}\ln(T)}}\\
&+\left(\tfrac{2}{\RRme{\lambda^2}}+\tfrac{2\gamma(L_f\lambda+2)}{\lambda^3}\right)\left(\tfrac{9(L_h+\RRme{\hat\eta} )(2\RRme{\hat\eta}^2\sigma_f^2+2\sigma_h^2)}{K\Rme{\gamma_{g_t}^2}}+\tfrac{18(L_h+\RRme{\hat\eta} )G_{ncvx}^2}{{\Rme{\gamma_{g_t}^2}}}\right)\RRRme{\tfrac{3}{2T^{3/4}\RRme{\ln}(T)}}+\RRRme{\tfrac{(4M)^{1/\kappa}(24\kappa-5)}{12\kappa-5}\tfrac{(\ln T)^{1/\kappa}}{T^{5/(12\kappa)}}}\\
&+\left(\tfrac{2}{\RRme{\lambda^2}}+\tfrac{2\gamma(L_f\lambda+2)}{\lambda^3}\right)\tfrac{\|\nabla f(x^{t*})\|}{\alpha^\frac{1}{\kappa}}\left(\mathbb{E}\left[\|\RRme{\bar x}^{0}-x^{{t}*}\|^2\right]\right)^{\tfrac{1}{\kappa}}\RRRme{
\tfrac{2(4)^{1/\kappa}(\ln T+1)^{1/\kappa}}{T^{17/12\kappa}}}\\
&+\left(\tfrac{2}{\RRme{\lambda^2}}+\tfrac{2\gamma(L_f\lambda+2)}{\lambda^3}\right)\tfrac{\|\nabla f(x^{t*})\|}{\alpha^\frac{1}{\kappa}}\left(\tfrac{2(2\RRme{\hat\eta}^2\sigma_f^2+2\sigma_h^2)}{KS}+2\left(1-\tfrac{S}{N}\right)\tfrac{4}{S}G_{ncvx}^2\right)^\frac{1}{\kappa}\RRRme{\tfrac{2^{1/\kappa}}{1-\tfrac{7}{12\kappa}}\tfrac{1}{T^{7/(12\kappa)}}}\\
&+\left(\tfrac{2}{\RRme{\lambda^2}}+\tfrac{2\gamma(L_f\lambda+2)}{\lambda^3}\right)\tfrac{\|\nabla f(x^{t*})\|}{\alpha^\frac{1}{\kappa}}\left(\tfrac{18(L_h+\RRme{\hat\eta})(2\RRme{\hat\eta}^2\sigma_f^2+2\sigma_h^2)}{K\Rme{\gamma_{g_t}^2}}+\tfrac{36(L_h+\RRme{\hat\eta} )G_{ncvx}^2}{{\Rme{\gamma_{g_t}^2}}}\right)^\frac{1}{\kappa}\RRRme{\tfrac{1}{T^{1/\kappa}}}.
\end{align*}

\noindent (i-2) For the feasibility bound, invoking the $\alpha$-weak sharpness property of $X^*_h$ with order $\kappa \geq 1$ we have
\begin{align} \Rme{\mathbb{E}[\hbox{dist}({\RRme{\bar x}}^t_{\eta_t}, X^*_h)\mid \mathcal{F}_t]\le \tfrac{1}{\alpha}(\mathbb{E}[h({\RRme{\bar x}}^{t}_{\eta_t})\mid \mathcal{F}_t ]- h^*)^{1/\kappa}}.\label{eq:feasibility}
\end{align}
By applying the bound for \Rme{$(\mathbb{E}[h({\RRme{\bar x}}^{t}_{\eta_t}) ]- h^*)$} from Proposition~\ref{Prop:Prop2} and adapting the notation to align with the nonconvex case, we obtain
\begin{align*}
\Rme{\mathbb{E}\left[h({\RRme{\bar x}}^t_{\eta_t})-h^*\mid \mathcal{F}_t\right]}\le&\tfrac{1}{\Rme{ \tilde \gamma_{t}}\sum_{r=1}^{R_t}\theta^{r}}\mathbb{E}\left[\|\RRme{\bar x}^{0}-x^{t*}\|^2\mid \mathcal{F}_t\right]+{\Rme{\tilde \gamma_t}}\left(\tfrac{(2\eta^2\sigma_f^2+2\sigma_h^2)}{KS}+\left(1-\tfrac{S}{N}\right)\tfrac{4G_{ncvx}^2}{S}\right)\\
&+\Rme{{\tilde \gamma_t^2}}\left(\tfrac{9(L_h+\eta )(2\eta^2\sigma_f^2+2\sigma_h^2)}{K\Rme{\gamma_{g_t}^2}}+\tfrac{18(L_h+\eta )G_{ncvx}^2}{{\Rme{\gamma_{g_t}^2}}}\right)+2\Rme{\eta_t} M.
\end{align*}
Based on Theorem~\ref{thm:thm2}, for \Rme{$\tilde \gamma_t:= \tfrac{1}{R_t^a}$ and $\eta_t:= \tfrac{p\ln(R_t)}{R_t^b}$}, we obtain
\begin{align*}
&\Rme{\mathbb{E}\left[h({\RRme{\bar x}}^t_{\eta_t})-h^*\mid \mathcal{F}_t\right]}\le\tfrac{p\RRme{\ln}(R_t)}{2R_t^b\left(R_t^{{p}/{2}}-1\right)}\mathbb{E}\left[\|\RRme{\bar x}^{0}-x^{t*}\|^2\mid \mathcal{F}_t\right]+\tfrac{p\RRme{\ln}(R_t)}{R_t^{b}}2 M\\
&+\tfrac{1}{R_t^{a}}\left(\tfrac{(2\eta^2\sigma_f^2+2\sigma_h^2)}{KS}+\left(1-\tfrac{S}{N}\right)\tfrac{4G_{ncvx}^2}{S}\right)+\tfrac{1}{R_t^{2a}}\left(\tfrac{9(L_h+\eta )(2\eta^2\sigma_f^2+2\sigma_h^2)}{K\Rme{\gamma_{g_t}^2}}+\tfrac{18(L_h+\eta )G_{ncvx}^2}{\Rme{{\gamma_{g_t}^2}}}\right).
\end{align*}
\RRRme{By substituting this upper bound into \eqref{eq:feasibility}, \RRme{setting} $p=2,a=\tfrac{7}{12},b=\tfrac{5}{12}$, and $R_t=t$, \RRme{defining $\hat\eta\triangleq\max_t\eta_t=\frac{6}{\mathrm{e}  \mu_f^b}$,} and then taking another expectation, we obtain
\begin{align*}
\Rme{\mathbb{E}[\hbox{dist}({\RRme{\bar x}}^t_{\eta}, X^*_h)]}\le&\tfrac{1}{\alpha}\left(\tfrac{\RRme{\ln}(t)}{t^{{5}/{12}}\left(t-1\right)}\mathbb{E}\left[\|\RRme{\bar x}^{0}-x^{t*}\|^2\right]+\tfrac{1}{t^{{7}/{12}}}\left(\tfrac{(2\RRme{\hat\eta}^2\sigma_f^2+2\sigma_h^2)}{KS}+\left(1-\tfrac{S}{N}\right)\tfrac{4G_{ncvx}^2}{S}\right)\right.\\
&\left.+\tfrac{1}{t^{{7}/{6}}}\left(\tfrac{9(L_h+\RRme{\hat\eta} )(2\RRme{\hat\eta}^2\sigma_f^2+2\sigma_h^2)}{K\Rme{\gamma_{g_t}^2}}+\tfrac{18(L_h+\RRme{\hat\eta} )G_{ncvx}^2}{{\Rme{\gamma_{g_t}^2}}}\right)+\tfrac{\RRme{\ln}(t)}{t^{{5}/{12}}}4 M\right)^{1/\kappa}.
\end{align*}
\noindent [Case ii] \noindent (i-1) By invoking~\eqref{eq:ncnvx} and applying [Case ii] of Proposition~\ref{Prop:Prop9} with $p=2$ and $R_t=t$, \RRme{defining $\hat\eta=11$,} and then substituting the resulting bound into~\eqref{eq:ncnvx} and taking expectation once more, we obtain

\begin{align*}
\mathbb{E}[\|\nabla \RRme{F_\lambda}(y^{T^*})\|^2]\leq\!  &\frac{2\gamma^{-1}\left(\RRme{F_\lambda}(\RRme{y}^0)- \RRme{F^*_\lambda}\right)}{T}+\left(\tfrac{2}{\RRme{ \eta\lambda^2}}+\tfrac{2\gamma(L_f\lambda+2)}{ \eta\lambda^3}\right)\mathbb{E}\left[\|\bar{x}^{0}-x^{{t}*}\|^2\right]\tfrac{\sum_{t=2}^{T+1}\tfrac{1}{{t}-1}}{T}\\
&+\left(\tfrac{8}{\RRme{ \eta\lambda^2}}+\tfrac{8\gamma(L_f\lambda+2)}{ \eta\lambda^3}\right)\left(\tfrac{(2\hat\eta^2\sigma_f^2+2\sigma_h^2)}{KS}+\tfrac{(N-S)4G_{ncvx}^2}{NS}\right)\tfrac{\sum_{t=2}^{T+1}\tfrac{\ln(t)}{{t}}}{T}\\
&+\left(\tfrac{16}{\RRme{ \eta\lambda^2}}+\tfrac{16\gamma(L_f\lambda+2)}{ \eta\lambda^3}\right)\left(\tfrac{9(L_h+\hat\eta )(2\hat\eta^2\sigma_f^2+2\sigma_h^2)}{K\gamma_g^2}+\tfrac{18(L_h+\hat\eta )G_{ncvx}^2}{{\gamma_g^2}}\right)\tfrac{\sum_{t=2}^{T+1}\tfrac{\left(\ln(t)\right)^2}{{t^2}}}{T}.
\end{align*}
Substituting the upper bounds of the summations yields
\begin{align*}
\mathbb{E}[\|\nabla \RRme{F_\lambda}(y^{T^*})\|^2]\leq\!  &\frac{2\gamma^{-1}\left(\RRme{F_\lambda}(\RRme{y}^0)- \RRme{F^*_\lambda}\right)}{T}+\left(\tfrac{2}{\RRme{ \eta\lambda^2}}+\tfrac{2\gamma(L_f\lambda+2)}{ \eta\lambda^3}\right)\mathbb{E}\left[\|\bar{x}^{0}-x^{{t}*}\|^2\right]\tfrac{2\ln(T)}{T}\\
&+\left(\tfrac{8}{\RRme{ \eta\lambda^2}}+\tfrac{8\gamma(L_f\lambda+2)}{ \eta\lambda^3}\right)\left(\tfrac{(2\hat\eta^2\sigma_f^2+2\sigma_h^2)}{KS}+\tfrac{(N-S)4G_{ncvx}^2}{NS}\right)\tfrac{\left(\ln(T+2)\right)^2}{T}\\
&+\left(\tfrac{16}{\RRme{ \eta\lambda^2}}+\tfrac{16\gamma(L_f\lambda+2)}{ \eta\lambda^3}\right)\left(\tfrac{9(L_h+\hat\eta )(2\hat\eta^2\sigma_f^2+2\sigma_h^2)}{K\gamma_g^2}+\tfrac{18(L_h+\hat\eta )G_{ncvx}^2}{{\gamma_g^2}}\right)\tfrac{2}{T}.
\end{align*}

\noindent (ii-2) To derive the feasibility bound, we invoke the $\alpha$-weak sharpness property of $X^*_h$ with order $\kappa = 1$, which yields
\begin{align} \Rme{\mathbb{E}[\hbox{dist}({\RRme{\bar x}}^t_{\eta_t}, X^*_h)\mid \mathcal{F}_t]\le \tfrac{1}{\alpha}(\mathbb{E}[h({\RRme{\bar x}}^{t}_{\eta_t})\mid \mathcal{F}_t ]- h^*)}.\label{eq:new we discussed}
\end{align}
Applying the bound for \Rme{$(\mathbb{E}[h({\RRme{\bar x}}^{t}_{\eta_t}) ]- h^*$} from Proposition~\ref{Prop:Prop2} and adapting the notation to the nonconvex case yields
\begin{align*}
\mathbb{E}[h({\RRme{\bar x}}^{t}_{\eta_t})\mid \mathcal{F}_t ]- h^*)\le&\tfrac{2}{ \tilde \gamma_t\sum_{r=1}^{R}\theta^{r}}\mathbb{E}\left[\|\RRme{\bar x}^{0}-x^{t*}\|^2\mid \mathcal{F}_t \right]+{2\tilde \gamma_t}\left(\tfrac{(2\eta^2\sigma_f^2+2\sigma_h^2)}{KS}+\tfrac{4G^2(N-S)}{NS}\right)\\
&+{2\tilde \gamma_t^2}\left(\tfrac{9(L_h+\eta L_f)(2\eta^2\sigma_f^2+2\sigma_h^2)}{K\gamma_{g_t}^2}+\tfrac{18(L_h+\eta L_f)G^2}{{\gamma_{g_t}^2}}\right).
\end{align*}
Based on Theorem~\ref{thm:thm2}, let $\tilde{\gamma}_t := \tfrac{p\RRme{\ln}(R_t)}{R_t}$ with $p=2$ and $R_t=t$, and choose $\eta_t := \eta \le \alpha/2\|\nabla F_\lambda(x^{t*})\|$. \RRme{Define $\hat{\eta}=11$.} Substituting this bound into~\eqref{eq:new we discussed} and taking another expectation yields
\begin{align*}
\mathbb{E}[\hbox{dist}({\RRme{\bar x}}^t_{\eta_t}, X^*_h)]\le&\tfrac{\eta}{ \alpha t}\mathbb{E}\left[\|\RRme{\bar x}^{0}-x^{t*}\|^2 \right]+\tfrac{4\RRme{\ln}(t)}{t}\left(\tfrac{(2\hat\eta^2\sigma_f^2+2\sigma_h^2)}{KS}+\tfrac{4G^2(N-S)}{NS}\right)\\
&+\tfrac{8\left(\RRme{\ln}(t)\right)^2}{t^2}\left(\tfrac{9(L_h+\hat\eta L_f)(2\hat\eta^2\sigma_f^2+2\sigma_h^2)}{K\gamma_{g_t}^2}+\tfrac{18(L_h+\hat\eta L_f)G^2}{{\gamma_{g_t}^2}}\right).
\end{align*}

}
\end{proof}
\begin{remark}\em
Although Case~[i] of Theorem~\ref{thm:thm ncvx} assumes that $X_h^*$ is $\alpha$-weak sharp of order $\kappa > 1$, the exact value of $\kappa$ is not required. In particular, Algorithm~\ref{algorithm:ncvx} does not require knowledge of $\kappa$.
\end{remark}
 \section{Numerical \far{e}xperiments}
In this section, we present numerical to validate our theoretical findings and to demonstrate the efficiency of the proposed algorithms. 
\subsection{Over-parameterized learning with a convex regularizer}
We present experiments on a variant of sparse over-parameterized regression, where the goal is to compute the sparsest solution among multiple optimal solutions, we define the problem as
\begin{align}\label{prob:l1}
&\min_x \quad \|x\|_1 \quad\text{s.t.} \quad x\in  \text{arg}\min_{y} h(y) = \tfrac{1}{2N}\textstyle{\sum_{i=1}^N}\textstyle{\sum_{\tilde{\ell}\in\tilde{\mathcal{D}}_i}}\|U_{i,\tilde{\ell}}y-v_{i,\tilde{\ell}}\|^2 ,
\end{align}
where $N$ denotes the number of clients, $x,y \in\mathbb{R}^{n}$, $U_{i,\tilde{\ell}}$ and $v_{i,\tilde{\ell}}$ are the $\tilde{\ell}$th input and output data of client $i$'s local training dataset $\tilde{\mathcal{D}}_i$, respectively. We use {the Wiki Math Essential, \mje{MNIST, and CIFAR-10} dataset\mje{s}} and consider $N=10$ agents.

\noindent {\bf Moreau envelope of $\|x\|_1$.} Note that the $\ell_1$ norm is nondifferentiable, so we consider the Moreau smoothing of $\|x\|_1$, defined as $M^\mu_{\ell_1}(x) \triangleq \min_y \{\|x\|_1 + \frac{1}{2\mu}\|x-y\|^2\}$. By invoking the results from \cite{beck2017first}, we obtain an explicit expression: $M^\mu_{\ell_1}(x) = \sum_{i=1}^nH_\mu(x_i)$, where $H_\mu(\bullet)$ denotes the Huber function. \me{Moreover}, the gradient of $M^\mu_{\ell_1}(x)$ \me{is given by} $\nabla M^\mu_{\ell_1}(x) = \frac{1}{\mu}(x-\hbox{prox}^\mu_{\ell_1}(x))$, where $\hbox{prox}^\mu_{\ell_1}(x) = \text{sign}(x) \odot\max\{{\bf 0}_n, |x|-\mu\mathbf{1}_n\} \hbox{ for all }x \in \mathbb{R}^n$, \me{and} $\odot$ denotes component-wise product of two vectors. The \me{outer-level} of problem \eqref{prob:l1} now becomes $f(x)=M^\mu_{\ell_1}(x)$.

\noindent {\bf Experiments on different local steps $K$.}  Follow our theory for convex \me{outer-level}, we set local stepsize $\gamma_l = \tfrac{1}{(R+\Gamma)^a}$, regularization parameter $\eta = \tfrac{1}{(R+\Gamma)^b}$ and global stepsize $\gamma_g=\sqrt{N}$, where $R$ is the total communication rounds, $a=\tfrac{1}{2}$, $b=\tfrac{1}{4}$ and $\Gamma>0$. We show the impact of local steps $K$ on the convergence, where we choose $K\in\{1,5,10,20\}$. We use three metrics to demonstrate our results, $\mathbb{E}[\|f(\bar x_r)- f(\bar x_{r-1})\|]$, $\mathbb{E}[f(\bar x_r)]$ and $\mathbb{E}[h(\bar x_r)]$, where $\bar x_r$ denotes the average iterate at communication round $r$. We also define $\bar x_{-1}=:0$ in order to have $\mathbb{E}[\|f(\bar x_r)- f(\bar x_{r-1})\|]$ well-defined for all $r\geq 0$. Note that $\mathbb{E}[f(\bar x_r)]$ and $\mathbb{E}[h(\bar x_r)]$ captures the performance of \me{outer} and \fyy{inner-level} \me{problems}, respectively. According to our theory, $\mathbb{E}[f(\bar x_r)]$ could be decreasing or increasing, therefore we also employ the metric $\mathbb{E}[\|f(\bar x_r)- f(\bar x_{r-1})\|]$ to demonstrate the convergence for the \fyy{outer-level} problem.

\noindent {\bf Results and insights.} The results of this experiment are shown in Figure \ref{fig:IRFedAvg:l1}. As we can see from both plots, given the same communication budget, our algorithms perform better under larger $K$, which shows the benefits of federated methods in solving distributed over-parameterized regression problems.

\begin{table}[htb]
\setlength{\tabcolsep}{0pt}
\centering{
\begin{tabular}{c || c  c  c}
{\footnotesize Setting\ \ }& {WikiMath} & {MNIST} & {CIFAR-10}\\
\hline\\

\rotatebox[origin=c]{90}{{\footnotesize {$\mathbb{E}[f(\bar x_r)]$}}}
&
\begin{minipage}{.3\columnwidth}
\includegraphics[width=1.0\columnwidth]{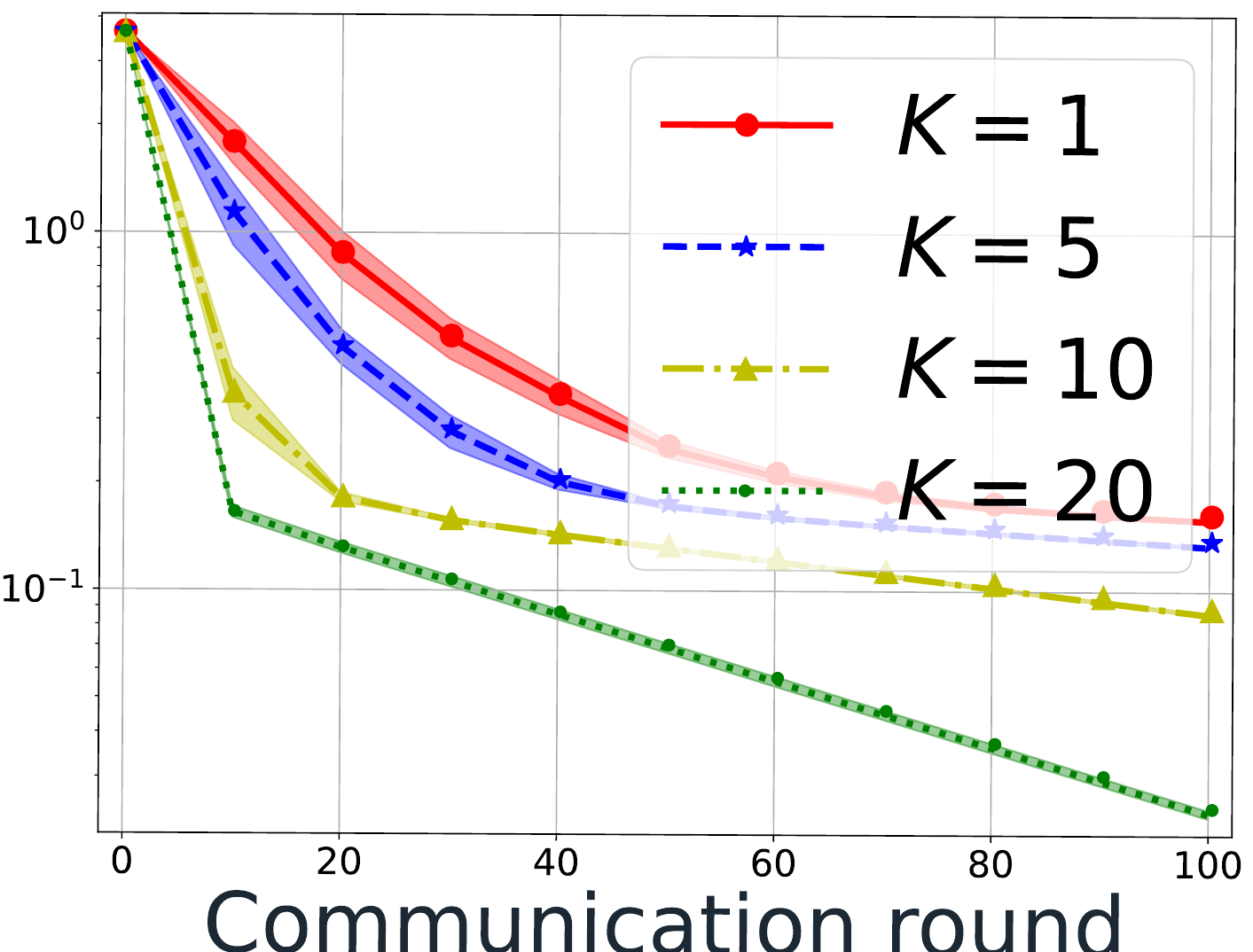}
\end{minipage}
&
\begin{minipage}{.3\columnwidth}
\includegraphics[width=1.0\columnwidth]{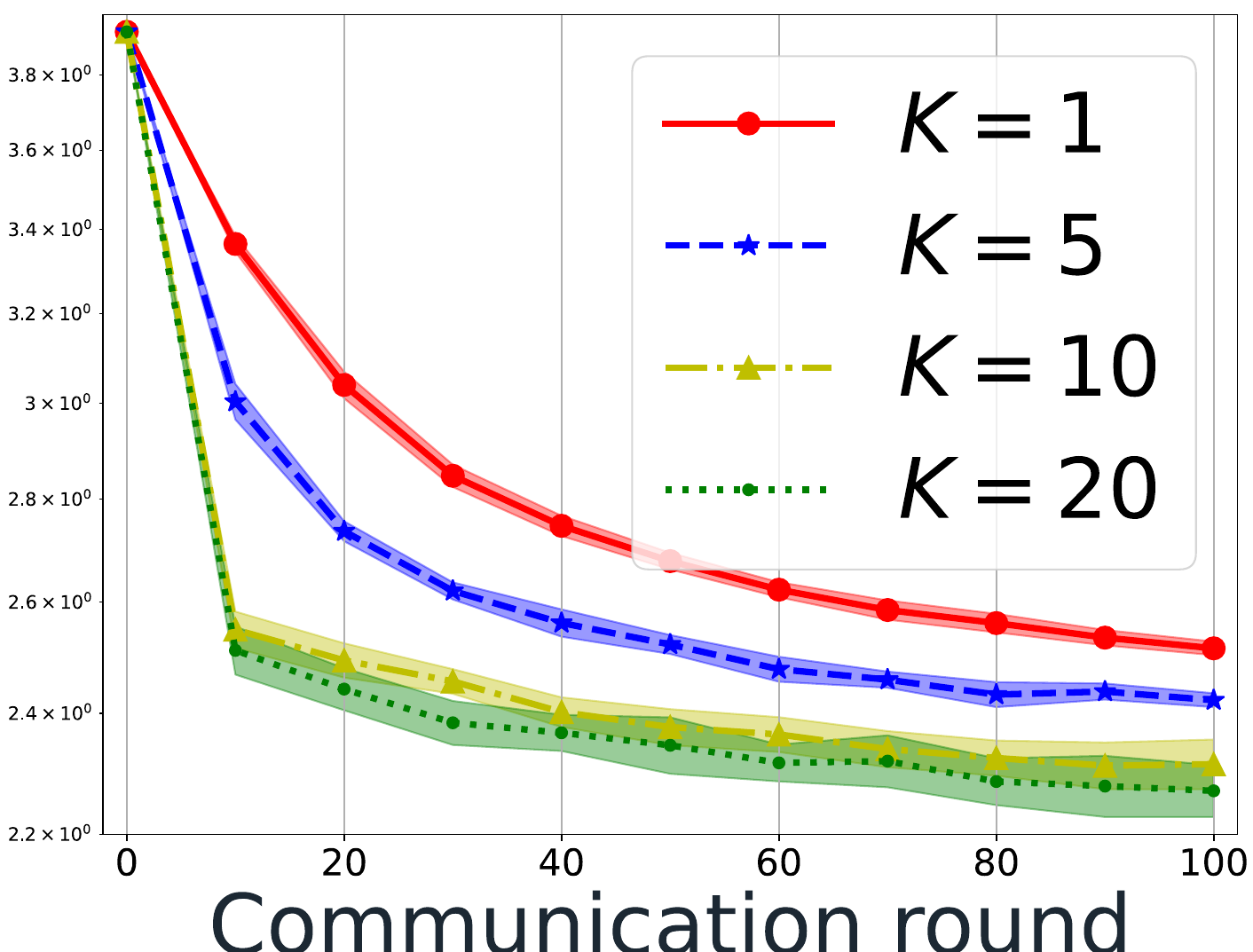}
\end{minipage}
&
\begin{minipage}{.3\columnwidth}
\includegraphics[width=1.0\columnwidth]{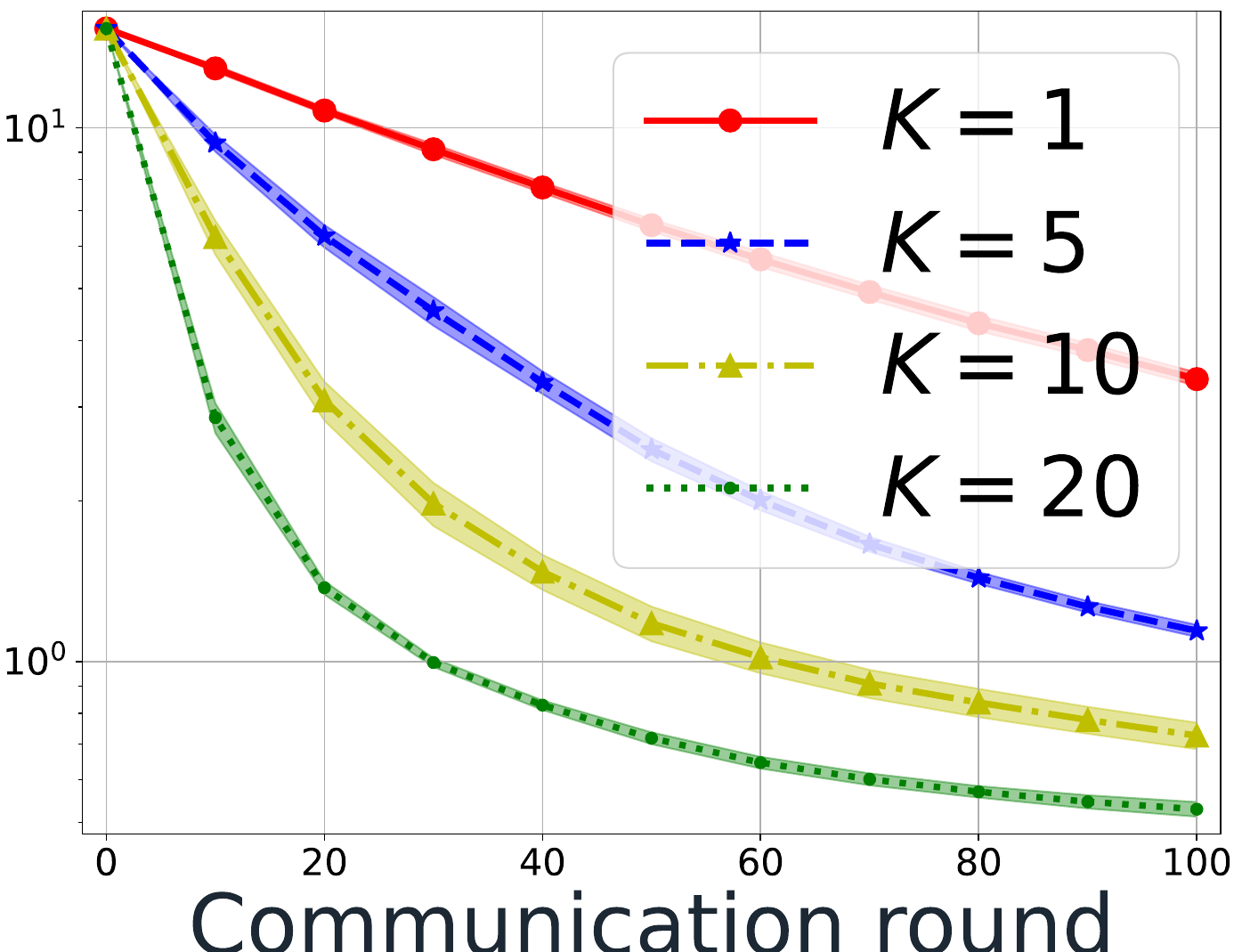}
\end{minipage}
\\

\hbox{}& & &\\
\hline\\

\rotatebox[origin=c]{90}{{\footnotesize {$\mathbb{E}[h(\bar x_r)]$}}}
&
\begin{minipage}{.3\columnwidth}
\includegraphics[width=1.0\columnwidth]{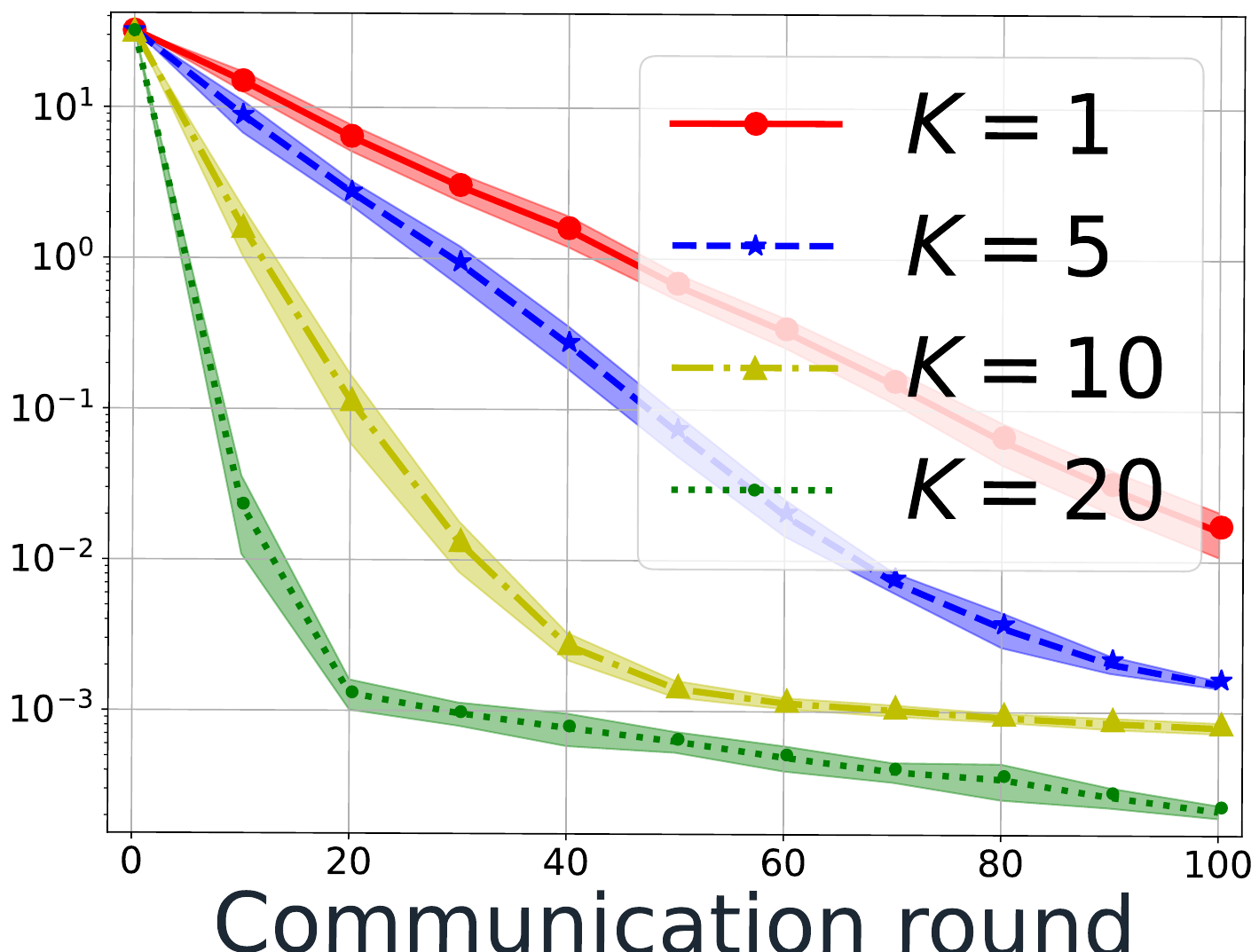}
\end{minipage}
&
\begin{minipage}{.3\columnwidth}
\includegraphics[width=1.0\columnwidth]{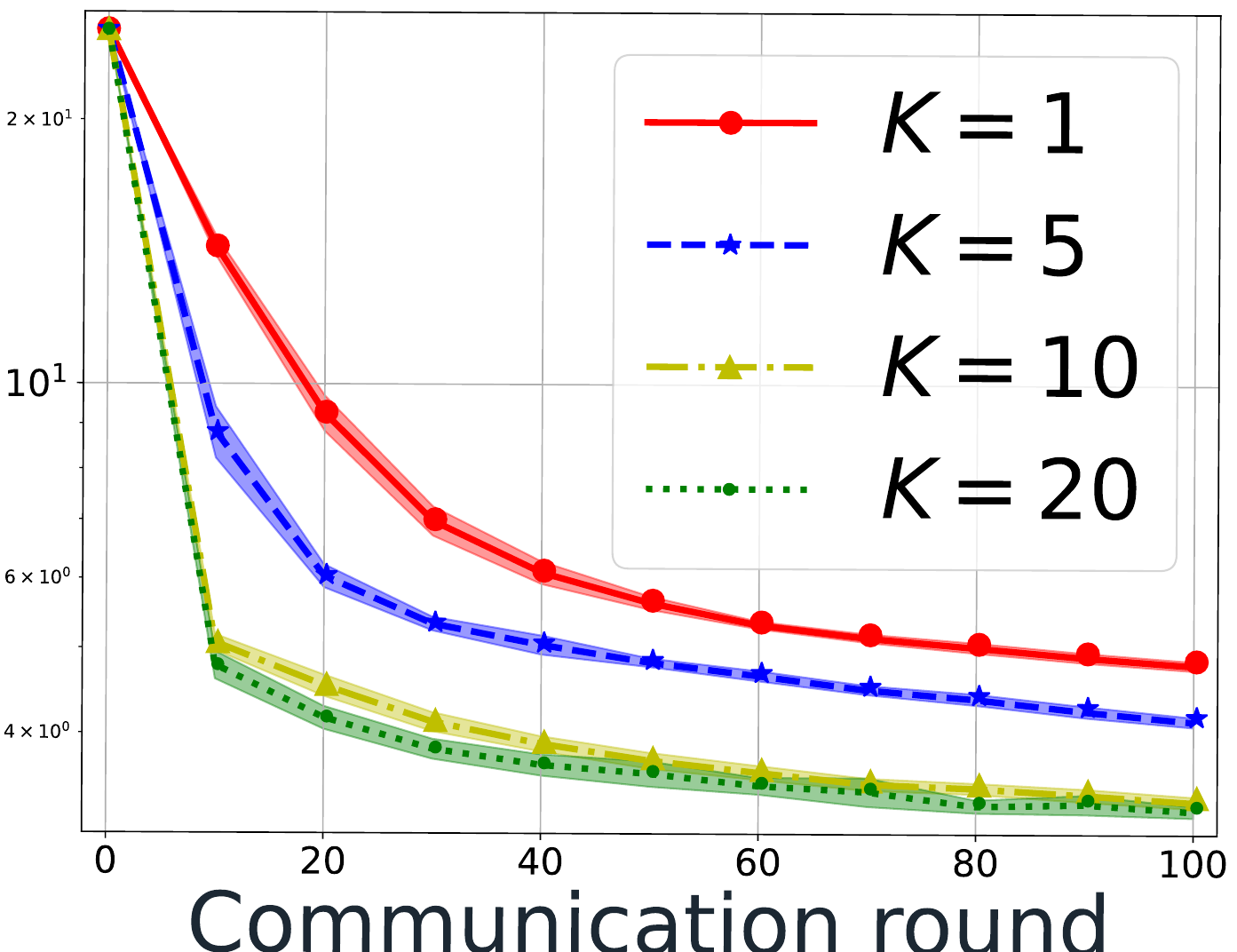}
\end{minipage}
&
\begin{minipage}{.3\columnwidth}
\includegraphics[width=1.0\columnwidth]{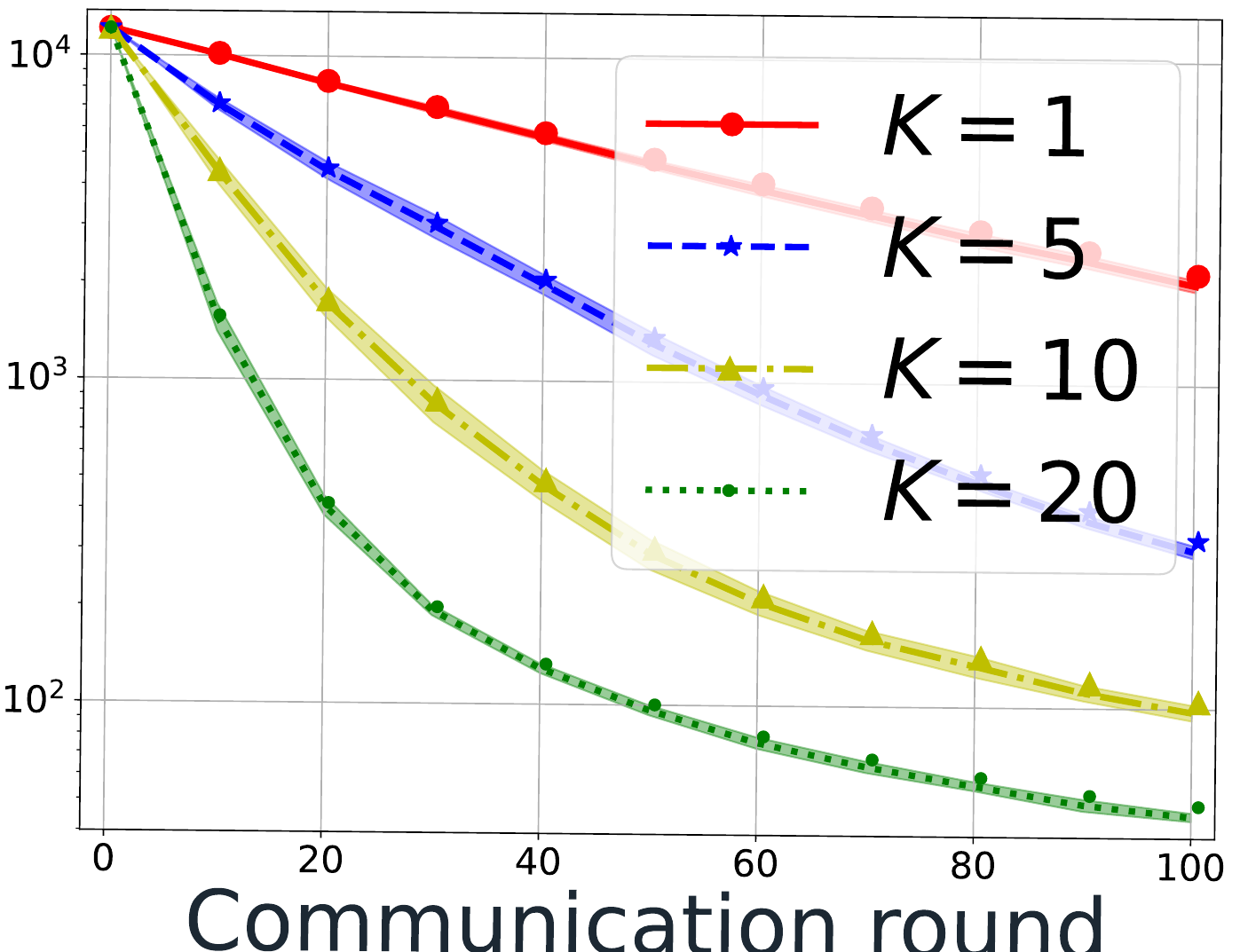}
\end{minipage}

\end{tabular}}

\vspace{.05in}

\captionof{figure}{Performance of Algorithm~\ref{Alg:FEDAVG} for different local steps..}

\label{fig:IRFedAvg:l1}

\vspace{-.1in}
\end{table}

\subsection{Experiments with nonconvex \fyy{outer-level}}
To demonstrate the performance of our method in the nonconvex case, we consider a problem with a sparsity promoted nonconvex regularizer, called log-sum penalty (LSP) function defined as $\text{LSP}(x) = \sum_{i=1}^n\text{log}\left(1+\frac{|x_i|}{\epsilon}\right)$, where $\epsilon>0$. This function is commonly used as a middle-ground between the $\ell_0$ and $\ell_1$ norms to enhance sparsity\me{.  I}t was shown that the log-sum penalty function \me{can promote sparsity more effectively} than the $\ell_1$ norm \cite{candes2008enhancing}. 

\noindent {\bf Moreau envelope of LSP.} Notably\me{,} LSP is prox-friendly and has an explicit expression \me{for its proximal operator} $\text{prox}_{\text{LSP}}(x)$ \cite{prater2022proximity}. The Moreau envelope of LSP can be written as $M^\mu_{\text{LSP}}(x) = f(\text{prox}_{\text{LSP}}(x)) + \frac{1}{2\mu}\|x-\text{prox}_{\text{LSP}}(x)\|^2$, \me{and its gradient is given by} $\nabla M^\mu_{\text{LSP}}(x) = \frac{1}{\mu}(x-\hbox{prox}_{\text{LSP}}(x))$\me{. N}ote that $\text{prox}_{\text{LSP}}(x) = \left(\text{prox}_{\text{LSP}}(x_i)\right)_{i=1}^n$. By invoking the results from \cite{prater2022proximity}, in the case where $\sqrt{\mu}\leq \epsilon$, we have
$$\text{prox}_{\text{LSP}}(x_i)= 
\left\{
\begin{aligned}
& 0, \quad \text{if } |x_i|\leq \tfrac{\mu}{\epsilon}; \\
& \text{sign}(x_i) \tfrac{|x_i|-\epsilon+\sqrt{(|x_i|+\epsilon)^2-4\mu}}{2}, \quad \text{otherwise}.
\end{aligned}
\right.$$
Therefore\me{,} we can replace the \me{outer-level} problem in \eqref{prob:l1} with $M^\mu_{\text{LSP}}(x)$, yielding 
\begin{align}\label{prob:lsp}
&\min_x \quad M^\mu_{\text{LSP}}(x) \quad\text{s.t.} \quad x\in  \text{arg}\min_{y}  h(y) = \tfrac{1}{2N}\textstyle{\sum_{i=1}^N\sum_{\tilde{\ell}\in\tilde{\mathcal{D}}_i}}\|U_{i,\tilde{\ell}}y-v_{i,\tilde{\ell}}\|^2 .
\end{align}
\noindent {\bf Experiments and results.} The experimental results for the nonconvex \me{outer-level} are shown in Figure \ref{exp2:ncvx}. We test the performance of the two-loop scheme proposed for the nonconvex setting\RRRme{,} where \RRRme{StR-FedAvg} is employed in the inner loop. Three choices of local steps $K=\{1, 5, 10, 20\}$ \RRRme{are} used. We observe improvement of both \me{outer} and \fyy{inner-level} \me{problems} with larger local steps, which is indeed very promising for resolving nonconvex simple bilevel problems in FL.

\begin{table}[htb]
\setlength{\tabcolsep}{0pt}
\centering{
\begin{tabular}{c || c  c  c}
{\footnotesize Setting\ \ }& {WikiMath} & {MNIST} & {CIFAR-10}\\
\hline\\

\rotatebox[origin=c]{90}{{\footnotesize {$\mathbb{E}[f(\bar x_r)]$}}}
&
\begin{minipage}{.3\columnwidth}
\includegraphics[width=1.0\columnwidth]{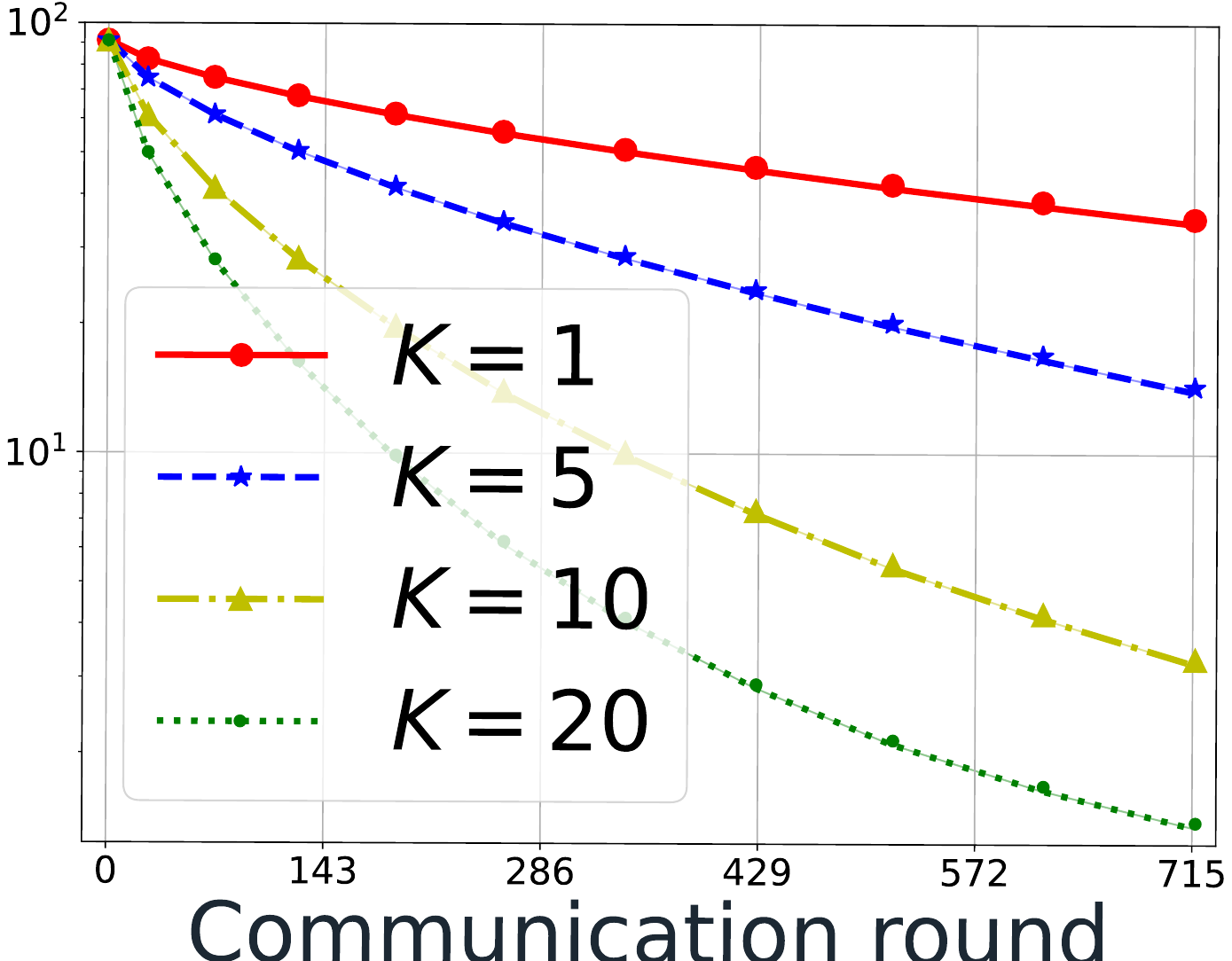}
\end{minipage}
&
\begin{minipage}{.3\columnwidth}
\includegraphics[width=1.0\columnwidth]{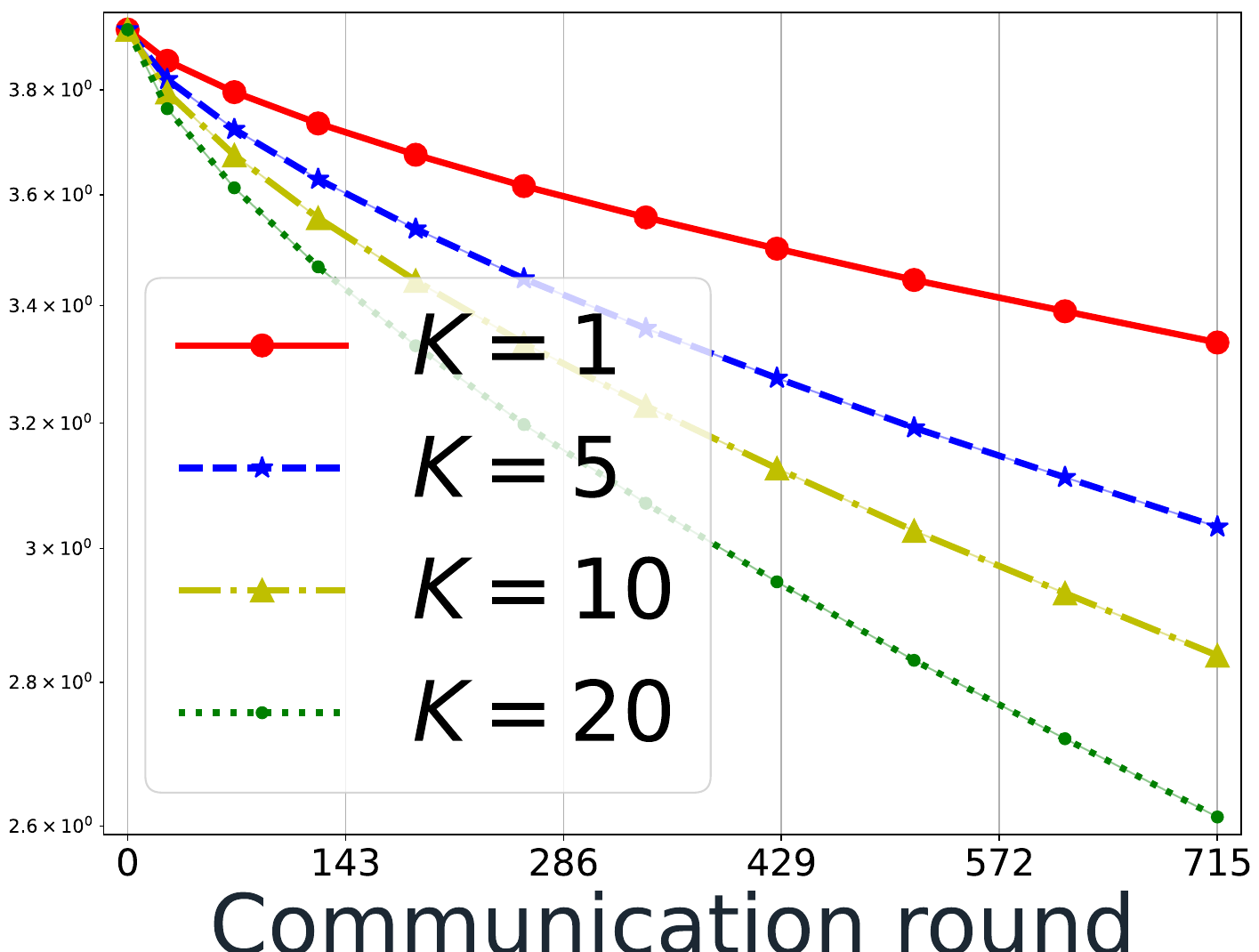}
\end{minipage}
&
\begin{minipage}{.3\columnwidth}
\includegraphics[width=1.0\columnwidth]{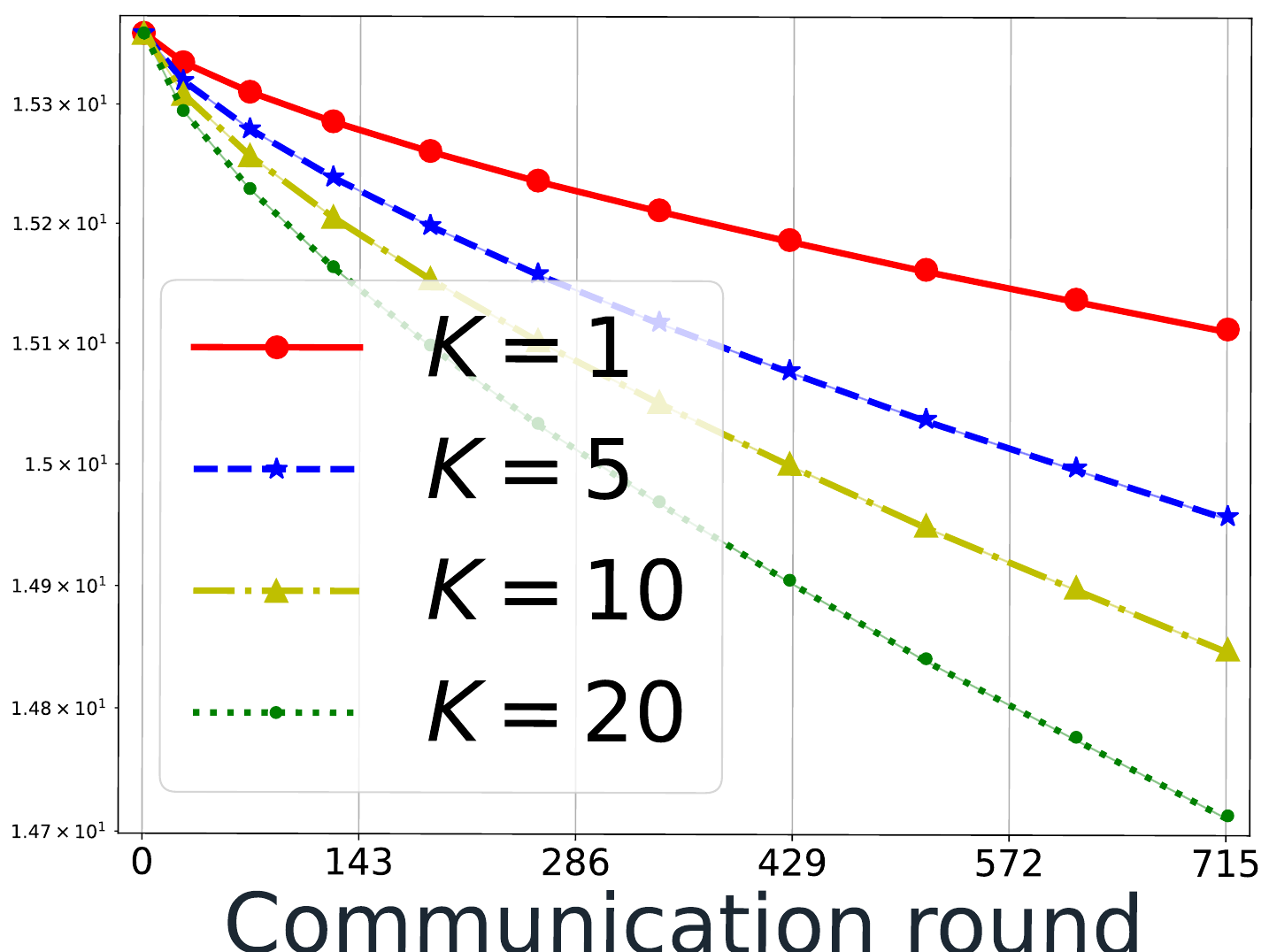}
\end{minipage}
\\

\hbox{}& & &\\
\hline\\

\rotatebox[origin=c]{90}{{\footnotesize {$\mathbb{E}[h(\bar x_r)]$}}}
&
\begin{minipage}{.3\columnwidth}
\includegraphics[width=1.0\columnwidth]{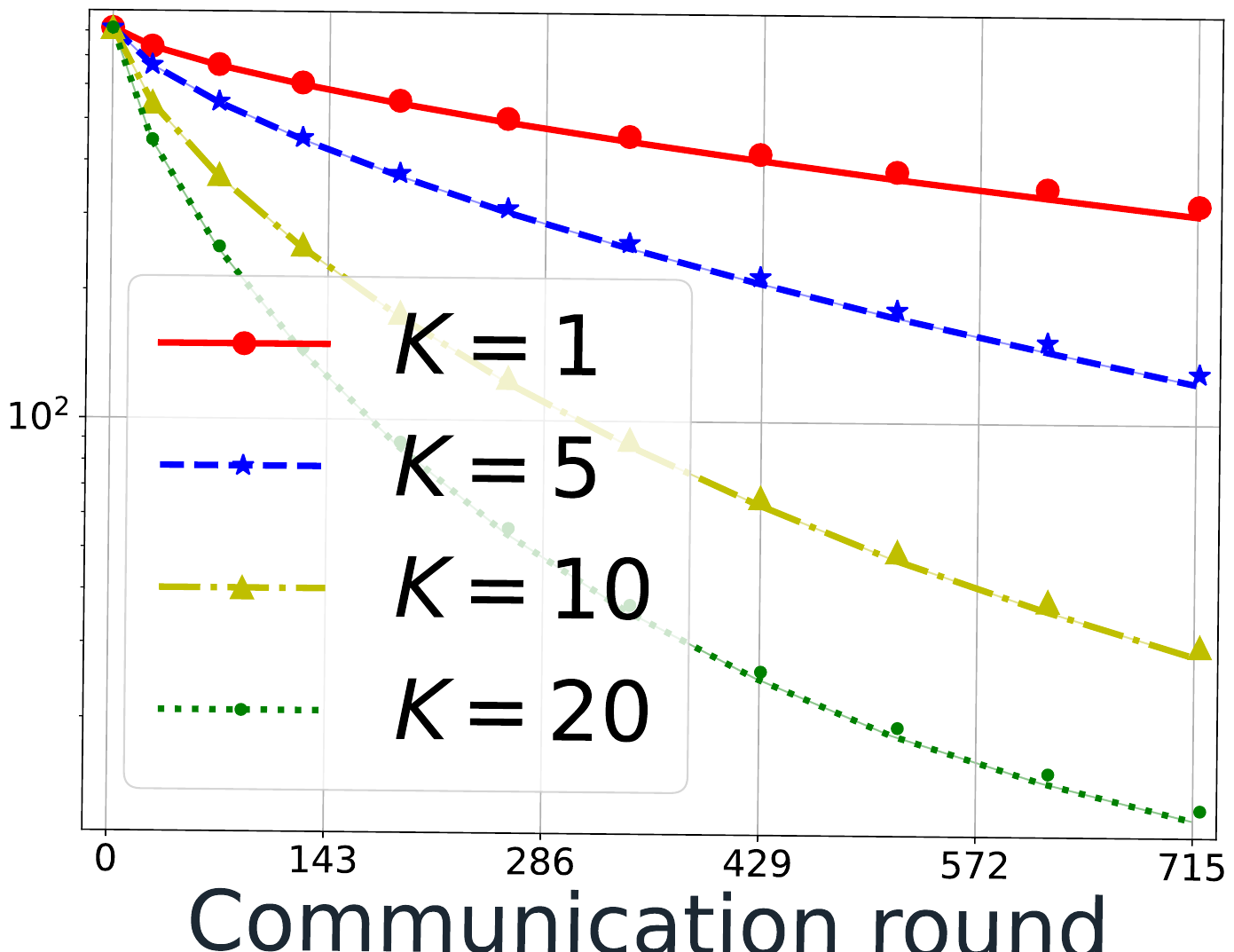}
\end{minipage}
&
\begin{minipage}{.3\columnwidth}
\includegraphics[width=1.0\columnwidth]{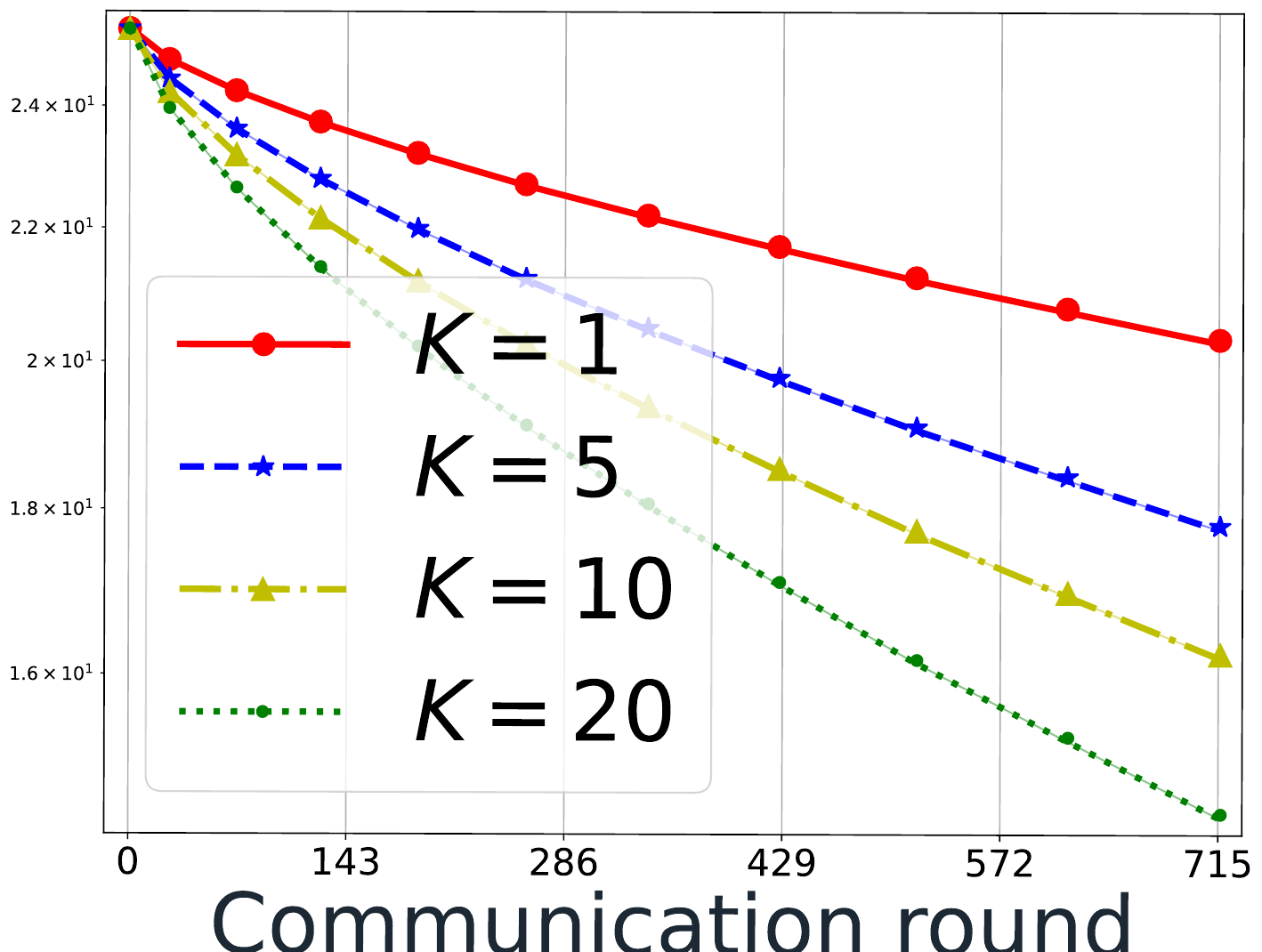}
\end{minipage}
&
\begin{minipage}{.3\columnwidth}
\includegraphics[width=1.0\columnwidth]{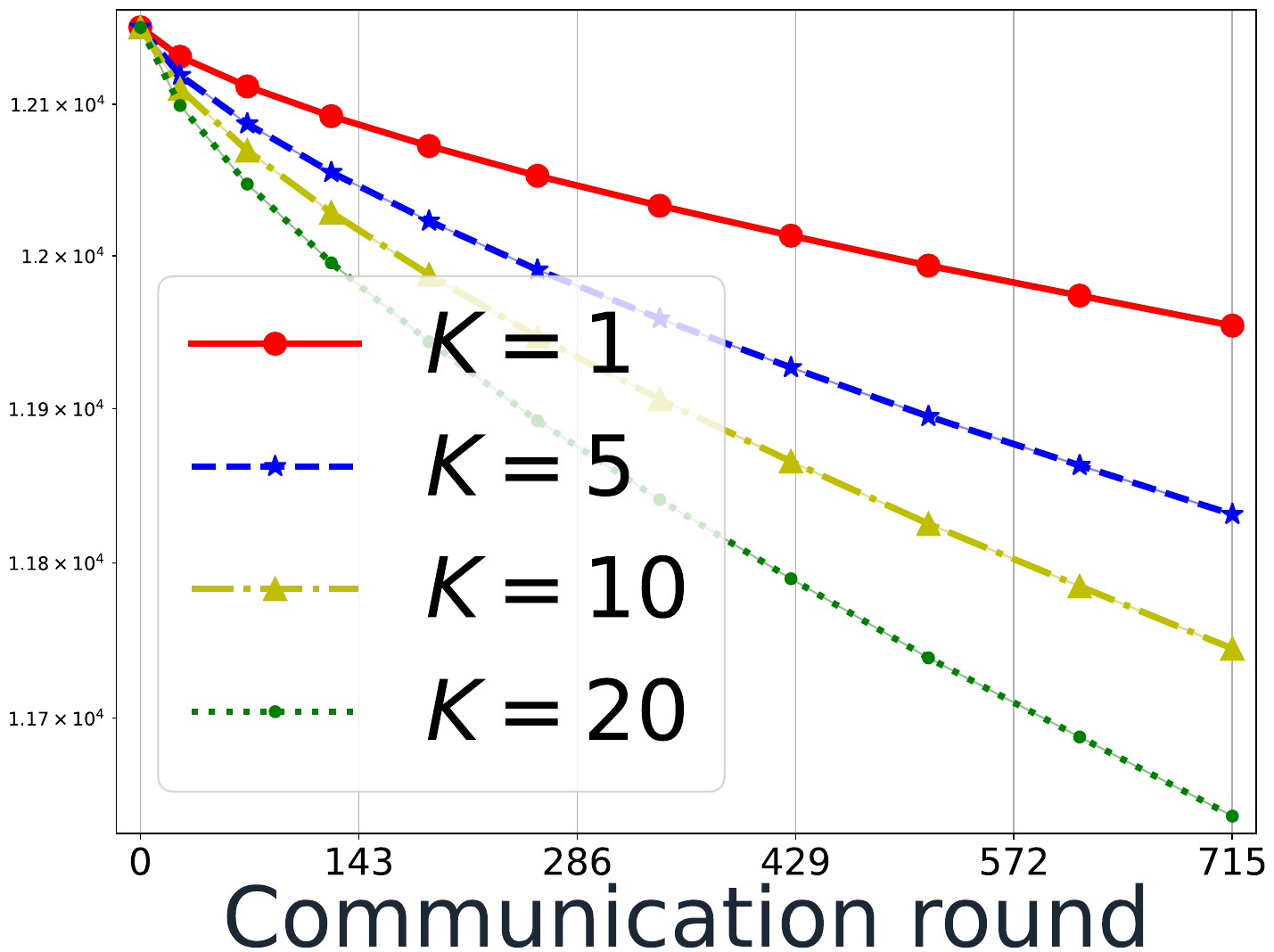}
\end{minipage}

\end{tabular}}

\vspace{.05in}

\captionof{figure}{Performance of Algorithm~\ref{algorithm:ncvx} for different local steps..}

\label{exp2:ncvx}

\vspace{-.1in}
\end{table}

\fyr{\subsection{Comparison with other FL and centralized schemes}}
{\bf{FL.}} \mje{We compare StR-FedAvg with the federated incremental subgradient method (FISM)~\cite{boontawee2025federated}, which applies FL only to the inner-level problem while keeping the outer-level centralized. We solve problem~\eqref{prob:l1} with outer objective $f(x):=\|x\|_1+0.5\|x\|^2$.} \yq{The results are shown in Figure~\ref{fig:FISM:COMP}.}

\noindent{\bf Insights.} \mje{StR-FedAvg outperforms FISM across all metrics. Unlike FISM, it exploits the strong convexity parameter $\mu_f$ in the stepsize and regularization updates, and updates the regularized outer-level gradient at every local iteration rather than once per communication round.}

\begin{figure}[t]
\centering

\setlength{\tabcolsep}{2pt}
\renewcommand{\arraystretch}{1.4}

\newcommand{\rowlabel}[1]{%
\raisebox{-.5\height}{\rotatebox[origin=c]{90}{\scriptsize #1}}}

\begin{tabular}{c|ccc}

\scriptsize Setting
&
\scriptsize $\mathbb{E}[f(\bar x_r)-f(\bar x_{r-1})]$
&
\scriptsize $\mathbb{E}[f(\bar x_r)]$
&
\scriptsize $\mathbb{E}[h(\bar x_r)]$
\\
\hline

\rowlabel{WikiMath}
&
\raisebox{-.5\height}{%
\includegraphics[
width=.3\columnwidth
]{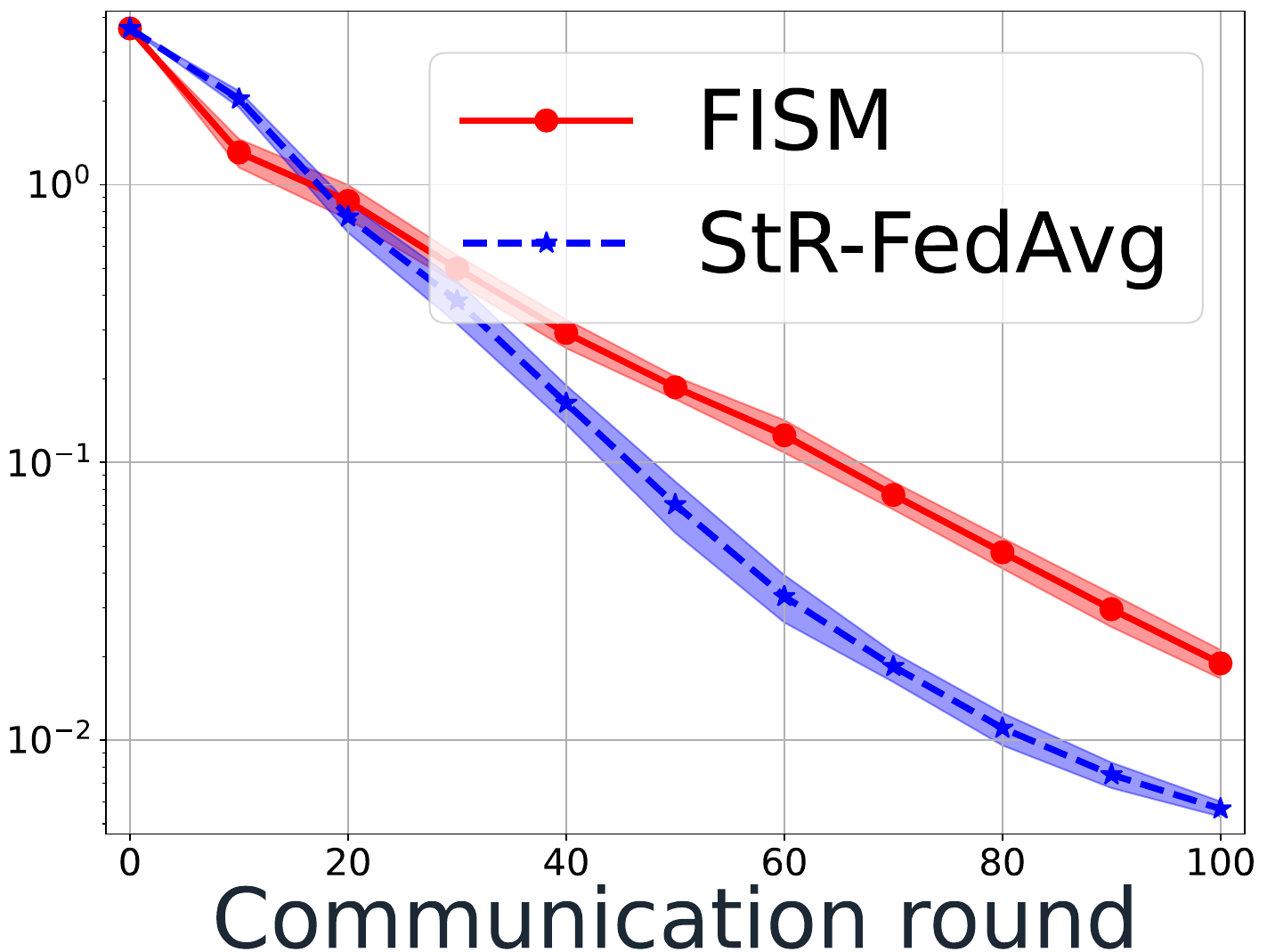}}
&
\raisebox{-.5\height}{%
\includegraphics[
width=.3\columnwidth
]{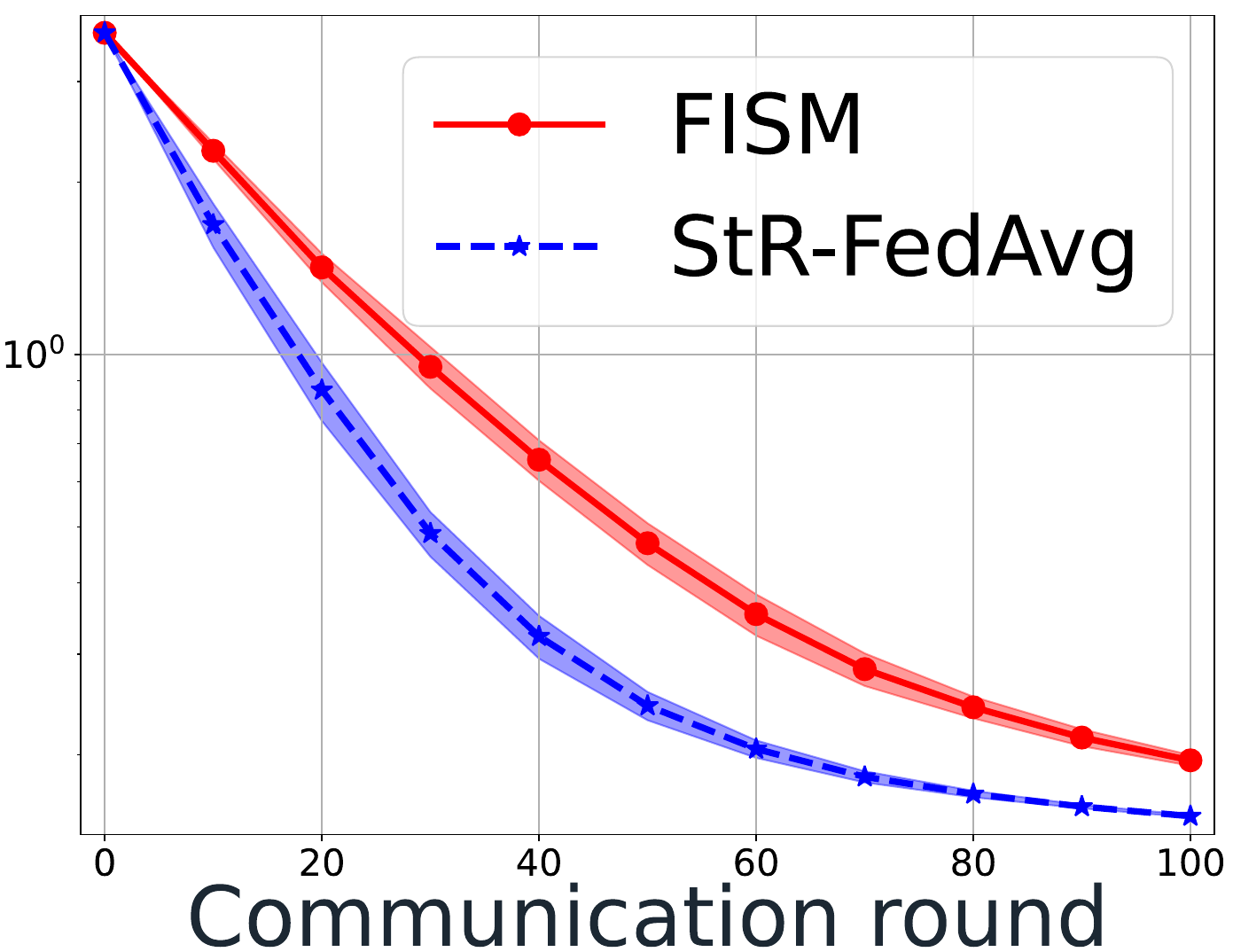}}
&
\raisebox{-.5\height}{%
\includegraphics[
width=.3\columnwidth
]{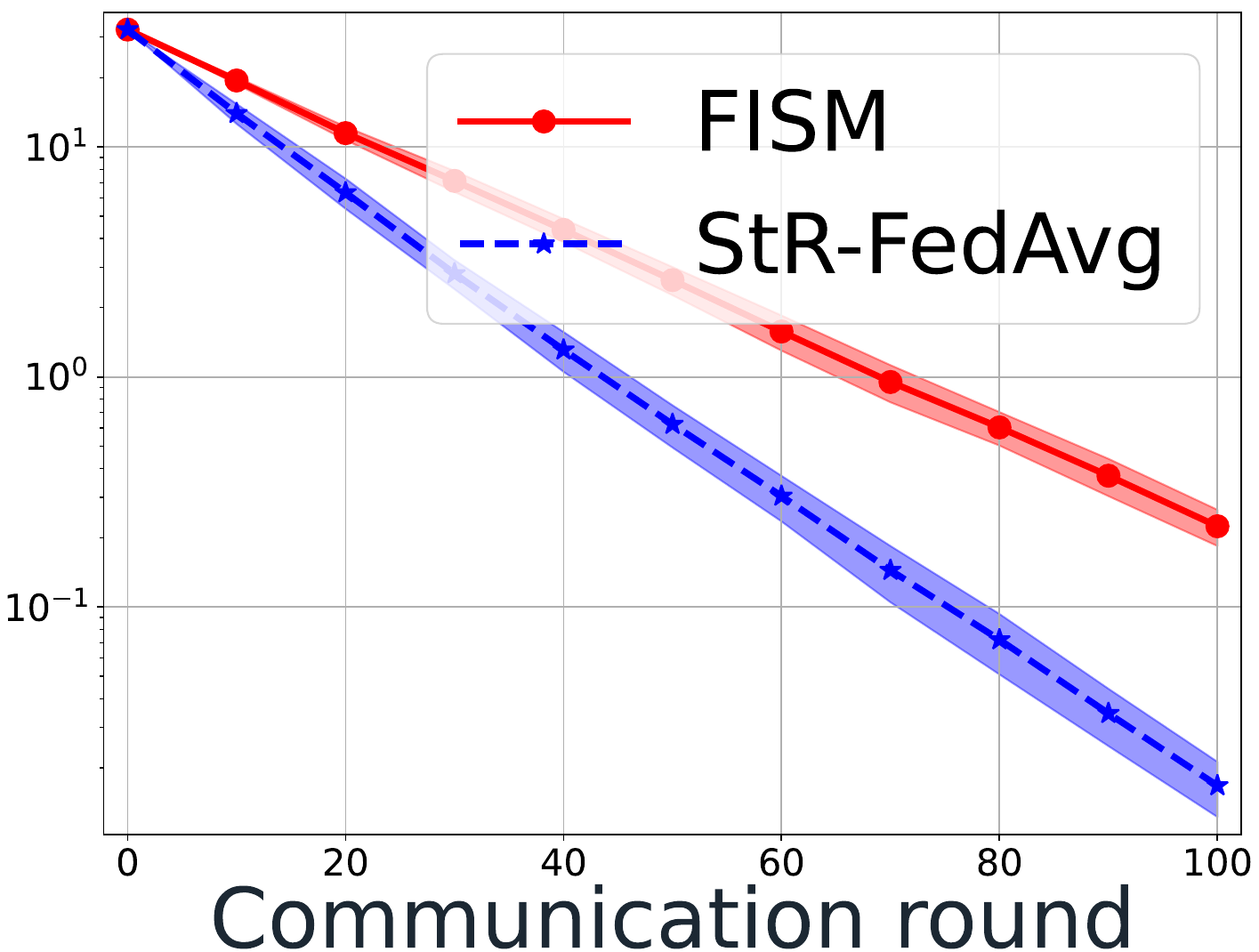}}
\\[3mm]

\end{tabular}

\vspace{-2mm}

\caption{
\yq{Comparison of Algorithm~\ref{algorithm:R-FedAvg and R-SCAFFOLD} and FISM~\cite{boontawee2025federated}.}
}

\vspace{-4mm}

\label{fig:FISM:COMP}

\end{figure}

\vspace{3mm}

\mje{
{\bf{\fyr{Centralized schemes.}}} Since no existing work studies federated bilevel optimization with distributed data at both levels, we compare the centralized version of our method with MNG~\cite{beck2014first}, BiG-SAM~\cite{sabach2017first}, CG-BiO~\cite{jiang2023conditional}, and a-IRG~\cite{kaushik2021method} for ill-posed inverse problems. In particular, we consider
\begin{align}
\min_x \quad & f(x) :=0.5\|x\|^2 + \|x\|_1 \\
\text{s.t.} \quad & x \in \arg\min_{\|y\|_1\le1} \ h(y) := 0.5\|{\bf A}y-{ b}\|^2,\notag
\end{align}
where ${\bf A}\in\mathbb{R}^{n\times n}$ and ${b}\in\mathbb{R}^n$ are generated from the ``Baart'' inverse problem~\cite{beck2014first} in the Regularization Toolbox.

\noindent{\bf Insight.} Although some methods progress faster initially, as the simulation budget increases, the proposed self-tuned regularization approach adapts and yields more stable improvement. As shown in Figure~\ref{fig: centralized comparison}, it eventually achieves the best upper-level objective value while attaining nearly the same inner-level performance as the competing methods.

\begin{table}[t]
\setlength{\tabcolsep}{0pt}
\centering{
\begin{tabular}{c || c  c  c}

{\footnotesize Setting\ \ } 
&
\multicolumn{3}{c}{
$\xrightarrow{\hspace*{2.6cm}\text{Run time}\hspace*{2.6cm}}$
}
\\
\hline\\

\rotatebox[origin=c]{90}{{\footnotesize {$f(x_k)$}}}
&
\begin{minipage}{.3\columnwidth}
\includegraphics[width=1.0\columnwidth]{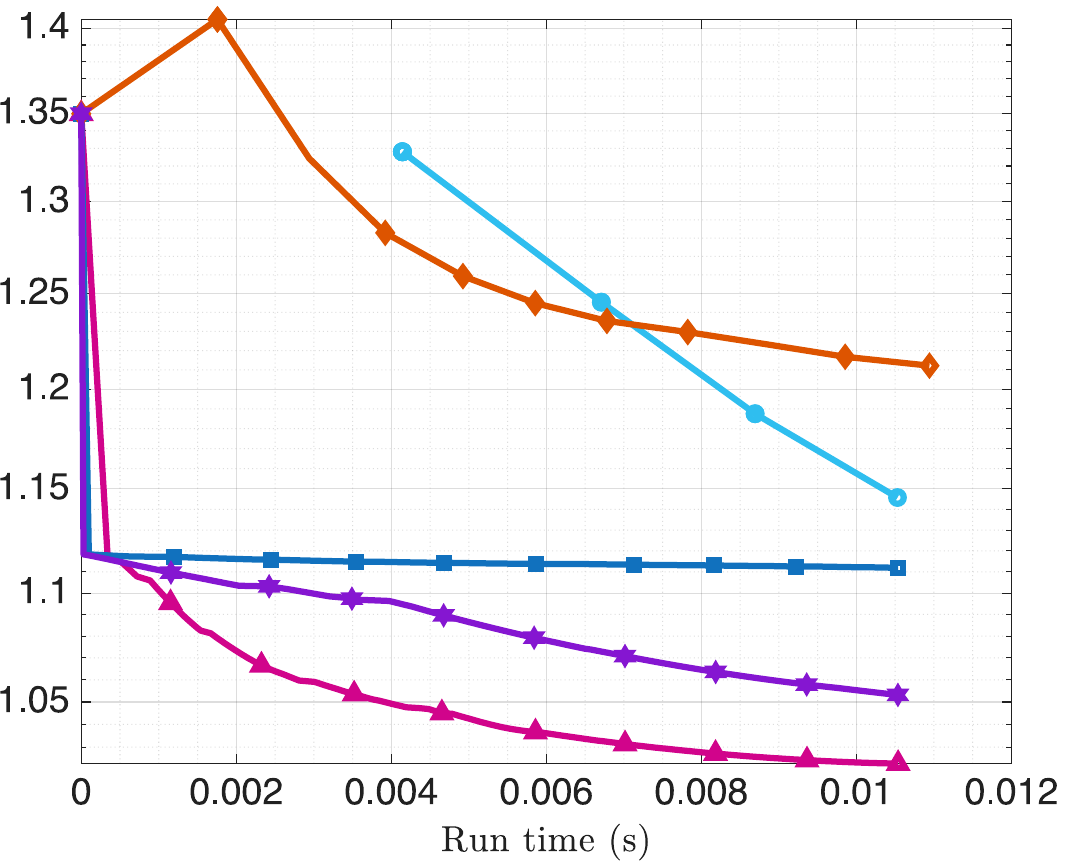}
\end{minipage}
&
\begin{minipage}{.3\columnwidth}
\includegraphics[width=1.0\columnwidth]{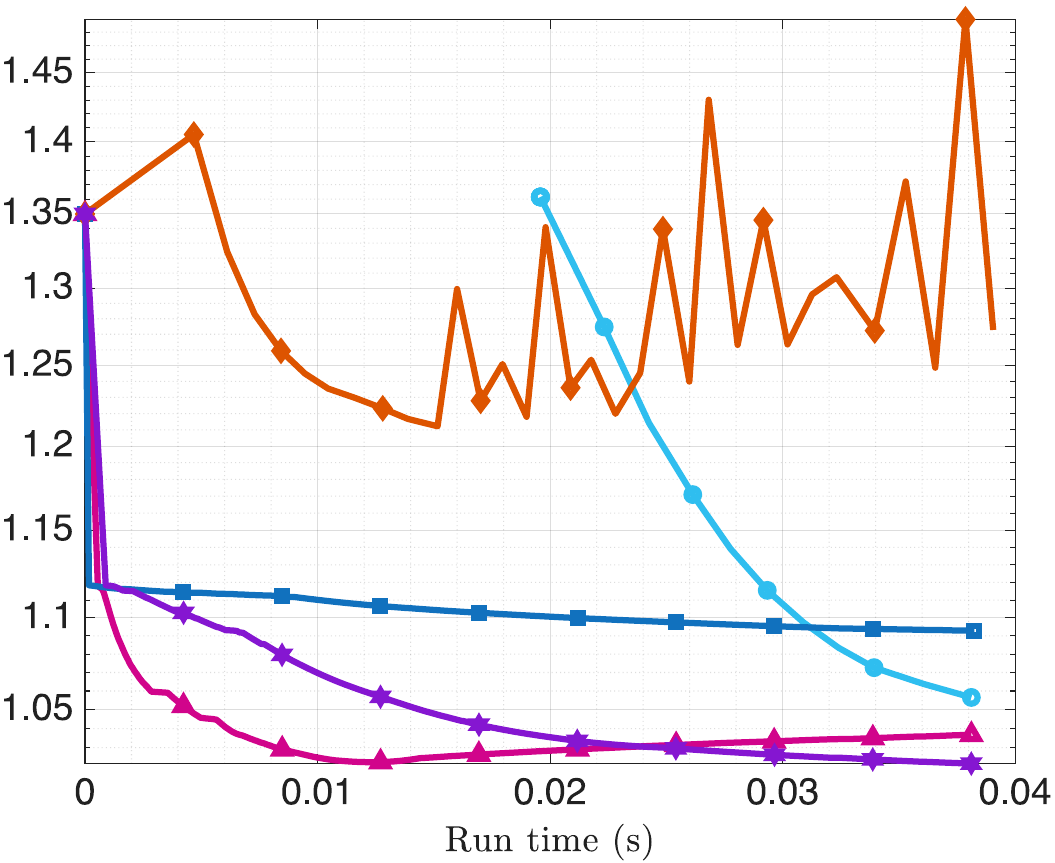}
\end{minipage}
&
\begin{minipage}{.3\columnwidth}
\includegraphics[width=1.0\columnwidth]{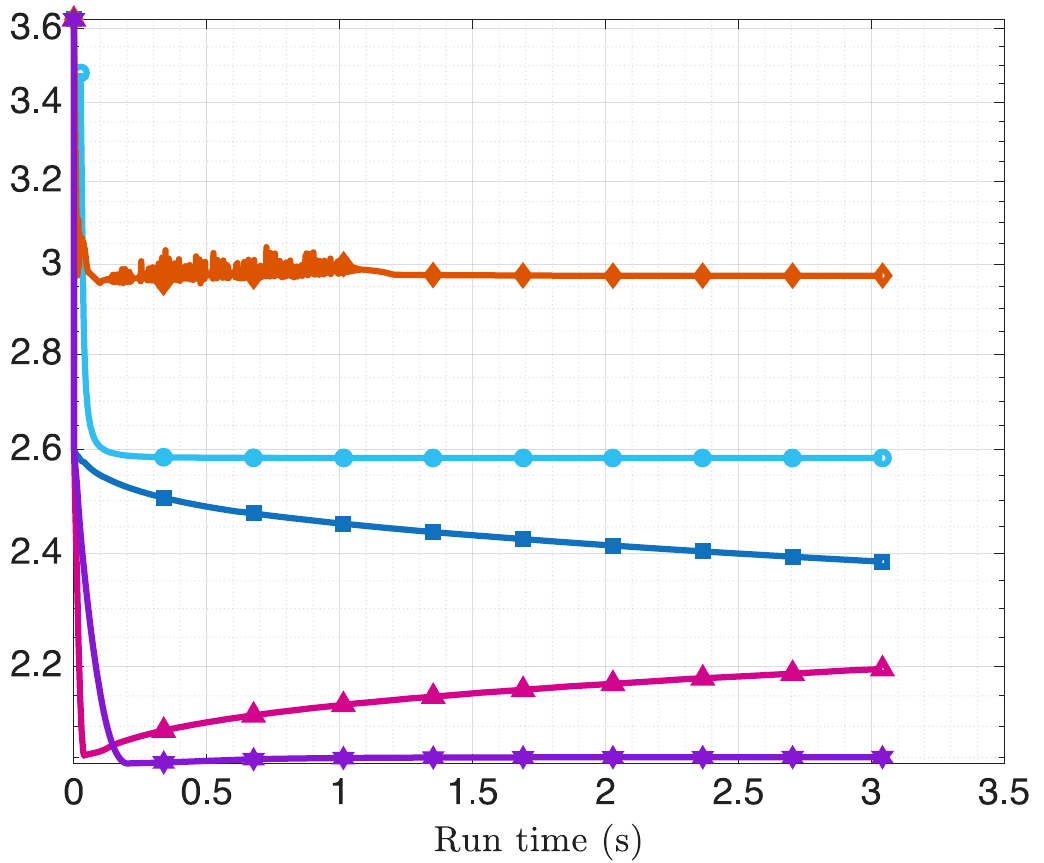}
\end{minipage}
\\

\hbox{}& & &\\
\hline\\

\rotatebox[origin=c]{90}{{\footnotesize {$h(x_k)$}}}
&
\begin{minipage}{.3\columnwidth}
\includegraphics[width=1.0\columnwidth]{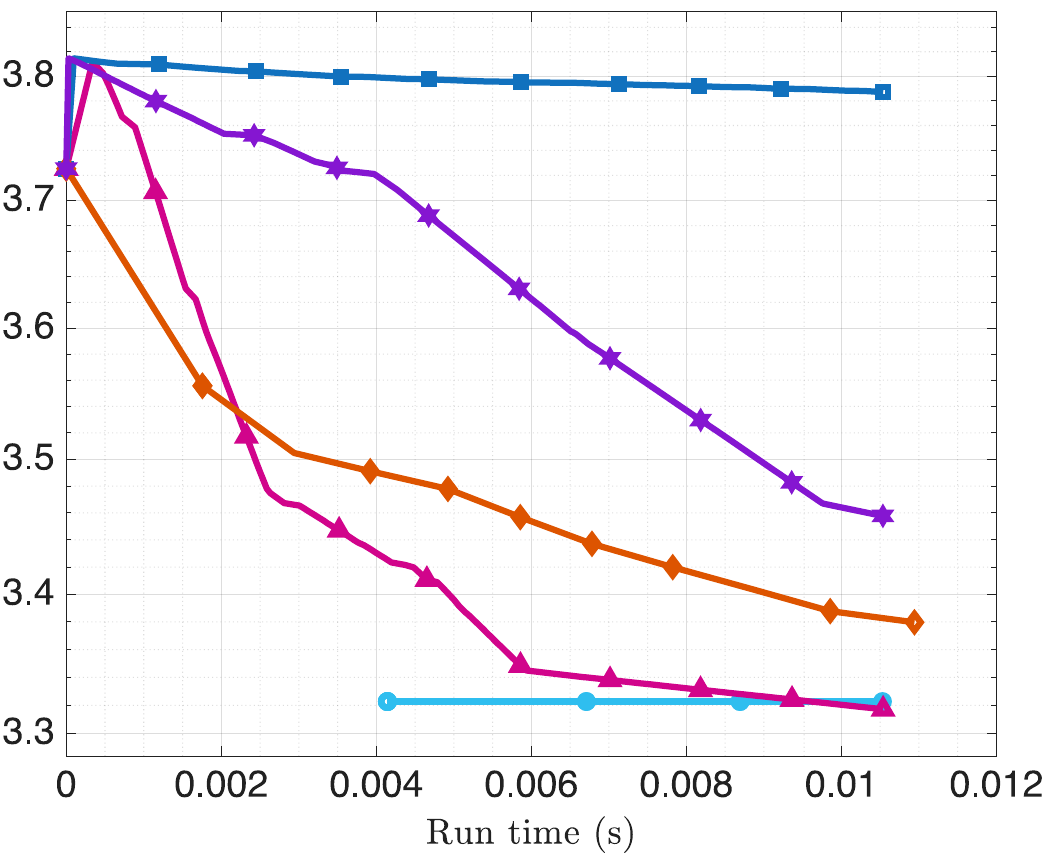}
\end{minipage}
&
\begin{minipage}{.3\columnwidth}
\includegraphics[width=1.0\columnwidth]{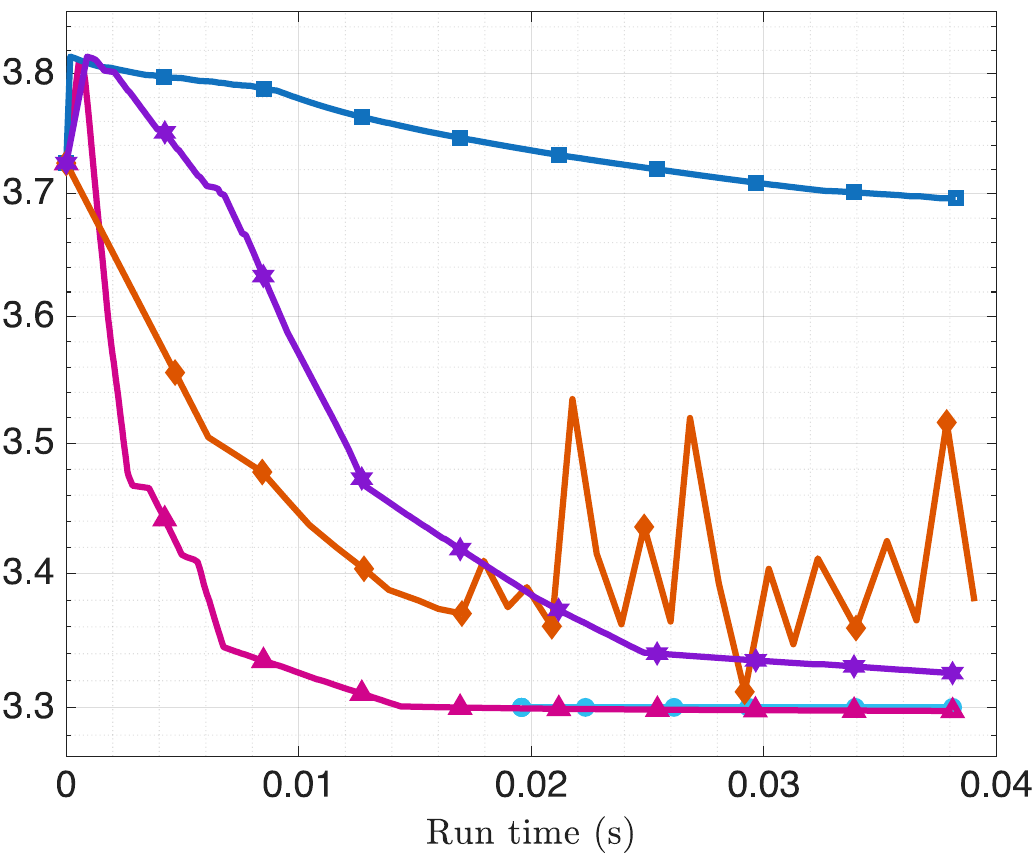}
\end{minipage}
&
\begin{minipage}{.3\columnwidth}
\includegraphics[width=1.0\columnwidth]{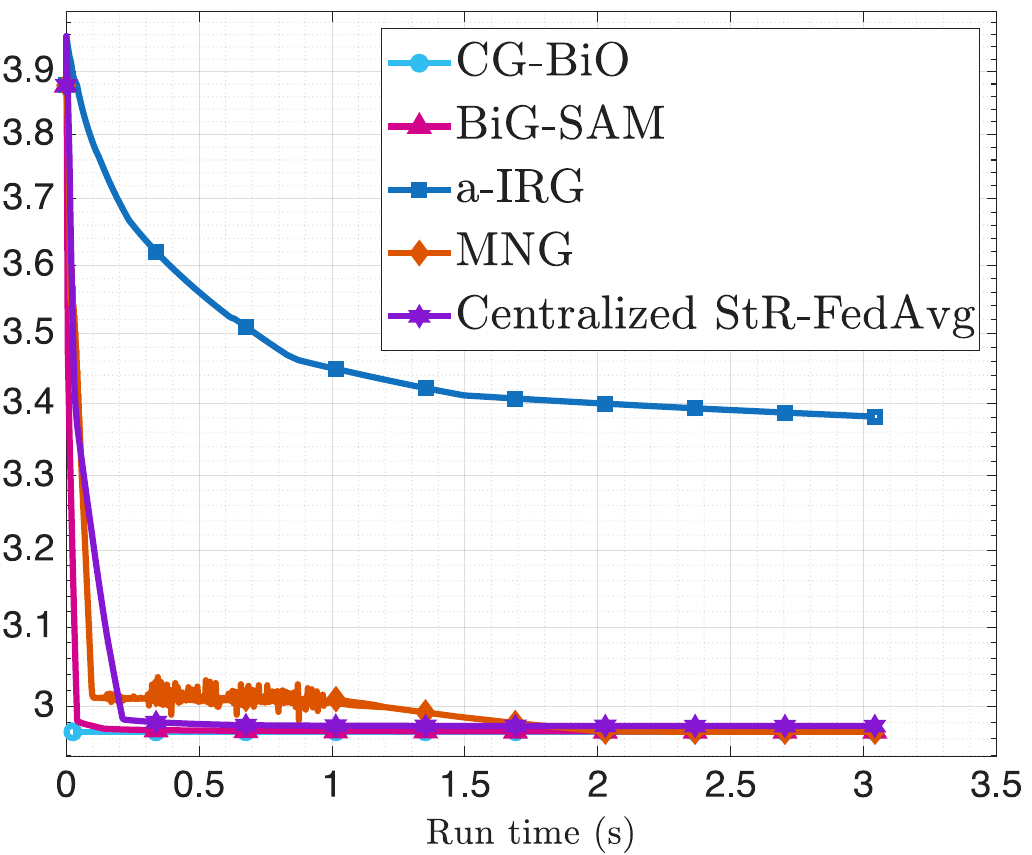}
\end{minipage}

\end{tabular}}

\vspace{.05in}

\captionof{figure}{\fyr{Comparison with centralized schemes.}}

\label{fig: centralized comparison}

\vspace{-.1in}
\end{table}
}

\section{Concluding Remarks}
In this paper, we propose a \RRRme{self-tuned} regularized scheme for solving \RRRme{the optimal solution selection problem in federated learning}. This problem finds its roots in the study of ill-posed optimization and is motivated by \me{over-parameterized} learning, among others. We enable the federated averaging method to address this class of bilevel problems via a \RRRme{self-tuned} regularized variant, called \RRRme{StR-FedAvg}. To showcase the significance of our contribution, we obtain explicit communication complexity bounds for \far{this method for} addressing the \me{simple} bilevel problem when the inner-level loss function is convex and the outer-level loss function is convex or strongly convex. \me{For the case where the \fyy{outer-level} loss function is nonconvex, we develop a two-loop federated scheme using iterative regularization and provide explicit error bounds.}

\bibliographystyle{siam}
\bibliography{ref}
\section{Appendix}

\Rme{
{\bf{A\quad Proof of Lemma~\ref{assumption:main4}}}
\begin{proof} 
Invoking the convexity and $L_f$-smoothness of each $f_i$, we obtain
\begin{align*}
\|\nabla f_i(x)-\nabla f_i(y)\|^2
~\le~ 2L_f\Big(f_i(x)-f_i(y)-\langle \nabla f_i(y),\,x-y\rangle\Big).
\end{align*}
Setting $y=x_i^*$ for some $x_i^*\in\arg\min_x f_i(x)$ and noting that $\nabla f_i(x_i^*)=0$, we obtain
\begin{align*}
\|\nabla f_i(x)\|^2
~\le~ 2L_f\Big(f_i(x)-f_i(x_i^*)\Big).
\end{align*}
Averaging both sides over $i=1,\ldots,N$ and adding and subtracting $f^*$ on the right-hand side, we obtain
\begin{align*}
\tfrac{1}{N}\textstyle\sum_{i=1}^N\|\nabla f_i(x)\|^2\le 2L_f\left(f^*-\tfrac{1}{N}\textstyle\sum_{i=1}^Nf_i(x_i^*)\right)+2L_f\left(f(x)-f^*\right).
\end{align*}
We note that $f^*\ge \frac{1}{N}\sum_{i=1}^N f_i(x_i^*)$. Thus, setting $G_f\triangleq \sqrt{\frac{2L_f}{N}\left(f^*-\frac{1}{N}\sum_{i=1}^N f_i(x_i^*)\right)}$ and $B_f\triangleq 1$, we obtain the desired result. The proof for the function $h$ follows similarly.
\end{proof}}

{\bf B \quad Proof of Lemma~\ref{remark:bound in terms of the obj}}
\begin{proof}
Let us denote $ f_{\eta,i}(x) : = h_i(x) +\eta f_i(x)$. We have
\vspace{-7pt}
\begin{align*}
\tfrac{1}{N}\textstyle{\sum_{i=1}^N}\|\nabla f_{\eta,i}(x)\|^2&=\tfrac{1}{N}\textstyle{\sum_{i=1}^N}\|\eta \nabla f_i(x)+\nabla h_i(x)\|^2 \le \tfrac{2\eta^2}{N}\textstyle{\sum_{i=1}^N}\| \nabla f_i(x)\|^2\\
&+\tfrac{2}{N}\textstyle{\sum_{i=1}^N}\|\nabla h_i(x)\|^2.
\end{align*}
\vspace{-10pt}
Let $f_{\min} := \inf_{x} f(x)$ and $h_{\min} := \inf_{x} h(x)$. Invoking Lemma~\ref{assumption:main4}, we get
\begin{align*}
&\tfrac{1}{N}\textstyle{\sum_{i=1}^N}\|\nabla f_{\eta,i}(x)\|^2\le 2\eta^2  G_f^2+4\eta^2L_f B_f^2(f(x)-f_{\min})+2 G_h^2+4L_h B_h^2(h(x)-h_{\min})\\
&=2\eta^2  G_f^2+4\eta^2L_f B_f^2(f(x)-f_{\min}+f^*-f^*)+2 G_h^2+4L_h B_h^2(h(x)-h_{\min}+h^*-h^*),
\end{align*}
where $f^*$ and $h^*$ denote the optimal objective values in \eqref{problem: main problem}. We obtain
\begin{align*}
\tfrac{1}{N}\textstyle{\sum_{i=1}^N}\|\nabla f_{\eta,i}(x)\|^2\le& 2\eta^2  G_f^2+4\eta L_f B_f^2(\eta f(x)-\eta f^*)+2 G_h^2+4L_h B_h^2(h(x)-h^*)\\
&+4\eta^2L_f B_f^2(f^*-f_{\min})+4L_h B_h^2(h^*-h_{\min}).
\end{align*}
Recalling the definition of $B$ and $G$,  we obtain
\vspace{-6pt}
\begin{align*}
\tfrac{1}{N}\textstyle{\sum_{i=1}^N}\|\nabla f_{\eta,i}(x)\|^2&\le G^2+B^2\left((\eta f(x)+h(x))-(\eta f(x^*)+h(x^*))\right)\\
&=G^2+B^2\left(f_\eta(x)-(h^*+\eta f^*)\right).
\end{align*}
\vspace{-6pt}
Invoking the fact that $f_\eta^* = f_\eta(x_\eta^*)\le h^*+\eta f^*$, we obtain the result.
\end{proof}
~\\

{\bf{C\quad Proof of Lemma~\ref{remark:bound in terms of the obj-ncvx}}}
\begin{proof}
The proof follows similarly to that of Lemma~\ref{remark:bound in terms of the obj} \far{by replacing  $f$ with $g$ and updating} $G_f$ and $B_f$ to $G_g$ and $B_g$, respectively, while setting \far{$L_f=1=\mu_f=1$.}
\end{proof}
{\bf D \quad Proof of Proposition~\ref{Proposition:Proposition1}}
\begin{proof}

Invoking Lemmas~\ref{lemma:first bound for x-x*} and \ref{lemma: bound for varepsilon-k}, we obtain
\begin{align*}
&\mathbb{E}\left[\|\RRme{\bar x}^r-x_\eta^*\|^2\right]\le(1-0.5{\eta \mu_f \tilde \gamma}{})\mathbb{E}\left[\|\RRme{\bar x}^{r-1}-x_\eta^*\|^2\right]-\tfrac{\tilde \gamma}{3}\mathbb{E}\left[(f_\eta(\RRme{\bar x}^{r-1})-f_\eta(x_\eta^*))\right]\\
&+\tfrac{\tilde \gamma^2(2\eta^2\sigma_f^2+2\sigma_h^2)}{KS}+\left(1-\tfrac{S}{N}\right)\tfrac{4\tilde \gamma^2}{S}G^2+\tfrac{9\tilde \gamma^3(L_h+\eta L_f)(2\eta^2\sigma_f^2+2\sigma_h^2)}{K\gamma_g^2}+\tfrac{18(L_h+\eta L_f)\tilde \gamma^3G^2}{{\gamma_g^2}}\\
&=(1-0.5{\eta \mu_f \tilde \gamma}{})\mathbb{E}\left[\|\RRme{\bar x}^{r-1}-x_\eta^*\|^2\right]-\tfrac{\tilde \gamma}{3}\mathbb{E}\left[(f_\eta(\RRme{\bar x}^{r-1})-f_\eta(x_\eta^*))\right]\\
&+\tilde \gamma^2\left(\tfrac{(2\eta^2\sigma_f^2+2\sigma_h^2)}{KS}+\left(1-\tfrac{S}{N}\right)\tfrac{4}{S}G^2+\tfrac{9\tilde \gamma(L_h+\eta L_f)(2\eta^2\sigma_f^2+2\sigma_h^2)}{K\gamma_g^2}+\tfrac{18(L_h+\eta L_f)\tilde \gamma G^2}{{\gamma_g^2}}\right).
\end{align*}
Dividing both sides by $(1-0.5{\eta \mu_f \tilde \gamma}{})^{r}$, for any $r\geq 1$, we obtain
\begin{align*}
&\tfrac{\mathbb{E}\left[\|\RRme{\bar x}^r-x_\eta^*\|^2\right]}{(1-0.5{\eta \mu_f \tilde \gamma}{})^{r}}\le\tfrac{\mathbb{E}\left[\|\RRme{\bar x}^{r-1}-x_\eta^*\|^2\right]}{(1-0.5{\eta \mu_f \tilde \gamma}{})^{r-1}}-\tfrac{\tilde \gamma}{3(1-0.5{\eta \mu_f \tilde \gamma}{})^{r}}\mathbb{E}\left[(f_\eta(\RRme{\bar x}^{r-1})-f_\eta(x_\eta^*))\right]\\
&+\tfrac{\tilde \gamma^2}{(1-0.5{\eta \mu_f \tilde \gamma}{})^{r}}\left(\tfrac{(2\eta^2\sigma_f^2+2\sigma_h^2)}{KS}+\left(1-\tfrac{S}{N}\right)\tfrac{4}{S}G^2+\tfrac{9\tilde \gamma(L_h+\eta L_f)(2\eta^2\sigma_f^2+2\sigma_h^2)}{K\gamma_g^2}+\tfrac{18(L_h+\eta L_f)\tilde \gamma G^2}{{\gamma_g^2}}\right).
\end{align*}
Summing both sides over $r=1,\ldots,R$, we have
\begin{align*}
&\textstyle{\sum_{r=1}^R}\tfrac{\mathbb{E}\left[\|\RRme{\bar x}^r-x_\eta^*\|^2\right]}{(1-0.5{\eta \mu_f \tilde \gamma}{})^{r}}\le\sum_{r=1}^R\tfrac{\mathbb{E}\left[\|\RRme{\bar x}^{r-1}-x_\eta^*\|^2\right]}{(1-0.5{\eta \mu_f \tilde \gamma}{})^{r-1}}-\sum_{r=1}^R\tfrac{\tilde \gamma}{3(1-0.5{\eta \mu_f \tilde \gamma}{})^{r}}\mathbb{E}\left[(f_\eta(\RRme{\bar x}^{r-1})-f_\eta(x_\eta^*))\right]\\
&+\textstyle{\sum_{r=1}^R}\tfrac{\tilde \gamma^2}{(1-0.5{\eta \mu_f \tilde \gamma})^{r}}\left(\tfrac{(2\eta^2\sigma_f^2+2\sigma_h^2)}{KS}+\tfrac{4G^2\left(N-S\right)}{NS}+\tfrac{9\tilde \gamma(L_h+\eta L_f)(2\eta^2\sigma_f^2+2\sigma_h^2)}{K\gamma_g^2}+\tfrac{18(L_h+\eta L_f)\tilde \gamma G^2}{{\gamma_g^2}}\right).
\end{align*}
Rearranging the terms and multiplying both sides by $(1-0.5{\eta \mu_f \tilde \gamma}{})^{R}$, we obtain
\begin{align*}
&\mathbb{E}\left[\|\RRme{\bar x}^R-x_\eta^*\|^2\right]\le(1-0.5{\eta \mu_f \tilde \gamma}{})^{R}\mathbb{E}\left[\|\RRme{\bar x}^{0}-x_\eta^*\|^2\right]-(1-0.5{\eta \mu_f \tilde \gamma}{})^{R}\textstyle{\sum_{r=1}^R}\tfrac{\tilde \gamma}{3(1-0.5{\eta \mu_f \tilde \gamma}{})^{r}}\\&\times\mathbb{E}\left[(f_\eta(\RRme{\bar x}^{r-1})-f_\eta(x_\eta^*))\right]+(1-0.5{\eta \mu_f \tilde \gamma}{})^{R}\textstyle{\sum_{r=1}^R}\tfrac{\tilde \gamma^2}{(1-0.5{\eta \mu_f \tilde \gamma}{})^{r}}\\
&\times\left(\tfrac{(2\eta^2\sigma_f^2+2\sigma_h^2)}{KS}+\left(1-\tfrac{S}{N}\right)\tfrac{4}{S}G^2+\tfrac{9\tilde \gamma(L_h+\eta L_f)(2\eta^2\sigma_f^2+2\sigma_h^2)}{K\gamma_g^2}+\tfrac{18(L_h+\eta L_f)\tilde \gamma G^2}{{\gamma_g^2}}\right).
\end{align*}
Invoking Jensen's inequality and the definition of $\theta$, we obtain
\begin{align*}
&\mathbb{E}\left[\|\RRme{\bar x}^R-x_\eta^*\|^2\right]\le\left(\tfrac{1}{\theta}\right)^{R}\mathbb{E}\left[\|\RRme{\bar x}^{0}-x_\eta^*\|^2\right]-\tilde \gamma\left(\tfrac{1}{\theta}\right)^{R}\textstyle{\sum_{r=1}^{R}}\theta^{r}\mathbb{E}\left[(f_\eta(\RRme{\bar x}^{r-1})-f_\eta(x_\eta^*))\right]+\\
&\tilde \gamma^2\left(\tfrac{1}{\theta}\right)^{R}\textstyle{\sum_{r=1}^{R}}\theta^{r}\left(\tfrac{(2\eta^2\sigma_f^2+2\sigma_h^2)}{KS}+\left(1-\tfrac{S}{N}\right)\tfrac{4}{S}G^2+\tfrac{9\tilde \gamma(L_h+\eta L_f)(2\eta^2\sigma_f^2+2\sigma_h^2)}{K\gamma_g^2}+\tfrac{18(L_h+\eta L_f)\tilde \gamma G^2}{{\gamma_g^2}}\right)\\
&=\left(\tfrac{1}{\theta}\right)^{R}\mathbb{E}\left[\|\RRme{\bar x}^{0}-x_\eta^*\|^2\right]-\tilde \gamma\left(\tfrac{1}{\theta}\right)^{R}\textstyle{\sum_{\hat r=1}^{R}}\theta^{\hat r}\textstyle{\sum_{r=1}^{R}}\tfrac{\theta^{r}}{\sum_{\hat r=1}^{R-1}\theta^{\hat r}}\mathbb{E}\left[(f_\eta(\RRme{\bar x}^{r-1})-f_\eta(x_\eta^*))\right]+\\
&\tilde \gamma^2\left(\tfrac{1}{\theta}\right)^{R}\textstyle{\sum_{r=1}^{R}}\theta^{r}\left(\tfrac{(2\eta^2\sigma_f^2+2\sigma_h^2)}{KS}+\left(1-\tfrac{S}{N}\right)\tfrac{4}{S}G^2+\tfrac{9\tilde \gamma(L_h+\eta L_f)(2\eta^2\sigma_f^2+2\sigma_h^2)}{K\gamma_g^2}+\tfrac{18(L_h+\eta L_f)\tilde \gamma G^2}{{\gamma_g^2}}\right)\\
&\le\left(\tfrac{1}{\theta}\right)^{R}\mathbb{E}\left[\|\RRme{\bar x}^{0}-x_\eta^*\|^2\right]-\tilde \gamma\left(\tfrac{1}{\theta}\right)^{R}\textstyle{\sum_{\hat r=1}^{R}}\theta^{\hat r}\mathbb{E}\left[ (f_\eta(\bar x^R)-f_\eta(x_\eta^*))\right]+\\
&\tilde \gamma^2\left(\tfrac{1}{\theta}\right)^{R}\textstyle{\sum_{r=1}^{R}}\theta^{r}\left(\tfrac{(2\eta^2\sigma_f^2+2\sigma_h^2)}{KS}+\left(1-\tfrac{S}{N}\right)\tfrac{4}{S}G^2+\tfrac{9\tilde \gamma(L_h+\eta L_f)(2\eta^2\sigma_f^2+2\sigma_h^2)}{K\gamma_g^2}+\tfrac{18(L_h+\eta L_f)\tilde \gamma G^2}{{\gamma_g^2}}\right).
\end{align*}
Rearranging the terms, we obtain 
\begin{align}
&\mathbb{E}\left[\|\RRme{\bar x}^R-x_\eta^*\|^2\right]+\tilde \gamma\left(\tfrac{1}{\theta}\right)^{R}\textstyle{\sum_{\hat r=1}^{R}}\theta^{\hat r}\mathbb{E}\left[ (f_\eta(\bar x^R)-f_\eta(x_\eta^*))\right]\le\left(\tfrac{1}{\theta}\right)^{R}\mathbb{E}\left[\|\RRme{\bar x}^{0}-x_\eta^*\|^2\right]\label{eq:bound for all terms}+\\
&\tilde \gamma^2\left(\tfrac{1}{\theta}\right)^{R}\textstyle{\sum_{r=1}^{R}}\theta^{r}\left(\tfrac{(2\eta^2\sigma_f^2+2\sigma_h^2)}{KS}+\left(1-\tfrac{S}{N}\right)\tfrac{4}{S}G^2\notag+\tfrac{9\tilde \gamma(L_h+\eta L_f)(2\eta^2\sigma_f^2+2\sigma_h^2)}{K\gamma_g^2}+\tfrac{18(L_h+\eta L_f)\tilde \gamma G^2}{{\gamma_g^2}}\right).\notag
\end{align}Dropping \far{the term} $\mathbb{E}\left[\|\RRme{\bar x}^R-x_\eta^*\|^2\right]$ and rearranging the terms, we obtain the result. 
\end{proof}
{\bf E \quad Proof of Proposition~\ref{Proposition:Proposition3}}
\begin{proof}
Invoking Lemmas~\ref{lemma: bound for varepsilon-k} and \ref{lemma:first bound for x-x*-f convex}, we obtain
\begin{align*}
&\mathbb{E}\left[\|\RRme{\bar x}^r-x_\eta^*\|^2\right]\le\mathbb{E}\left[\|\RRme{\bar x}^{r-1}-x_\eta^*\|^2\right]-\tfrac{\tilde \gamma}{3}\mathbb{E}\left[(f_\eta(\RRme{\bar x}^{r-1})-f_\eta(x_\eta^*))\right]\\
&+\tfrac{\tilde \gamma^2(2\eta^2\sigma_f^2+2\sigma_h^2)}{KS}+\left(1-\tfrac{S}{N}\right)\tfrac{4\tilde \gamma^2}{S}G^2+\tfrac{9\tilde \gamma^3(L_h+\eta L_f)(2\eta^2\sigma_f^2+2\sigma_h^2)}{K\gamma_g^2}+\tfrac{18(L_h+\eta L_f)\tilde \gamma^3G^2}{{\gamma_g^2}}\\
&=\mathbb{E}\left[\|\RRme{\bar x}^{r-1}-x_\eta^*\|^2\right]-\tfrac{\tilde \gamma}{3}\mathbb{E}\left[(f_\eta(\RRme{\bar x}^{r-1})-f_\eta(x_\eta^*))\right]+\tilde \gamma^2\left(\tfrac{(2\eta^2\sigma_f^2+2\sigma_h^2)}{KS}+\left(1-\tfrac{S}{N}\right)\tfrac{4}{S}G^2\right.\\
&\left.+\tfrac{9\tilde \gamma(L_h+\eta L_f)(2\eta^2\sigma_f^2+2\sigma_h^2)}{K\gamma_g^2}+\tfrac{18(L_h+\eta L_f)\tilde \gamma G^2}{{\gamma_g^2}}\right).
\end{align*}
Rearranging the terms, we obtain
\begin{align*}
&\mathbb{E}\left[(f_\eta(\RRme{\bar x}^{r-1})-f_\eta(x_\eta^*))\right]\le\tfrac{3}{\tilde \gamma}\mathbb{E}\left[\|\RRme{\bar x}^{r-1}-x_\eta^*\|^2\right]-\tfrac{3}{\tilde \gamma}\mathbb{E}\left[\|\RRme{\bar x}^r-x_\eta^*\|^2\right]\\
&+3\tilde \gamma\left(\tfrac{(2\eta^2\sigma_f^2+2\sigma_h^2)}{KS}+\left(1-\tfrac{S}{N}\right)\tfrac{4}{S}G^2+\tfrac{9\tilde \gamma(L_h+\eta L_f)(2\eta^2\sigma_f^2+2\sigma_h^2)}{K\gamma_g^2}+\tfrac{18(L_h+\eta L_f)\tilde \gamma G^2}{{\gamma_g^2}}\right).
\end{align*}

Summing both sides over $r$, invoking Jensen's inequality and using the definition of $\bar{x}^R$, we obtain
\begin{align*}
& \mathbb{E}\left[(f_\eta(\bar{x}^{R})-f_\eta(x_\eta^*))\right]\le\tfrac{3}{\tilde \gamma}\tfrac{1}{R}\textstyle{\sum_{r=1}^{R}}\mathbb{E}\left[\|\RRme{\bar x}^{r-1}-x_\eta^*\|^2\right]-\tfrac{3}{\tilde \gamma}\tfrac{1}{R}\sum_{r=1}^{R}\mathbb{E}\left[\|\RRme{\bar x}^r-x_\eta^*\|^2\right]\\
&+3\tilde \gamma\left(\tfrac{(2\eta^2\sigma_f^2+2\sigma_h^2)}{KS}+\left(1-\tfrac{S}{N}\right)\tfrac{4}{S}G^2+\tfrac{9\tilde \gamma(L_h+\eta L_f)(2\eta^2\sigma_f^2+2\sigma_h^2)}{K\gamma_g^2}+\tfrac{18(L_h+\eta L_f)\tilde \gamma G^2}{{\gamma_g^2}}\right).
\end{align*}

By expanding the summations and simplifying the terms, we obtain the final result.
\end{proof}
{\bf F \quad Proof of Proposition~\ref{Prop:Prop9}}
\begin{proof}\noindent [Case i] Invoking [Case ii] of Proposition~\ref{Prop:Prop2}, we obtain
\begin{align*}
&\Rme{\mathbb{E}\left[\|{\RRme{\bar x}}^{t}_{\eta_t}-x^{{t}*}\|^2\mid\mathcal{F}_t\right]}\leq \tfrac{2}{\Rme{ \eta_t\tilde \gamma_t}\sum_{r=1}^{R_t}\theta^{r}}\mathbb{E}\left[\|\RRme{\bar x}^{0}-x^{{t}*}\|^2\mid\mathcal{F}_t\right]\\
&+\Rme{\tfrac{2\tilde \gamma_t}{\eta_t}}\left(\tfrac{(2\eta^2\sigma_f^2+2\sigma_h^2)}{KS}+\left(1-\tfrac{S}{N}\right)\tfrac{4}{S}G_{ncvx}^2\right)+\RRRme{\tfrac{2\tilde \gamma_t^2}{\eta_t}}\left(\tfrac{9(L_h+\eta)(2\eta^2\sigma_f^2+2\sigma_h^2)}{K\Rme{\gamma_{g_t}^2}}+\tfrac{18(L_h+\eta )G_{ncvx}^2}{{\Rme{\gamma_{g_t}^2}}}\right)\\
&+{2\|\nabla f(x^{t*})\|}\left(\tfrac{1}{\alpha}\Big( \tfrac{1}{ \Rme{\tilde \gamma_t}\sum_{r=1}^{R_t}\theta^{r}}\mathbb{E}\left[\|\RRme{\bar x}^{0}-x^{t*}\|^2\mid\mathcal{F}_t\right]+\Rme{{\tilde \gamma_t}}\left(\tfrac{(2\eta^2\sigma_f^2+2\sigma_h^2)}{KS}+\left(1-\tfrac{S}{N}\right)\tfrac{4}{S}G_{ncvx}^2\right)\right.\\
&\left.+\Rme{{\tilde \gamma_t^2}}\left(\tfrac{9(L_h+\eta )(2\eta^2\sigma_f^2+2\sigma_h^2)}{K\Rme{\gamma_{g_t}^2}}+\tfrac{18(L_h+\eta )G_{ncvx}^2}{{\Rme{\gamma_{g_t}^2}}}\right)+2\Rme{\eta_t} M\Big)\right)^\frac{1}{\kappa}. 
\end{align*}
Invoking Theorem~\ref{thm:thm2}, for \Rme{$\tilde \gamma_t:= \tfrac{1}{R_t^a}$ and $\eta_t:= \tfrac{p\RRme{\ln}(R_t)}{R_t^b}$}, for some $0\le b<a\le1$, we obtain the result.

\noindent [Case ii] Based on [Case iii] of Proposition~\ref{Prop:Prop2}, we obtain
   
\begin{align*}
&\Rme{\mathbb{E}\left[\|{\RRme{\bar x}}^{t}_{\eta_t}-x^{{t}*}\|^2\mid\mathcal{F}_t\right]}\leq \tfrac{2}{\Rme{ \eta_t\tilde \gamma_t}\sum_{r=1}^{R_t}\theta^{r}}\mathbb{E}\left[\|\RRme{\bar x}^{0}-x^{{t}*}\|^2\mid\mathcal{F}_t\right]\\
&+\Rme{\tfrac{2\tilde \gamma_t}{\eta_t}}\left(\tfrac{(2\eta^2\sigma_f^2+2\sigma_h^2)}{KS}+\left(1-\tfrac{S}{N}\right)\tfrac{4}{S}G_{ncvx}^2\right)+\RRRme{\tfrac{2\tilde \gamma_t^2}{\eta_t}}\left(\tfrac{9(L_h+\eta)(2\eta^2\sigma_f^2+2\sigma_h^2)}{K\Rme{\gamma_{g_t}^2}}+\tfrac{18(L_h+\eta )G_{ncvx}^2}{{\Rme{\gamma_{g_t}^2}}}\right).
\end{align*}
By invoking Theorem~\ref{thm:thm2}, for \Rme{$\tilde \gamma_t:= \tfrac{p\RRme{\ln}(R_t)}{R_t}$ and $\eta_t:= \eta\le{\alpha}/{2\|\nabla F(x^{t*})\|}$}, we derive the result.
\end{proof}

\end{document}